\def\imod#1{\allowbreak\mkern10mu({\operator@font mod}\,\,#1)}
    \declaretheorem{theorem}
    \declaretheorem{corollary}
    \declaretheorem{lemma}
    \declaretheorem{proposition}
    \declaretheorem{observation}
    \declaretheoremstyle[qed=$\square$]{definitionwithend}
    \declaretheorem[style=definitionwithend]{definition}
    \declaretheorem[style=definitionwithend]{assumption}
    \declaretheorem[style=definitionwithend]{example}
    \declaretheorem[style=definitionwithend]{remark} \PassOptionsToPackage{numbers, compress}{natbib}
\definecolor{gold}{rgb}{0.85,0.65,0}
\newcommand{\be}{\begin{eqnarray}}
\newcommand{\ee}[1]{\label{#1}\end{eqnarray}}
\newcommand{\ese}{\end{eqnarray*}}
\newcommand{\bse}{\begin{eqnarray*}}
\def\beq{\begin{equation}}
\def\eeq{\end{equation}}
\def\fnote#1{\footnote}
\newcommand{\abs}[1]{\ensuremath{\left\lvert #1 \right\rvert}}
\newcommand{\by}{\times}
\newcommand{\norm}[1]{\ensuremath{\left\lVert #1 \right\rVert}}
\newcommand{\ip}[1]{\ensuremath{\left\langle #1 \right\rangle}}
\def\C{{\mathbb{C}}}
\def\N{{\mathbb{N}}}
\def\R{{\mathbb{R}}}
\def\S{{\mathbb{S}}}
\def\Se{{\mathbb{S}}}
\def\bS{{\mathbf{S}}}
\def\cA{{\cal A}}
\def\cD{{\cal D}}
\def\cF{{\cal F}}
\def\cG{{\cal G}}
\def\cV{{\cal V}}
\DeclareMathOperator{\Opt}{Opt}
\DeclareMathOperator*{\argmin}{arg\,min}
\DeclareMathOperator*{\argmax}{arg\,max}
\DeclareMathOperator{\Proj}{Proj}
\DeclareMathOperator{\rank}{rank}
\DeclareMathOperator{\Diag}{Diag}
\DeclareMathOperator{\tr}{tr}
\def\dim{\mathop{{\rm dim}\,}}
\DeclareMathOperator{\aff}{aff}
\DeclareMathOperator{\spann}{span}
\DeclareMathOperator{\inter}{int}
\DeclareMathOperator{\conv}{conv}
\DeclareMathOperator{\cone}{cone}
\def\extr{{\mathop{\rm extr}}}
    \let\emptyset\varnothing
    \newcommand{\set}[1]{\left\{#1\right\}}
\newcommand{\mb}{\mathbf}
    \newcommand{\mc}{\mathcal}
    \newcommand{\bb}{\mathbb}
\newcommand{\intset}[1]{\left\llbracket #1 \right\rrbracket}
\renewcommand{\cA}{\bb{A}}
\newcommand{\mathprog}[1]{}
\begin{document}
\title{On the tightness of SDP relaxations of QCQPs
\thanks{This paper is an extended version of work published in IPCO 2020~\cite{wang2020convex}. Sections~\ref{subsec:socp},~\ref{subsec:sharpness},~\ref{sec:tightness}, and~\ref{sec:removing_polyhedral}, along with all of the proofs in this paper, are new material not present in the shorter version.}
}
\author[1]{Alex L.\ Wang}
\author[1]{Fatma K{\i}l{\i}n\c{c}-Karzan}
\affil[1]{Carnegie Mellon University, Pittsburgh, PA, 15213, USA.}
\date{\today}

\maketitle

\begin{abstract}
Quadratically constrained quadratic programs (QCQPs) are a fundamental class of optimization problems well-known to be NP-hard in general. In this paper we study conditions under which the standard semidefinite program (SDP) relaxation of a QCQP is tight. We begin by outlining a general framework for proving such sufficient conditions. Then using this framework, we show that the SDP relaxation is tight whenever the quadratic eigenvalue multiplicity, a parameter capturing the amount of symmetry present in a given problem, is large enough. We present similar sufficient conditions under which the projected epigraph of the SDP gives the convex hull of the epigraph in the original QCQP. Our results also imply new sufficient conditions for the tightness (as well as convex hull exactness) of a second order cone program relaxation of simultaneously diagonalizable QCQPs. \end{abstract}

\section{Introduction}
In this paper we study \textit{quadratically constrained quadratic programs} (QCQPs) of the following form
\begin{align}
\label{eq:qcqp}
\Opt \coloneqq \inf_{x\in\R^N}\set{q_0(x) :\, \begin{array}
	{l}
q_i(x)\leq 0 ,\,\forall i\in\intset{m_I}\\
q_i(x) = 0 ,\,\forall i\in\intset{m_I+1,m_I+m_E}
\end{array}
},
\end{align}
where for every $i\in\intset{0,m_I+m_E}$, the function $q_i:\R^N\to\R$ is a (possibly nonconvex) quadratic function.
We will write $q_i(x) = x^\top A_i x + 2b_i^\top x + c_i$ where $A_i\in \bb S^N$, $b_i\in\R^N$, and $c_i\in\R$.
Here $m_I$ and $m_E$ are the number of inequality constraints and equality constraints respectively. We will assume that $m\coloneqq m_I + m_E\geq 1$.

QCQPs arise naturally in many areas. A non-exhaustive list of applications contains facility location, production planning, pooling, max-cut, max-clique, and certain robust optimization problems (see \cite{phan1982quadratically,bao2011semidefinite,benTal2009robust} and references therein). More generally, any $\set{0,1}$~integer program or polynomial optimization problem may be reformulated as a QCQP~\cite{vandenberghe1996semidefinite}.

Although QCQPs are NP-hard to solve in general, they admit tractable convex relaxations. One natural relaxation is the standard (Shor) semidefinite program (SDP) relaxation \cite{shor1990dual}.
There is a vast literature on approximation guarantees associated with this relaxation \cite{ye1999approximating,nesterov1997quality,benTal2001lectures,megretski2001relaxations}, however, less is known about its exactness.
Recently, a number of exciting results in phase retrieval~\cite{candes2015phase} and clustering~\cite{mixon2016clustering,abbe2015exact,rujeerapaiboon2019size} 
have shown that under various assumptions on the data (or on the parameters in a random data model), the QCQP formulation of the corresponding problem has a tight SDP relaxation. See also \cite{luo2010semidefinite} and references therein for more examples of exactness results regarding SDP relaxations.
In contrast to these results, which address QCQPs arising from particular problems, \citet{burer2019exact} very recently gave some appealing deterministic sufficient conditions under which the standard SDP relaxation of \emph{general} QCQPs is tight. 
In our paper, 
we continue this vein of research for general QCQPs initiated by \citet{burer2019exact}.   
More precisely, we will provide sufficient conditions under which the following two types of results hold:  1) The convex hull of the epigraph of the QCQP is given by the
projection of the epigraph of its SDP relaxation, 2) the optimal objective value of the QCQP is equal to the optimal objective value of its SDP relaxation.
We will refer to these two types of results as ``convex hull results'' and ``SDP tightness results.''

The convex hull results will necessarily require stronger assumptions than the SDP tightness results, however they are also more broadly applicable because such convex hull results are  typically used as building blocks to derive strong convex relaxations for complex problems. 
In fact, the convexification of commonly occurring substructures has been critical in advancing the state-of-the-art computational approaches and software packages for mixed integer linear programs and general nonlinear nonconvex programs~\cite{conforti2014integer,tawarmalani2002convexification}.
For computational purposes, conditions guaranteeing simple convex hull descriptions are particularly favorable.
As we will discuss later, a number of our sufficient conditions will guarantee not only  the desired convex hull results but also that these convex hulls are given by a finite number of easily computable convex quadratic constraints in the original space of variables.

\subsection{Related work}

\subsubsection{Convex hull results}
Convex hull results are well-known for simple QCQPs such as the Trust Region Subproblem (TRS) and the Generalized Trust Region Subproblem (GTRS).
Recall that the TRS is a QCQP with a single strictly convex inequality constraint and that the GTRS is a QCQP with a single (possibly nonconvex) inequality constraint.
A celebrated result due to \citet{fradkov1979s-procedure} implies that the SDP relaxation of the GTRS is tight.
More recently, \citet{hoNguyen2017second} showed that the convex hull of the TRS epigraph is given exactly by the projection of the SDP epigraph. Follow-up work by \citet{wang2019generalized} showed that the (closed) convex hull of the GTRS epigraph is also given exactly by the projection of the SDP epigraph. 
In both cases, the projections of the SDP epigraphs can be described in the original space of variables with at most two convex quadratic inequalities. As a result, the TRS and the GTRS can be solved without explicitly running costly SDP-based algorithms; see \cite{adachi2019eigenvalue,jiang2020linear,jiang2019novel} for other algorithmic ideas to solve the TRS and GTRS.

A different line of research has focused on providing explicit descriptions for the convex hull of the intersection of a single nonconvex quadratic region with convex sets (such as convex quadratic regions, second-order cones~(SOCs), or polytopes) or with another single nonconvex quadratic region.
For example, the convex hull of the intersection of a two-term disjunction, which is a nonconvex quadratic constraint under mild assumptions, and the second-order cone (SOC) or its cross sections has received much attention in mixed integer programming  (see \cite{burer2017how,kilincKarzan2015two,yildiz2015disjunctive} and references therein).
\citet{burer2017how} also studied the convex hull of the intersection of a general nonconvex quadratic region with the SOC or its cross sections.
\citet{yildiran2009convex} gave an explicit description of the convex hull of the intersection of two \emph{strict} quadratic inequalities (note that the resulting set is open) under the mild regularity condition that there exists $\mu\in[0,1]$ such that $(1-\mu)A_0+\mu A_1\succeq 0$. Follow-up work by \citet{modaresi2017convex} gave sufficient conditions guaranteeing a closed version of the same result.  
More recently, \citet{santana2018convex} gave an explicit description of the convex hull of the intersection of a nonconvex quadratic region with a polytope; this convex hull was further shown to be second-order cone representable.
In contrast to these results, we will not limit the number of nonconvex quadratic constraints in our QCQPs. Additionally, the nonconvex sets that we study in this paper will arise as epigraphs of QCQPs. In particular, the epigraph variable will play a special role in our analysis. Therefore, we view our developments as complementary to these results.

The convex hull question has also received attention for certain strengthened relaxations of simple QCQPs \cite{sturm2003cones,burer2013second,burer2014trust,burer2015gentle}.
In this line of work, the standard SDP relaxation is strengthened by additional inequalities derived using the Reformulation-Linearization Technique (RLT). 
\citet{sturm2003cones} showed that the standard SDP relaxation strengthened with an additional SOC constraint derived from RLT gives the convex hull of the epigraph of the TRS with one additional linear inequality. \citet{burer2014trust} extended this result to the case of an arbitrary number of additional linear inequalities as long as the linear constraints do not intersect inside the trust region domain. See \cite{burer2015gentle} for a survey of some results in this area.
Note that in this paper, we restrict our attention to the standard SDP relaxation of QCQPs. Nevertheless, establishing exactness conditions for strengthened SDP relaxations of QCQPs is clearly of great interest and is a direction for future research.

\subsubsection{SDP tightness results}

A number of SDP tightness results are known for variants of the TRS.
\citet{jeyakumar2013trust} showed that the standard SDP relaxation of the TRS with additional linear inequalities is tight under a condition regarding the dimension of 
the minimum eigenvalue\footnote{More precisely, this is the minimum generalized eigenvalue of $A_0$ with respect to the positive definite quadratic form in the constraint.} of $A_0$.
These results were extended in the same paper to handle multiple convex quadratic inequality constraints with the same sufficiently rank-deficient quadratic form (see \cite[Section 6]{jeyakumar2013trust}). \citet{hoNguyen2017second} presented a sufficient condition for tightness of the SDP relaxation that is slightly more general than \cite[Section 6]{jeyakumar2013trust} (see \citet[Section 2.2]{hoNguyen2017second} for a comparison of these conditions). 
A related line of work by \citet{ye2003new} and \citet{beck2006strong} gives sufficient conditions under which the TRS with one additional quadratic inequality constraint admits a tight SDP relaxation.
In contrast to this line of work, our results will address the SDP tightness question in the context of more general QCQPs.

In terms of SDP tightness results, simultaneously diagonalizable QCQPs (SD-QCQPs) have received separate attention \cite{locatelli2015some,locatelli2016exactness,benTal2014hidden,jiang2016simultaneous}.
It is shown in \cite[Theorem 2.1]{locatelli2016exactness} that for SD-QCQPs, the SDP relaxation is equivalent to a SOC program (SOCP) relaxation (see also Proposition~\ref{prop:SOCP_SDP_equivalence}). 
In particular, the KKT-based sufficient conditions that have been presented for SOCP tightness in \cite{benTal2014hidden,locatelli2015some} also guarantee SDP tightness. 
We will present SDP tightness results (Theorems~\ref{thm:sdp_tightness_main}~and~\ref{thm:sdp_tightness_perturbed}) that generalize some of the conditions presented in this line of work. More specifically, our results will not make use of simultaneous diagonalizability assumptions.

A series of articles beginning with \citet{beck2007quadratic} and \citet{beck2012new} has derived SDP tightness results for quadratic matrix programs (QMPs). A QMP is an optimization problem of the form
\begin{align*}
\inf_{X\in\R^{n\by k}}\set{\tr(X^\top \cA_0 X)+2\tr(B_0^\top X) + c_0: \, \begin{array}
	{r}
	\tr(X^\top \cA_i X)+2\tr(B_i^\top X) + c_i \leq 0,\quad\\
	\forall i\in\intset{m_I}\\
	\tr(X^\top \cA_i X)+2\tr(B_i^\top X) + c_i = 0,\quad\\
	\forall i\in\intset{m_I+1,m}
\end{array}},
\end{align*}
where $\cA_i\in \bb S^n$, $B_i\in\R^{n\by k}$, and $c_i \in\R$, and arises often in robust least squares or as a result of Burer-Monteiro reformulations for rank-constrained semidefinite programming \cite{beck2007quadratic,burer2003nonlinear}. 
In this research vein, \citet{beck2007quadratic} showed that a carefully constructed SDP relaxation of QMP is tight whenever $m \leq k$.
Note that by replacing the matrix variable $X\in\R^{n\by k}$ by the vector variable $x\in\R^{nk}$, we may reformulate any QMP as a QCQP of a very particular form. Working backwards, if a QCQP can be reformulated as a QMP with $m\leq k$, then we may apply the SDP relaxation proposed in \cite{beck2007quadratic} to solve it exactly.
We will discuss how such a condition compares with our assumptions in Section~\ref{sec:symmetries}.

In a recent intriguing paper, \citet{burer2019exact} gave a sufficient condition guaranteeing that the standard SDP relaxation of general QCQPs is tight.
We emphasize that in contrast to prior work, the condition proposed in~\cite{burer2019exact} can be applied to \textit{general} QCQPs.
Then, motivated by recent results on exactness guarantees for specific recovery problems with random data and sampling, 
\citet{burer2019exact} also examined a class of random QCQPs and established that if the number of constraints $m$ grows no faster than any fixed polynomial in the number of variables $N$, then their sufficient condition holds with probability approaching one. In particular, the SDP relaxation is tight with probability approaching one.
The SDP tightness results that we present (Theorems~\ref{thm:sdp_tightness_main}~and~\ref{thm:sdp_tightness_perturbed}) will generalize their deterministic sufficient condition \cite[Theorem 1]{burer2019exact}.
As such, their proofs directly imply that our sufficient conditions also hold with probability approaching one in their random data model.

\subsection{Overview and outline of paper}

In contrast to the literature, which has mainly focused on simple QCQPs or QCQPs under certain structural assumptions, in this paper, we will consider general QCQPs and develop sufficient conditions for both the convex hull result and the SDP tightness result. 

We first introduce the epigraph of the QCQP by writing
\begin{align*}
\Opt &= \inf_{(x,t)\in\R^{N+1}} \set{2t :\, (x,t)\in\mc D},
\end{align*}
where $\mc D$ is the epigraph of the QCQP in \eqref{eq:qcqp}, i.e.,
\begin{align}
\label{eq:qcqp_epi}
\mc D \coloneqq \set{(x,t)\in\R^N\times \R :\, \begin{array}
	{l}
	q_0(x) \leq 2t\\
	q_i(x) \leq 0 ,\,\forall i\in\intset{m_I}\\
	q_i(x) = 0,\,\forall i\in\intset{m_I+1,m}
\end{array}}.
\end{align}
As $(x,t)\mapsto 2t$ is linear, we may replace the (potentially nonconvex) epigraph $\mc D$ with its convex hull $\conv(\mc D)$. Then,
\begin{align*}
\Opt &= \inf_{(x,t)\in\R^{N+1}} \set{2t :\, (x,t)\in\conv(\mc D)}.
\end{align*}

A summary of our contributions, along with an outline of the paper, is as follows:
\begin{enumerate}[(a)]
\item In Section~\ref{sec:framework}, we introduce and study the standard SDP relaxation of QCQPs \cite{shor1990dual} along with its optimal value $\Opt_\textup{SDP}$ and projected epigraph $\mc D_\textup{SDP}$. We set up a framework for deriving sufficient conditions for the ``convex hull result,'' $\conv(\mc D) = \mc D_\textup{SDP}$, and the ``SDP tightness result,'' $\Opt=\Opt_\textup{SDP}$. This framework is based on the Lagrangian function $(\gamma,x)\mapsto q_0(x) + \sum_{i=1}^m \gamma_i q_i(x)$ and the eigenvalue structure of a dual object $\Gamma\subseteq\R^m$.
This object $\Gamma$, which consists of the convex Lagrange multipliers, has been extensively studied in the literature (see \cite[Chapter 13.4]{wolkowicz2012handbook} and more recently \cite{sheriff2013convexity}).

\item In Section~\ref{sec:symmetries}, we define an integer parameter $k$, the quadratic eigenvalue multiplicity, that captures the amount of symmetry in a given QCQP. We then give examples where the quadratic eigenvalue multiplicity is large. Specifically, vectorized reformulations of quadratic matrix programs \cite{beck2007quadratic} are such an example.

\item In Section~\ref{sec:conv_hull}, we use our framework to derive sufficient conditions for the convex hull result:~$\conv(\mc D) = \mc D_\textup{SDP}$. Theorem~\ref{thm:conv_hull_symmetries} states that if $\Gamma$ is polyhedral and $k$ is sufficiently large, then $\conv(\mc D) = \mc D_\textup{SDP}$. This theorem  actually follows as a consequence of Theorem~\ref{thm:conv_hull_main}, which replaces the assumption on the quadratic eigenvalue multiplicity with a weaker assumption regarding the dimension of zero eigenspaces related to the matrices $A_i$.
Furthermore, our results in this section establish that if $\Gamma$ is polyhedral, then $\mc D_\textup{SDP}$ is SOC representable; see Remark~\ref{rem:soc_representability}. In particular, when the assumptions of Theorems~\ref{thm:conv_hull_main}~or~\ref{thm:conv_hull_symmetries} hold, we have that $\conv(\mc D) = \mc D_\textup{SDP}$ is SOC representable.
In Section~\ref{subsec:applications_of_conv_hull}, we provide several classes of problems that satisfy the assumptions of these theorems.
In particular, we recover a number of results regarding the TRS~\cite{hoNguyen2017second}, the GTRS~\cite{wang2019generalized}, and the solvability of systems of quadratic equations \cite{barvinok1993feasibility}.
In Section~\ref{subsec:socp}, we compare our assumption that $\Gamma$ is polyhedral with the assumption that the QCQP is an SD-QCQP and show that our assumption is strictly more general.
In Section~\ref{subsec:socp_rel_SD-QCQPs}, we prove that the SOCP relaxation of SD-QCQPs considered by \cite{benTal2014hidden} and \citet{locatelli2015some,locatelli2016exactness} is indeed equivalent to the SDP relaxation. Consequently, this allows us to recover some of the results from \cite{benTal2014hidden,locatelli2015some,locatelli2016exactness} as a consequence of our sufficient conditions (see Section~\ref{subsec:literature_comparison}).
In Section~\ref{subsec:sharpness}, we conclude by showing that the dependence we prove on the quadratic eigenvalue multiplicity $k$ is optimal (Propositions~\ref{prop:aff_dim_sharp_example}~and~\ref{prop:Gamma_sharp_example}).

\item In Section~\ref{sec:tightness}, we use our framework to derive sufficient conditions for the SDP tightness result:~$\Opt =\Opt_\textup{SDP}$. Specifically, Theorems~\ref{thm:sdp_tightness_main}~and~\ref{thm:sdp_tightness_perturbed} give generalizations of the conditions introduced by \citet{locatelli2016exactness} for SDP tightness in a variant of the TRS and \citet{burer2019exact} for SDP tightness in diagonal QCQPs.

\item In Section~\ref{sec:removing_polyhedral}, we discuss the assumption that the dual object $\Gamma$ is polyhedral. In particular, we show that it is possible to recover both a convex hull result (Theorem~\ref{thm:general_Gamma_conv_hull}) and an SDP tightness result (Theorem~\ref{thm:general_Gamma_sdp_tightness}) when this assumption is dropped as long as the quadratic eigenvalue multiplicity $k$ is sufficiently large.
\end{enumerate}

To the best of our knowledge, our results are the first to provide a unified explanation of many of the exactness guarantees present in the literature.
Moreover, our results also provide significant generalizations in a number of settings.
We discuss the relevant comparisons in detail in the corresponding sections as outlined above.
Finally, our results present the first sufficient conditions under which the convex hull of the epigraph of a general QCQP is SOC representable.
 \subsection{Notation}
Let $\R_+$ denote the nonnegative reals. 
For nonnegative integers $m\leq n$ let $\intset{n}\coloneqq\{1,\ldots,n\}$ and $\intset{m,n}\coloneqq \set{m,m+1,\dots,n-1,n}$. 
For $i\in\intset{n}$, let $e_i\in\R^n$ denote the $i$th standard basis vector.
Let $\mb{S}^{n-1}=\set{x\in\R^n:\, \norm{x}=1}$ denote the $n-1$ sphere
and let $\mb{B}(\bar{x},r)=\set{x\in\R^n:\, \norm{x-\bar{x}} \leq r }$ denote the $n$-ball with radius $r$ and center $\bar x$. 
Let $\Se^n$ denote the set of real symmetric $n\by n$ matrices.
For a positive integer $n$, let $I_n$ denote the $n\times n$ identity matrix. 
When the dimension is clear from context, we will simply write $I$ instead of $I_n$. 
Given $A\in \Se^n$, let $\det(A)$, $\tr(A)$, and $\lambda_{\min}(A)$ denote the determinant, trace, and minimum eigenvalue of $A$, respectively.
We write $A\succeq 0$ (respectively, $A\succ 0$) if $A$ is positive semidefinite (respectively, positive definite). 
For $a\in\R^n$, let $\Diag(a)$ denote the diagonal matrix $A\in\R^{n\by n}$ with diagonal entries $A_{i,i} = a_i$. 
Given two matrices $A$ and $B$, let $A \otimes B$ denote their Kronecker product.  
For a set $\mc D \subseteq \R^n$, let $\inter(\mc D)$, $\conv(\mc D)$, $\cone(\mc D)$, $\extr(\mc D)$, $\spann(\mc D)$, $\aff(\mc D)$, $\dim(\mc D)$ and $\aff\dim(\mc D)$ denote the interior, convex hull, conic hull, extreme points, span, affine span, dimension, and affine dimension of $\mc D$, respectively.
For a subspace of $V$ of $\R^n$ and $x\in\R^n$, let $\Proj_{V}x$ denote the projection of $x$ onto $V$. 

\section{A general framework}
\label{sec:framework}

In this section, we introduce a general framework for analyzing the standard Shor SDP relaxation of QCQPs. We will examine how both the objective value and the feasible domain change when moving from a QCQP to its SDP relaxation.

We make an assumption that can be thought of as a primal feasibility and dual strict feasibility assumption. This assumption (or a slightly stronger version of it) is standard and is routinely made in the literature on QCQPs (see for example \cite{benTal1996hidden,ye2003new,beck2007quadratic}).
\begin{assumption}
\label{as:gamma_definite}
	Assume the feasible region of \eqref{eq:qcqp} is nonempty and there exists $\gamma^*\in\R^m$ such that $\gamma^*_i\geq 0$ for all $i\in\intset{m_I}$ and $A_0 + \sum_{i=1}^m \gamma^*_i A_i \succ 0$.\mathprog{\qed}
\end{assumption}
\begin{remark}\label{rem:gamma_star_positive}
By the continuity of $\gamma\mapsto\lambda_{\min}(A_0 + \sum_{i=1}^m \gamma_i A_i)$, we may assume without loss of generality that $\gamma^*_i>0$ for all $i\in\intset{m_I}$.\mathprog{\qed}
\end{remark}

The standard SDP relaxation of~\eqref{eq:qcqp} takes the following form
\begin{align}
\label{eq:shor_sdp}
\Opt_\textup{SDP} \coloneqq \inf_{x\in\R^N, X\in \Se^{N}} \set{\ip{Q_0, Y}:\, \begin{array}
	{l}
	Y\coloneqq \begin{pmatrix}
		1 & x^\top \\ x & X
	\end{pmatrix}\\
	\ip{Q_i, Y}\leq 0 ,\,\forall i\in\intset{m_I}\\
	\ip{Q_i, Y}= 0 ,\,\forall i\in\intset{m_I+1,m}\\
	Y\succeq 0
\end{array}}.
\end{align}
Here, $Q_i\in\Se^{N+1}$ is the matrix $Q_i \coloneqq \left(\begin{smallmatrix}
	c_i & b_i^\top \\
	b_i & A_i
\end{smallmatrix}\right)$.
Let $\mc D_{\textup{SDP}}$ denote the epigraph of \eqref{eq:shor_sdp} projected onto the $(x,t)$ variables, i.e., define
\begin{align}
\label{eq:sdp_epi}
\mc D_{\textup{SDP}} \coloneqq \set{(x,t) \in\R^{N+1}:\, \begin{array}
	{l}
	\exists X\in \Se^N:\\
	Y \coloneqq\begin{pmatrix}
		1 & x^\top\\ x & X
	\end{pmatrix}\\
	\ip{Q_0, Y} \leq 2t\\
	\ip{Q_i, Y}\leq 0 ,\,\forall i\in\intset{m_I}\\
	\ip{Q_i, Y}= 0 ,\,\forall i\in\intset{m_I+1,m}\\
	Y\succeq 0
\end{array}}.
\end{align}

By taking $X = xx^\top$ in both \eqref{eq:shor_sdp} and \eqref{eq:sdp_epi}, we see that $\mc D \subseteq \mc D_\textup{SDP}$ and $\Opt\geq \Opt_\textup{SDP}$. Noting that $\mc D_\textup{SDP}$ is convex (it is the projection of a convex set), we further have that $\conv(\mc D)\subseteq \mc D_{\textup{SDP}}$. The framework that we set up in the remainder of this section allows us to reason about when equality occurs in both relations, i.e., when $\conv(\mc D) = \mc D_\textup{SDP}$ and/or $\Opt = \Opt_\textup{SDP}$. We will refer to these two types of result as ``convex hull results'' and ``SDP tightness results.''

\subsection{Rewriting the SDP in terms of a dual object}

For $\gamma\in\R^m$, define
\begin{align*}
A(\gamma)\coloneqq A_0 + \sum_{i=1}^m \gamma_i A_i,
\quad
b(\gamma)\coloneqq b_0 + \sum_{i=1}^m \gamma_i b_i,
\quad
c(\gamma)\coloneqq c_0 + \sum_{i=1}^m \gamma_i c_i,
\quad
q(\gamma,x)\coloneqq q_0(x) + \sum_{i=1}^m \gamma_i q_i(x).
\end{align*}
It is easy to verify that $q(\gamma,x) = x^\top A(\gamma)x + 2b(\gamma)^\top x + c(\gamma)$.
Our framework for analyzing \eqref{eq:shor_sdp} is based on the dual object
\begin{align*}
\Gamma\coloneqq \set{\gamma\in\R^m:\, \begin{array}
	{l}
	A(\gamma)\succeq 0\\
	\gamma_i\geq 0 ,\,\forall i\in\intset{m_I}
\end{array}}.
\end{align*}

We begin by rewriting both $\mc D_\textup{SDP}$ and $\Opt_\textup{SDP}$ to highlight the role played by $\Gamma$. 

\begin{lemma}
\label{lemma:sdp_in_terms_of_Gamma}
Suppose Assumption~\ref{as:gamma_definite} holds. Then
\begin{align*}
\mc D_\textup{SDP}  = \set{(x,t) :~ \sup_{\gamma\in\Gamma} q(\gamma,x)\leq 2t}
\quad\text{and}\quad
\Opt_\textup{SDP} &= \min_{x\in\R^N} \sup_{\gamma\in\Gamma} q(\gamma, x).
\end{align*}
\end{lemma}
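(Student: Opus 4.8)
The plan is to fix $x\in\R^N$, reduce membership of $(x,t)$ in $\mc D_\textup{SDP}$ to the feasibility of a semidefinite system in a single positive semidefinite matrix variable, and then analyze that system via conic duality. Since the $(1,1)$ entry of $Y$ equals $1>0$, a Schur complement argument shows that $Y=\left(\begin{smallmatrix}1&x^\top\\ x&X\end{smallmatrix}\right)\succeq0$ holds for some $X\in\Se^N$ if and only if $X=xx^\top+Z$ for some $Z\succeq0$; combining this with the identity $\ip{Q_i,Y}=q_i(x)+\ip{A_i,Z}$ shows that $(x,t)\in\mc D_\textup{SDP}$ if and only if there exists $Z\succeq0$ with
\begin{align*}
q_0(x)+\ip{A_0,Z}&\le 2t,\\
q_i(x)+\ip{A_i,Z}&\le 0\qquad\forall i\in\intset{m_I},\\
q_i(x)+\ip{A_i,Z}&= 0\qquad\forall i\in\intset{m_I+1,m}.
\end{align*}
So it suffices to show that this system is feasible in $Z\succeq0$ if and only if $\sup_{\gamma\in\Gamma}q(\gamma,x)\le 2t$.

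For the ``only if'' direction, given such a $Z$ and any $\gamma\in\Gamma$, I would aggregate the three families of constraints with weight $1$ on the first and weight $\gamma_i$ on the $i$th; this is legitimate because $\gamma_i\ge0$ for $i\in\intset{m_I}$ and the remaining constraints are equalities. By the definitions of $A(\gamma)$ and $q(\gamma,\cdot)$, the aggregate reads $q(\gamma,x)+\ip{A(\gamma),Z}\le 2t$, and since $A(\gamma)\succeq0$ and $Z\succeq0$ give $\ip{A(\gamma),Z}\ge0$, we obtain $q(\gamma,x)\le 2t$; taking the supremum over $\gamma\in\Gamma$ finishes this direction.

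The ``if'' direction is the crux, and is where Assumption~\ref{as:gamma_definite} (through Remark~\ref{rem:gamma_star_positive}) enters. Assume $\sup_{\gamma\in\Gamma}q(\gamma,x)\le 2t$ and consider the conic program
\begin{align*}
(\mathrm{P}_x):\qquad \inf_{Z\succeq0}\set{q_0(x)+\ip{A_0,Z}:\ \begin{array}{l}q_i(x)+\ip{A_i,Z}\le 0\ \ \forall i\in\intset{m_I}\\ q_i(x)+\ip{A_i,Z}= 0\ \ \forall i\in\intset{m_I+1,m}\end{array}}.
\end{align*}
Dualizing its $m$ affine constraints (with multipliers $\gamma$, taking $\gamma_i\ge0$ for $i\in\intset{m_I}$) gives the Lagrangian dual value $\sup_\gamma\bigl(q(\gamma,x)+\inf_{Z\succeq0}\ip{A(\gamma),Z}\bigr)$; since $\inf_{Z\succeq0}\ip{A(\gamma),Z}$ equals $0$ when $A(\gamma)\succeq0$ and $-\infty$ otherwise, this dual value is exactly $\sup_{\gamma\in\Gamma}q(\gamma,x)$. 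By Remark~\ref{rem:gamma_star_positive} there is $\gamma^*$ with $A(\gamma^*)\succ0$ and $\gamma^*_i>0$ for all $i\in\intset{m_I}$, i.e., a strictly feasible (Slater) point for this dual program, whose optimal value moreover lies in the finite interval $[\,q(\gamma^*,x),\,2t\,]$. By the conic duality theorem in its Slater/strict-feasibility form, strict feasibility of the dual together with finiteness of its value implies that $(\mathrm{P}_x)$ attains its optimum and $\mathrm{val}(\mathrm{P}_x)=\sup_{\gamma\in\Gamma}q(\gamma,x)\le 2t$; any minimizer $Z^*$ of $(\mathrm{P}_x)$ then satisfies the full system displayed above, so $(x,t)\in\mc D_\textup{SDP}$. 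Verifying carefully that the Slater hypothesis holds for the dual cone program exactly in the form required, and that no residual duality gap or unattainment issue arises, is the main technical point.

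Finally, the identity for $\Opt_\textup{SDP}$ follows from the set description just established: $\Opt_\textup{SDP}=\inf\set{2t:(x,t)\in\mc D_\textup{SDP}}=\inf_{x\in\R^N}\sup_{\gamma\in\Gamma}q(\gamma,x)$, where I use $\sup_{\gamma\in\Gamma}q(\gamma,x)\ge q(\gamma^*,x)>-\infty$ for every $x$ so that the inner optimization over $t$ behaves as expected. To upgrade the infimum over $x$ to a minimum, observe that $\phi(x):=\sup_{\gamma\in\Gamma}q(\gamma,x)$ is convex (a pointwise supremum of the quadratics $q(\gamma,\cdot)$, each convex since $A(\gamma)\succeq0$ for $\gamma\in\Gamma$), lower semicontinuous (a supremum of continuous functions), and coercive (it dominates $q(\gamma^*,\cdot)$, which tends to $+\infty$ as $\norm x\to\infty$ because $A(\gamma^*)\succ0$); such a function attains its minimum over $\R^N$.
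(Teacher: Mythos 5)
Your proposal is correct, and it reaches the conclusion by a genuinely different route than the paper. The paper fixes $\hat x$, keeps the linear constraints pinning the first row and column of $Y$, and dualizes the resulting SDP \eqref{eq:conv_hull_sdp} to obtain a dual \eqref{eq:conv_hull_sdp_dual} with extra variables $(t,y)$; identifying that dual's value with $\sup_{\gamma\in\Gamma}q(\gamma,\hat x)$ then requires strong conic duality (strict feasibility via $\gamma^*$) \emph{plus} a coercivity argument and the Ekeland--Temam minimax theorem to swap $\sup$ and $\inf$, with a separate case analysis for the unbounded dual. You instead eliminate those linear constraints at the outset via the Schur complement substitution $X=xx^\top+Z$, $Z\succeq 0$, so that membership in $\mc D_\textup{SDP}$ becomes feasibility of a conic system in $Z$ alone; the Lagrangian dual of your program $(\mathrm{P}_x)$ is then \emph{exactly} $\sup_{\gamma\in\Gamma}q(\gamma,x)$, one inclusion is an elementary nonnegative aggregation using $\ip{A(\gamma),Z}\geq 0$, and the other is a single application of the conic duality theorem (strict feasibility of the $\gamma$-problem being precisely what Remark~\ref{rem:gamma_star_positive} supplies, and finiteness coming from the assumed bound $2t$). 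The attainment of the minimum over $x$ is then handled separately by convexity, lower semicontinuity, and coercivity of $\phi(x)=\sup_{\gamma\in\Gamma}q(\gamma,x)$, rather than falling out of the minimax swap. What each approach buys: the paper's argument works directly with the stated SDP and its formal dual, at the cost of the minimax machinery and the bounded/unbounded case split; yours is shorter and more elementary, needing only standard conic duality and a sublevel-set compactness argument. The only cosmetic loose end is properness of $\phi$ (needed so that ``attains its minimum'' is nonvacuous), but this is immediate from Assumption~\ref{as:gamma_definite}: any QCQP-feasible $\bar x$ satisfies $\phi(\bar x)\leq q_0(\bar x)<\infty$.
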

We note that the second identity in Lemma~\ref{lemma:sdp_in_terms_of_Gamma} is well-known and was first recorded by \citet{fujie1997semidefinite}.
\begin{proof}
The second identity follows immediately from the first identity, thus it suffices to prove only the former.

Fix $\hat x$ and consider the SDP
\begin{align}
\label{eq:conv_hull_sdp}
\inf_{X\in\Se^N}\set{\ip{Q_0, Y} :\, \begin{array}
	{l}
	Y\coloneqq \begin{pmatrix}
		1 & \hat x^\top \\
		\hat x & X
	\end{pmatrix}\\
	\ip{Q_i,Y} \leq 0 ,\,\forall i\in\intset{m_I}\\
	\ip{Q_i,Y} = 0 ,\,\forall i\in\intset{m_I+1,m}\\
	Y\succeq 0
\end{array}}.
\end{align}
Comparing programs \eqref{eq:sdp_epi} and \eqref{eq:conv_hull_sdp}, we see that $(\hat x,\hat t)\in\mc D_\textup{SDP}$ if and only if the value $2\hat t$ is achieved in \eqref{eq:conv_hull_sdp}. The dual SDP to \eqref{eq:conv_hull_sdp} is given by
\begin{align}
\label{eq:conv_hull_sdp_dual}
\sup_{\gamma\in\R^m, t\in\R, y\in\R^N}\set{2t+2\ip{y,\hat x}:\, \begin{array}
	{l}
	\begin{pmatrix}
		c(\gamma) - 2t & b(\gamma)^\top - y^\top\\
		b(\gamma) - y & A(\gamma)
	\end{pmatrix}\succeq 0\\
	\gamma_i \geq 0 ,\,\forall i\in\intset{m_I}
\end{array}}.
\end{align}
Note that the first constraint in the dual SDP can only be satisfied if $A(\gamma)\succeq 0$. We may thus rewrite
\begin{align*}
\eqref{eq:conv_hull_sdp_dual} &= \sup_{\gamma\in\R^m, t\in\R, y\in\R^N} \set{2t+2\ip{y,\hat x} :\, \begin{array}
	{l}
	\begin{pmatrix}
		1\\x
	\end{pmatrix}^\top \begin{pmatrix}
		c(\gamma) - 2t & b(\gamma)^\top - y^\top \\
		b(\gamma)-y & A(\gamma)
	\end{pmatrix}\begin{pmatrix}
		1\\x
	\end{pmatrix}\geq 0,\,\forall x\in\R^N\\
	\gamma\in\Gamma
\end{array}}\\
&= \sup_{\gamma\in\R^m, t\in\R, y\in\R^N} \set{2t+2\ip{y,\hat x} :\, \begin{array}
	{l}
	q(\gamma,x)- 2\ip{y,x} \geq 2t  ,\,\forall x\in\R^N\\
	\gamma\in\Gamma
\end{array}}\\
&= \sup_{\gamma\in\Gamma, y\in\R^N} \inf_{x\in\R^N} q(\gamma,x) +2\ip{y,\hat x - x}.
\end{align*}

We first consider the case that the value of the dual SDP~\eqref{eq:conv_hull_sdp_dual} is bounded. Assumption~\ref{as:gamma_definite} and Remark~\ref{rem:gamma_star_positive} imply that \eqref{eq:conv_hull_sdp_dual} is strictly feasible. Then by strong conic duality, the primal SDP~\eqref{eq:conv_hull_sdp} achieves its optimal value and in particular must be feasible.
Let $\gamma^*$ be such that $A(\gamma^*)\succ 0$ (this exists by Assumption~\ref{as:gamma_definite}) and let $y^* = 0$. Then,
\begin{align*}
\lim_{\norm{x}\to\infty} q(\gamma^*,x) + 2\ip{y^*, \hat x - x} = \lim_{\norm{x}\to\infty} q(\gamma^*,x) = \infty.
\end{align*}
In other words, $x\mapsto q(\gamma^*,x) + 2 \ip{y^*, \hat x - x}$ is coercive and we may apply the Minimax Theorem \cite[Chapter VI, Proposition 2.3]{ekeland1999convex} to get
\begin{align*}
\eqref{eq:conv_hull_sdp}= \eqref{eq:conv_hull_sdp_dual} &= \min_{x\in\R^N}\sup_{\gamma\in\Gamma, y\in\R^N}  q(\gamma,x) +2\ip{y,\hat x - x} = \sup_{\gamma\in\Gamma} q(\gamma,\hat x).
\end{align*}
The last equation follows as for any $x\neq\hat x$, the supremum may take $y$ arbitrarily large in the direction of $\hat x - x$. We conclude that if the value of the dual SDP~\eqref{eq:conv_hull_sdp_dual} is bounded, then
\begin{align*}
(\hat x,\hat t)\in \mc D_\textup{SDP} \quad\Longleftrightarrow\quad \sup_{\gamma\in\Gamma} q(\gamma,\hat x) \leq 2\hat t.
\end{align*}

Now suppose the value of the dual SDP~\eqref{eq:conv_hull_sdp_dual} is unbounded. In this case $(\hat x,\hat t)\notin \mc D_\textup{SDP}$ for any value of $\hat t$. It remains to observe that
\begin{align*}
\sup_{\gamma\in\Gamma} q(\gamma,\hat x) \geq \sup_{\gamma\in\Gamma, y\in\R^N} \inf_{x\in\R^N} q(\gamma, x) + 2\ip{y, \hat x - x}=\infty.
\end{align*}
In particular, $(\hat x,\hat t)$ does not satisfy $\sup_{\gamma\in\Gamma} q(\gamma,\hat x)\leq 2\hat t$ for any value of $\hat t$. We conclude that if the value of the dual SDP \eqref{eq:conv_hull_sdp_dual} is unbounded, then for all $\hat t$,
\begin{align*}
(\hat x, \hat t)\notin\mc D_\textup{SDP} &\quad\text{and}\quad \sup_{\gamma\in\Gamma}q(\gamma,\hat x) \not\leq 2\hat t.\qedhere
\end{align*}
\end{proof}

\begin{remark}
\label{rem:affine_transformation}
It is not hard to show\footnote{A short proof follows from Lemma~\ref{lemma:sdp_in_terms_of_Gamma}.} that the questions $\conv(\mc D)\stackrel{?}{=} \mc D_\textup{SDP}$ and $\Opt\stackrel{?}{=} \Opt_\textup{SDP}$ are invariant under invertible affine transformations of the $x$-space.
In particular, the sufficient conditions that we will present in this paper only need to hold after some invertible affine transformation. In this sense, the SDP relaxation will ``find'' structure in a given QCQP even if it is ``hidden'' by an affine transformation.\mathprog{\qed}
\end{remark}

\subsection{The eigenvalue structure of $\Gamma$}
We will make a technical assumption on $\Gamma$ and $q(\gamma,x)$ in the remainder of our framework.

\begin{assumption}
\label{as:cF_well_defined}
	Assume that for all $\hat x\in\R^n$, if $\sup_{\gamma\in\Gamma}q(\gamma,\hat x)$ is finite then its maximum value is achieved in $\Gamma$.\mathprog{\qed}
\end{assumption}
\begin{remark}\label{rem:F(x)_well_defined}
As $\gamma\mapsto q(\gamma,\hat x)$ is linear in $\gamma$ and $\Gamma$ is closed, Assumption~\ref{as:cF_well_defined} holds for example whenever $\Gamma$ is polyhedral or bounded.\mathprog{\qed}
\end{remark}

Under Assumption~\ref{as:cF_well_defined}, the following definition is well-defined.
\begin{definition}
Suppose Assumption~\ref{as:cF_well_defined} holds.
For any $\hat x\in\R^N$ such that $\sup_{\gamma\in\Gamma} q(\gamma,\hat x)$ is finite, define $\mc F(\hat x)$ to be the face of $\Gamma$ achieving $\sup_{\gamma\in\Gamma}q(\gamma,\hat x)$, i.e.,
\begin{align*}
\cF(\hat x) &\coloneqq \argmax_{\gamma\in\Gamma}q(\gamma,x).\qedhere
\end{align*}
\end{definition}
\begin{definition}\label{def:definiteFace}
Let $\mc F$ be a face of $\Gamma$. We say that $\mc F$ is a \textit{definite face} if there exists $\gamma\in \mc F$ such that $A(\gamma)\succ 0$. Otherwise, we say that $\mc F$ is a \textit{semidefinite face} and let $\mc V(\mc F)$ denote the shared zero eigenspace of $\mc F$, i.e.,
\begin{align*}
\mc V(\mc F) &\coloneqq \set{v\in\R^N:\, A(\gamma)v = 0,\,\forall \gamma\in\mc F}.\qedhere
\end{align*}
\end{definition}
Note that under Definition~\ref{def:definiteFace}, each face of $\Gamma$ is \textit{either} a definite face or a semidefinite face. Specifically, a definite face is not also a semidefinite face.

The following lemma shows that $\cV(\cF)$, which \textit{a priori} may be the trivial subspace $\set{0}$, in fact contains nonzero elements when $\cF$ is a semidefinite face.

\begin{lemma}
\label{lem:semidefinite_shares_zero}
Let $\mc F$ be a semidefinite face of $\Gamma$. Then $\mc V(\mc F)\cap \mb S^{N-1}$ is nonempty.
\end{lemma}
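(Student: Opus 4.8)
The statement asserts that a semidefinite face $\mc F$ of $\Gamma$ has a nontrivial shared zero eigenspace $\mc V(\mc F)$. The plan is to argue by contradiction: suppose $\mc V(\mc F) = \set{0}$, i.e., there is no nonzero vector $v$ with $A(\gamma)v = 0$ for all $\gamma \in \mc F$. I would first observe that since $\mc F$ is a face of the spectrahedral set $\Gamma$, it has nonempty relative interior; pick $\bar\gamma \in \relint(\mc F)$. A standard fact about spectrahedra is that the kernel of $A(\gamma)$ is constant on $\relint(\mc F)$ and is contained in the kernel of $A(\gamma')$ for every $\gamma' \in \mc F$ — indeed for any two points of a face, convex combinations stay in the face and $A(\cdot)$ is affine, so $\ker A(\bar\gamma) \subseteq \ker A(\gamma')$ whenever $\bar\gamma$ is relatively interior. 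Hence $\mc V(\mc F) = \ker A(\bar\gamma)$. So the assumption $\mc V(\mc F) = \set{0}$ would force $A(\bar\gamma) \succ 0$, making $\mc F$ a definite face — contradicting the hypothesis that $\mc F$ is semidefinite.

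The key steps, in order: (i) invoke that a nonempty face $\mc F$ of the closed convex set $\Gamma$ has a point $\bar\gamma$ in its relative interior; (ii) show $\ker A(\bar\gamma) = \bigcap_{\gamma \in \mc F} \ker A(\gamma) = \mc V(\mc F)$, using that $A$ is affine and that any $\gamma' \in \mc F$ can be written as $\bar\gamma + \epsilon(\bar\gamma - \gamma'')$ for some $\gamma'' \in \mc F$ and small $\epsilon > 0$, together with the fact that for positive semidefinite matrices $M, M'$ one has $\ker(M + M') = \ker M \cap \ker M'$; (iii) conclude that $\mc V(\mc F) = \set{0}$ implies $A(\bar\gamma)$ has trivial kernel, hence $A(\bar\gamma) \succ 0$ since $A(\bar\gamma) \succeq 0$ by definition of $\Gamma$; (iv) note this contradicts $\mc F$ being a semidefinite face. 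Therefore $\mc V(\mc F) \neq \set{0}$, and scaling any nonzero element to unit norm shows $\mc V(\mc F) \cap \mb S^{N-1} \neq \emptyset$.

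The main obstacle — really the only nonroutine point — is step (ii), establishing that the shared zero eigenspace of the whole face equals the zero eigenspace of a single relative-interior point. This is where one needs the geometry of faces of spectrahedra: a relative-interior point "sees" the smallest kernel, and moving toward any other point of the face can only enlarge (not shrink) the kernel. I would make this precise by taking $\bar\gamma \in \relint(\mc F)$ and an arbitrary $\gamma' \in \mc F$; by definition of relative interior, $\gamma'' \coloneqq \bar\gamma + \epsilon(\bar\gamma - \gamma') \in \mc F$ for some $\epsilon > 0$, so $\bar\gamma = \tfrac{1}{1+\epsilon}\gamma'' + \tfrac{\epsilon}{1+\epsilon}\gamma'$ and $A(\bar\gamma) = \tfrac{1}{1+\epsilon}A(\gamma'') + \tfrac{\epsilon}{1+\epsilon}A(\gamma')$ is a convex combination of two positive semidefinite matrices; hence $\ker A(\bar\gamma) = \ker A(\gamma'') \cap \ker A(\gamma') \subseteq \ker A(\gamma')$. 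Intersecting over all $\gamma' \in \mc F$ gives $\ker A(\bar\gamma) \subseteq \mc V(\mc F)$, and the reverse inclusion is immediate from $\bar\gamma \in \mc F$. Everything else is bookkeeping.
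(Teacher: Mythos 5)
Your proof is correct and rests on the same key mechanism as the paper's: pick $\bar\gamma\in\relint(\mc F)$, use the line-extension property $\bar\gamma+\epsilon(\bar\gamma-\gamma')\in\mc F$ together with positive semidefiniteness to show that the zero eigenspace of $A(\bar\gamma)$ persists over the whole face. The only difference is packaging---the paper directly exhibits a unit vector $v$ with $v^\top A(\bar\gamma)v=0$ and shows $v\in\mc V(\mc F)$ via linearity of $\gamma\mapsto v^\top A(\gamma)v$, while you phrase it as $\mc V(\mc F)=\ker A(\bar\gamma)$ and argue by contradiction---which is essentially the same argument.
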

\begin{proof}
Let $\hat\gamma$ denote a vector in the relative interior of $\cF$. By the assumption that $\cF$ is a semidefinite face, there exists $v\in\bS^{N-1}$ such that $v^\top A(\hat\gamma)v = 0$. We claim that $v\in\cV(\cF)$. 
As $A(\gamma)\succeq 0$ for all $\gamma\in\cF$, it suffices to show that $v^\top A(\gamma)v\leq 0$ for all $\gamma\in\cF$.
Suppose $v^\top A(\gamma')v>0$ for some $\gamma'\in\cF$. Then, as $\hat\gamma$ is in the relative interior of $\cF$, there exists $\epsilon>0$ small enough such that $\gamma_\epsilon\coloneqq \hat\gamma + \epsilon(\hat\gamma - \gamma')\in\cF$. Finally, by the linearity of $\gamma\mapsto v^\top A(\gamma)v$ in $\gamma$, we conclude that $v^\top A(\gamma_\epsilon)v <0$, a contradiction.
\end{proof}

\subsection{The framework}
Our framework for analyzing the SDP relaxation consists of an ``easy part'' and a ``hard part.''
The former only requires Assumptions~\ref{as:gamma_definite}~and~\ref{as:cF_well_defined} to hold while the latter may require additional assumptions.
We detail the ``easy part'' in the remainder of this section.

Begin by making the following observations.
\begin{lemma}
\label{lem:F_contains_definite}
Suppose Assumptions~\ref{as:gamma_definite}~and~\ref{as:cF_well_defined} hold and let $(\hat x, \hat t) \in\mc D_\textup{SDP}$. If $\mc F(\hat x)$ is a definite face of $\Gamma$, then $(\hat x, \hat t)\in\mc D$.
\end{lemma}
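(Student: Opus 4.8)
The plan is to read $(\hat x,\hat t)\in\mc D_\textup{SDP}$ through the lens of Lemma~\ref{lemma:sdp_in_terms_of_Gamma}. That lemma gives $\sup_{\gamma\in\Gamma} q(\gamma,\hat x)\le 2\hat t<\infty$, so by Assumption~\ref{as:cF_well_defined} the supremum is attained and $\cF(\hat x)=\argmax_{\gamma\in\Gamma}q(\gamma,\hat x)$ is a nonempty face. Since $\cF(\hat x)$ is a definite face, I fix once and for all a multiplier $\hat\gamma\in\cF(\hat x)$ with $A(\hat\gamma)\succ 0$. The point of the argument is that this $\hat\gamma$ maximizes the \emph{linear} functional $\gamma\mapsto q(\gamma,\hat x)$ over $\Gamma$, and the positive definiteness of $A(\hat\gamma)$ means that locally the only active constraints of $\Gamma$ at $\hat\gamma$ are the sign constraints $\gamma_i\ge 0$, $i\in\intset{m_I}$.

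The crux is then a first-order (perturbation) argument at $\hat\gamma$. For any index $i$ and all sufficiently small $\epsilon$, $A(\hat\gamma\pm\epsilon e_i)=A(\hat\gamma)\pm\epsilon A_i\succeq 0$, so the only obstruction to perturbing $\hat\gamma$ by $\pm\epsilon e_i$ inside $\Gamma$ is the sign constraint on coordinate $i$ (when $i\in\intset{m_I}$). Using that the directional derivative of $\gamma\mapsto q(\gamma,\hat x)$ in direction $e_i$ equals $q_i(\hat x)$, optimality of $\hat\gamma$ forces: $q_i(\hat x)=0$ for every $i\in\intset{m_I+1,m}$ (both perturbations admissible); $q_i(\hat x)=0$ for every $i\in\intset{m_I}$ with $\hat\gamma_i>0$ (again both admissible); and $q_i(\hat x)\le 0$ for every $i\in\intset{m_I}$ with $\hat\gamma_i=0$ (only the $+\epsilon e_i$ perturbation admissible). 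Equivalently, $(q_i(\hat x))_{i=1}^m$ lies in the normal cone of $\Gamma$ at $\hat\gamma$, which is generated by $\{-e_i:\,i\in\intset{m_I},\ \hat\gamma_i=0\}$. In particular $q_i(\hat x)\le 0$ for all $i\in\intset{m_I}$, $q_i(\hat x)=0$ for all $i\in\intset{m_I+1,m}$, and $\hat\gamma_i q_i(\hat x)=0$ for every $i$ (complementary slackness).

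Finally I close the epigraph inequality: $q_0(\hat x)=q(\hat\gamma,\hat x)-\sum_{i=1}^m \hat\gamma_i q_i(\hat x)=q(\hat\gamma,\hat x)=\sup_{\gamma\in\Gamma}q(\gamma,\hat x)\le 2\hat t$, where the middle equality is the complementary slackness just established. Combined with the sign and equality conditions on the $q_i(\hat x)$, this is exactly $(\hat x,\hat t)\in\mc D$. The one place requiring care is the perturbation step: one must justify that $A(\hat\gamma)\succ 0$ renders every positive semidefiniteness constraint locally inactive, so that near $\hat\gamma$ the set $\Gamma$ coincides with the polyhedral cone cut out by the sign constraints alone and the elementary $\pm\epsilon e_i$ feasibility reasoning is valid; the remainder is routine bookkeeping.
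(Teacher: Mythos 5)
Your proof is correct and takes essentially the same route as the paper: both arguments perturb the positive definite multiplier $\hat\gamma\in\mc F(\hat x)$ by $\pm\epsilon e_i$, using $A(\hat\gamma)\succ 0$ to keep the semidefiniteness constraint locally inactive, thereby reading off $q_i(\hat x)\le 0$ for the inequality constraints and $q_i(\hat x)=0$ for the equality constraints from optimality of $\hat\gamma$ in the linear problem $\sup_{\gamma\in\Gamma}q(\gamma,\hat x)$. The only (cosmetic) difference is how the bound $q_0(\hat x)\le 2\hat t$ is closed: you extract coordinate-wise complementary slackness $\hat\gamma_i q_i(\hat x)=0$ from two-sided perturbations in coordinates with $\hat\gamma_i>0$, while the paper scales the multiplier to $(1+\epsilon)\gamma^*$ to get $0\in\aff(\mc F)$ and conclude $q_0(\hat x)=q(\gamma^*,\hat x)$, which is the same fact in summed form.
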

\begin{proof}
Let $\mc F\coloneqq \mc F(\hat x)$. Because $\mc F$ is a definite face, there exists $\gamma^*\in\mc F$ such that $A(\gamma^*)\succ 0$. We verify that $(\hat x,\hat t)$ satisfies each of the constraints in \eqref{eq:qcqp_epi}.

\begin{enumerate}
	\item By continuity, there exists $\epsilon>0$ such that
$A((1+\epsilon)\gamma^*)\succ 0$.
We claim that $(1+\epsilon)\gamma^*\in\mc F$.
Indeed, $A(\gamma^*)$ and $A((1+\epsilon)\gamma^*)$ are both positive definite, thus the constraint $A(\gamma)\succeq 0$ is inactive at both $\gamma^*$ and $(1+\epsilon)\gamma^*$. Furthermore, for all $i\in\intset{m_I}$, the constraint $\gamma_i\geq 0$ is active at $\gamma^*$ if and only if it is active at $(1+\epsilon)\gamma^*$.
We conclude that $(1+\epsilon)\gamma^*\in\mc F$ and in particular $0\in\aff(\mc F)$. This implies
\begin{align*}
q_0(\hat x) &= q(0,\hat x) = q(\gamma^*,\hat x)\leq 2\hat t.
\end{align*}
	\item Let $i\in\intset{m_I}$. By continuity
	there exists $\epsilon>0$ such that $A(\gamma^* + \epsilon e_i)\succ 0$. Thus, $\gamma^* + \epsilon e_i\in\Gamma$. In particular, since $q(\gamma,\hat x)$ is maximized on $\mc F$ in $\Gamma$, we have that
\begin{align*}
q_i(\hat x) =\frac{q(\gamma^* + \epsilon e_i,\hat x) - q(\gamma^*, \hat x)}{\epsilon} \leq 0.
\end{align*}
\item Let $i\in\intset{m_I+1,m}$. By continuity, there exists $\epsilon>0$ such that
$A(\gamma^* \pm \epsilon e_i)\succ 0$.
Thus, $\gamma^* \pm \epsilon e_i\in\Gamma$. In particular, since $q(\gamma,\hat x)$ is maximized on $\mc F$ in $\Gamma$, we have that
\begin{align*}
q_i(\hat x) =\frac{q(\gamma^* + \epsilon e_i,\hat x) - q(\gamma^*, \hat x)}{\epsilon} \leq 0.
\end{align*}
Repeating this calculation with $-\epsilon$ gives $q_i(\hat x)\geq 0$. We deduce that $q_i(\hat x) = 0$.\qedhere
\end{enumerate}
\end{proof}

\begin{observation}
\label{obs:aff_dim_m_is_definite}
Suppose Assumption~\ref{as:gamma_definite} holds, and let $\mc F$ be a face of $\Gamma$. If $\aff\dim(\mc F) = m$, then $\mc F$ is definite.
\end{observation}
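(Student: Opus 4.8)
The plan is to reduce everything to the observation that a face of $\Gamma$ with the same dimension as $\Gamma$ must coincide with $\Gamma$, after which Assumption~\ref{as:gamma_definite} finishes the argument immediately. First I would note that since $\mc F \subseteq \Gamma \subseteq \R^m$, the inequalities $m = \aff\dim(\mc F) \le \aff\dim(\Gamma) \le m$ collapse, so $\Gamma$ is full-dimensional and $\aff(\mc F) = \aff(\Gamma) = \R^m$. In particular $\inter(\mc F)$ is nonempty; fix $\hat\gamma \in \inter(\mc F)$. A small ball around $\hat\gamma$ lies in $\mc F \subseteq \Gamma$, so $\hat\gamma \in \inter(\Gamma)$ as well.

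Next I would invoke the defining property of a face. Let $\gamma \in \Gamma$ be arbitrary. Since $\hat\gamma$ is in the interior of the convex set $\Gamma$, the segment from $\gamma$ to $\hat\gamma$ can be extended slightly beyond $\hat\gamma$ while remaining in $\Gamma$; that is, there exist $\gamma' \in \Gamma$ and $\lambda \in (0,1)$ with $\hat\gamma = \lambda\gamma + (1-\lambda)\gamma'$. Because $\mc F$ is a face of $\Gamma$ and $\hat\gamma \in \mc F$, both endpoints $\gamma$ and $\gamma'$ must lie in $\mc F$; in particular $\gamma \in \mc F$. As $\gamma$ was an arbitrary element of $\Gamma$, this shows $\Gamma \subseteq \mc F$, and hence $\mc F = \Gamma$.

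Finally, Assumption~\ref{as:gamma_definite} provides a vector $\gamma^* \in \R^m$ with $\gamma^*_i \ge 0$ for all $i \in \intset{m_I}$ and $A(\gamma^*) = A_0 + \sum_{i=1}^m \gamma^*_i A_i \succ 0$. By definition of $\Gamma$ we have $\gamma^* \in \Gamma = \mc F$, and $A(\gamma^*) \succ 0$, so $\mc F$ is a definite face in the sense of Definition~\ref{def:definiteFace}.

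There is no substantial obstacle here: the only point requiring a touch of care is the standard convex-geometry fact that a face of a convex set containing a (relative) interior point of that set is the whole set, which is exactly the second paragraph above and could alternatively just be cited.
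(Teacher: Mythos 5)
Your proof is correct. The paper records this statement as an observation without a written proof, and your argument is the natural justification: since $\aff\dim(\mc F)=m$, the convex set $\mc F$ has nonempty interior, an interior point of $\mc F$ is an interior point of $\Gamma$, and a face containing an interior point of $\Gamma$ must be all of $\Gamma$, which contains the point $\gamma^*$ from Assumption~\ref{as:gamma_definite} with $A(\gamma^*)\succ 0$. A marginally more direct variant skips establishing $\mc F=\Gamma$: pick $\hat\gamma\in\inter(\mc F)$ and set $\gamma_\epsilon\coloneqq(1-\epsilon)\hat\gamma+\epsilon\gamma^*$; for small $\epsilon>0$ we have $\gamma_\epsilon\in\mc F$ by interiority, while $A(\gamma_\epsilon)=(1-\epsilon)A(\hat\gamma)+\epsilon A(\gamma^*)\succeq\epsilon A(\gamma^*)\succ 0$, so $\mc F$ is definite. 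Both routes rest on the same convex-geometry facts, and your version additionally yields the (true, slightly stronger) conclusion $\mc F=\Gamma$.
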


Together, Lemma~\ref{lem:F_contains_definite} and Observation~\ref{obs:aff_dim_m_is_definite} give a sufficient condition for a point $(\hat x,\hat t)\in\cD_\textup{SDP}$ to belong to $\cD$, namely when $\aff\dim(\cF(\hat x)) = m$. Concretely, we can use the quantity $\aff\dim(\cF(\hat x))$ to measure the progress of a convex decomposition algorithm.

\begin{lemma}
\label{lem:applied_easy_part_conv_hull}
Suppose Assumptions~\ref{as:gamma_definite}~and~\ref{as:cF_well_defined} hold. Suppose furthermore that:
\begin{equation}
  \label{eq:applied_easy_part_conv_hull}
  \parbox{\dimexpr\linewidth-4em}{\strut
    For every $(\hat x,\hat t)\in\cD_\textup{SDP}$ with $\cF(\hat x)$ semidefinite, we can write $(\hat x,\hat t)$ as a convex combination of points $(x_\alpha,t_\alpha)\in\cD_\textup{SDP}$ such that $\aff\dim(\cF(x_\alpha))>\aff\dim(\cF(\hat x))$.
    \strut}
\end{equation}
Then $\conv(\cD) = \cD_\textup{SDP}$ and $\Opt=\Opt_\textup{SDP}$.
\end{lemma}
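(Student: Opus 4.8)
The plan is to run an induction on the codimension of the face $\cF(\hat x)$ inside $\Gamma$, using condition~\eqref{eq:applied_easy_part_conv_hull} as the inductive step and Lemma~\ref{lem:F_contains_definite} together with Observation~\ref{obs:aff_dim_m_is_definite} as the base case. Concretely, for $(\hat x,\hat t)\in\cD_\textup{SDP}$ I would argue by downward induction on $m-\aff\dim(\cF(\hat x))$ that $(\hat x,\hat t)\in\conv(\cD)$. If $\aff\dim(\cF(\hat x))=m$, then $\cF(\hat x)$ is definite by Observation~\ref{obs:aff_dim_m_is_definite}, hence $(\hat x,\hat t)\in\cD\subseteq\conv(\cD)$ by Lemma~\ref{lem:F_contains_definite}. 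If $\cF(\hat x)$ is definite (regardless of its affine dimension), the same lemma again gives $(\hat x,\hat t)\in\cD$. Otherwise $\cF(\hat x)$ is a semidefinite face, and \eqref{eq:applied_easy_part_conv_hull} expresses $(\hat x,\hat t)$ as a convex combination of points $(x_\alpha,t_\alpha)\in\cD_\textup{SDP}$ with strictly larger $\aff\dim(\cF(x_\alpha))$; by the induction hypothesis each $(x_\alpha,t_\alpha)\in\conv(\cD)$, and since $\conv(\cD)$ is convex, so is $(\hat x,\hat t)$. This shows $\cD_\textup{SDP}\subseteq\conv(\cD)$; combined with the reverse inclusion $\conv(\cD)\subseteq\cD_\textup{SDP}$ established right after \eqref{eq:sdp_epi}, we get $\conv(\cD)=\cD_\textup{SDP}$.

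For the conclusion $\Opt=\Opt_\textup{SDP}$, I would simply note that both quantities are infima of the linear functional $(x,t)\mapsto 2t$ over sets ($\cD$ via the reformulation in the overview, and $\cD_\textup{SDP}$ via Lemma~\ref{lemma:sdp_in_terms_of_Gamma}), and that minimizing a linear functional over a set equals minimizing it over that set's convex hull. Since $\conv(\cD)=\cD_\textup{SDP}$, the two optimal values coincide. This part is essentially immediate once the convex hull result is in hand.

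There is one technical subtlety worth flagging: condition~\eqref{eq:applied_easy_part_conv_hull} only guarantees a convex decomposition with \emph{strictly} larger affine dimension of the associated face, but it does not a priori bound the number of recursion steps uniformly — however, since $\aff\dim(\cF(\cdot))$ takes integer values in $\{0,1,\dots,m\}$, any such chain of strict increases terminates after at most $m$ steps, so the induction is well-founded. One should also make sure the decomposition is genuinely finite at each step (a convex combination of finitely many points), which is exactly what \eqref{eq:applied_easy_part_conv_hull} asserts with its indexing by $\alpha$; if the intended reading allowed infinite or limiting combinations, a short closedness argument for $\conv(\cD)$ (or passing to $\overline{\conv}(\cD)$) would be needed, but I will read \eqref{eq:applied_easy_part_conv_hull} as providing finite combinations.

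The main obstacle is not in this lemma at all — the ``easy part'' is genuinely easy once \eqref{eq:applied_easy_part_conv_hull} is granted — but rather in later sections, where verifying that \eqref{eq:applied_easy_part_conv_hull} actually holds (i.e., producing the convex decomposition that increases the face dimension) will require the structural hypotheses on $\Gamma$ being polyhedral and on the quadratic eigenvalue multiplicity $k$. For the present statement, the only real care needed is to phrase the induction cleanly and to invoke convexity of $\conv(\cD)$ at the right moment.
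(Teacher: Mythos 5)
Your proof is correct and is essentially the paper's argument: the paper simply packages your downward induction as an extremal/contradiction step (choosing a point of $\cD_\textup{SDP}\setminus\conv(\cD)$ maximizing $\aff\dim(\cF(\cdot))$, which is an integer bounded by $m-1$ via Lemma~\ref{lem:F_contains_definite} and Observation~\ref{obs:aff_dim_m_is_definite}), then applies \eqref{eq:applied_easy_part_conv_hull} exactly as you do. Your handling of the finiteness of the convex combination and of the $\Opt=\Opt_\textup{SDP}$ conclusion matches the paper's intent.
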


\begin{proof}
Suppose for the sake of contradiction that $\conv(\cD)\neq \cD_\textup{SDP}$. Let
\begin{align*}
(\hat x,\hat t) \in\argmax_{\cD_\textup{SDP}\setminus\conv(\cD)} \left(\aff\dim(\cF(\hat x))\right).
\end{align*}
The point $(\hat x,\hat t)$ is well-defined as $\aff\dim(\cF(\hat x))$ is a nonnegative integer bounded above by $m-1$ (this follows from Lemma~\ref{lem:F_contains_definite} and Observation~\ref{obs:aff_dim_m_is_definite}).
By Lemma~\ref{lem:F_contains_definite}, we must have that $\cF(\hat x)$ is semidefinite. By \eqref{eq:applied_easy_part_conv_hull}, there exist points $(x_\alpha,t_\alpha)\in\cD_\textup{SDP}$ such that $\aff\dim(\cF(x_\alpha))>\aff\dim(\cF(\hat x))$. Then by construction of $(\hat x,\hat t)$ and the fact that $\aff\dim(\cF(x_\alpha))>\aff\dim(\cF(\hat x))$, we have that $(x_\alpha,t_\alpha)\in\conv(\cD)$. We conclude that
$(\hat x,\hat t)\in \conv(\set{(x_\alpha,t_\alpha)}_\alpha) \subseteq \conv(\cD)$, a contradiction.
\end{proof}
Equivalently, when the assumptions of Lemma~\ref{lem:applied_easy_part_conv_hull} hold, the following convex decomposition procedure is guaranteed to terminate and succeed:
Given $(\hat x,\hat t)\in\cD_\textup{SDP}$, if $(\hat x,\hat t)\in\cD$ return $(\hat x,\hat t)$, else decompose $(\hat x,\hat t)$ as a finite convex combination of points $(x_\alpha,t_\alpha)\in\cD_\textup{SDP}$ with $\aff\dim(\cF(x_\alpha))>\aff\dim(\cF(\hat x))$ and recursively compute convex decompositions of $(x_\alpha,t_\alpha)$.

A similar proof gives the following sufficient condition in the context of the SDP tightness result.
\begin{lemma}
\label{lem:applied_easy_part_sdp_tightness}
Suppose Assumptions~\ref{as:gamma_definite}~and~\ref{as:cF_well_defined} hold. Suppose furthermore that:
\begin{equation}
  \label{eq:applied_easy_part_sdp_tightness}
  \parbox{\dimexpr\linewidth-4em}{\strut
    For every optimal $(\hat x,\hat t)\in\cD_\textup{SDP}$ with $\cF(\hat x)$ semidefinite, there exists a point $(x',t')\in\cD_\textup{SDP}$ such that $t'\leq \hat t$ and $\aff\dim(\cF(x'))>\aff\dim(\cF(\hat x))$.
    \strut}
\end{equation}
Then $\Opt=\Opt_\textup{SDP}$.
\end{lemma}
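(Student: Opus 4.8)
The plan is to mirror the proof of Lemma~\ref{lem:applied_easy_part_conv_hull}, but now tracking a single optimal point rather than decomposing an arbitrary point of $\cD_\textup{SDP}$. Concretely, I would argue by contradiction: suppose $\Opt \neq \Opt_\textup{SDP}$, hence $\Opt > \Opt_\textup{SDP}$ (the reverse inequality always holds since $\mc D\subseteq\mc D_\textup{SDP}$). By Lemma~\ref{lemma:sdp_in_terms_of_Gamma}, the value $\Opt_\textup{SDP} = \min_{x}\sup_{\gamma\in\Gamma}q(\gamma,x)$ is attained, so the set of optimal points $(\hat x,\hat t)\in\cD_\textup{SDP}$ (i.e.\ those with $2\hat t = \Opt_\textup{SDP}$) is nonempty. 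Among these, pick one maximizing $\aff\dim(\cF(\hat x))$; this is well-defined since $\aff\dim(\cF(\hat x))$ is a nonnegative integer bounded above by $m$.

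Next I would split on whether $\cF(\hat x)$ is definite or semidefinite. If it is definite, then Lemma~\ref{lem:F_contains_definite} gives $(\hat x,\hat t)\in\cD$, which forces $\Opt \leq 2\hat t = \Opt_\textup{SDP}$, contradicting $\Opt > \Opt_\textup{SDP}$. So $\cF(\hat x)$ must be semidefinite. Now invoke hypothesis~\eqref{eq:applied_easy_part_sdp_tightness}: there is a point $(x',t')\in\cD_\textup{SDP}$ with $t'\leq\hat t$ and $\aff\dim(\cF(x'))>\aff\dim(\cF(\hat x))$. Since $(\hat x,\hat t)$ is optimal, $2\hat t = \Opt_\textup{SDP}$, and since $(x',t')\in\cD_\textup{SDP}$ we have $2t'\geq\Opt_\textup{SDP}$; combined with $t'\leq\hat t$ this yields $2t' = \Opt_\textup{SDP}$, i.e.\ $(x',t')$ is also an optimal point of $\cD_\textup{SDP}$. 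But then $(x',t')$ is an optimal point with strictly larger $\aff\dim(\cF(\cdot))$ than $(\hat x,\hat t)$, contradicting the maximality in the choice of $(\hat x,\hat t)$.

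One technical point worth handling carefully: I should make sure the ``optimal $(\hat x,\hat t)\in\cD_\textup{SDP}$'' referenced in hypothesis~\eqref{eq:applied_easy_part_sdp_tightness} is interpreted consistently — namely as a point with $2\hat t=\Opt_\textup{SDP}$ — and that such a point exists, which follows from attainment of the $\min$ in Lemma~\ref{lemma:sdp_in_terms_of_Gamma} together with Assumption~\ref{as:cF_well_defined} (so that $\cF(\hat x)$ is defined at the optimizer). I would also note that $\aff\dim(\cF(\hat x))\leq m$ always, and $\aff\dim(\cF(\hat x))=m$ would force $\cF(\hat x)$ definite by Observation~\ref{obs:aff_dim_m_is_definite}; so in the contradiction hypothesis the chosen $(\hat x,\hat t)$ necessarily has $\aff\dim(\cF(\hat x))\leq m-1$, leaving room for the strict increase promised by~\eqref{eq:applied_easy_part_sdp_tightness}.

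I do not expect any substantial obstacle here — the argument is a straightforward ``extremal counterexample'' induction on $\aff\dim(\cF(\cdot))$, structurally identical to the proof of Lemma~\ref{lem:applied_easy_part_conv_hull}, with the convex-combination step replaced by the simpler single-point step and the role of ``belongs to $\conv(\cD)$'' replaced by ``has objective value $\Opt_\textup{SDP}$ and lies in $\cD$.'' The only mild subtlety is bookkeeping the inequalities $2t'\geq\Opt_\textup{SDP}$ and $t'\leq\hat t$ to conclude $(x',t')$ is again optimal, which is immediate.
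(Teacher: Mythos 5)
Your proof is correct and is essentially the argument the paper intends: it omits the proof of Lemma~\ref{lem:applied_easy_part_sdp_tightness}, stating that it follows the extremal-counterexample argument of Lemma~\ref{lem:applied_easy_part_conv_hull} almost exactly, which is precisely what you carry out (pick an optimal point of $\cD_\textup{SDP}$ maximizing $\aff\dim(\cF(\cdot))$, rule out the semidefinite case via hypothesis~\eqref{eq:applied_easy_part_sdp_tightness} and conclude via Lemma~\ref{lem:F_contains_definite}). Your bookkeeping of attainment of $\Opt_\textup{SDP}$ via Lemma~\ref{lemma:sdp_in_terms_of_Gamma} and of the bound on $\aff\dim(\cF(\hat x))$ is also consistent with the paper's setup, so no gaps remain.
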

The proof of this statement follows the proof of Lemma~\ref{lem:applied_easy_part_conv_hull} almost exactly and is omitted.

The ``hard part'' of our framework for the convex hull result is to give sufficient conditions for \eqref{eq:applied_easy_part_conv_hull}. We give examples of such conditions in Section~\ref{sec:conv_hull}.
Similarly, the ``hard part'' of our framework for the SDP tightness result is to give sufficient conditions for \eqref{eq:applied_easy_part_sdp_tightness}. We give examples of such conditions in Section~\ref{sec:tightness}. 

\section{Symmetries in QCQPs}
\label{sec:symmetries}

In this section, we examine a parameter $k$ that captures the amount of symmetry present in a QCQP of the form \eqref{eq:qcqp}.

\begin{definition}
The \textit{quadratic eigenvalue multiplicity} of a QCQP of the form \eqref{eq:qcqp} is the largest integer $k$ such that for every $i\in\intset{0,m}$ there exists $\cA_i \in \bb S^n$ for which $A_i = I_k\otimes \cA_i$.
\end{definition}

The quadratic eigenvalue multiplicity $k$ is always at least $1$ as we can write each $A_i$ as $A_i = I_1 \otimes \cA_i$. On the other hand, it is clear that $k$ must be a divisor of $N$. In particular, $k$ is always well defined.

For $\gamma\in\R^m$, we also define $\cA(\gamma) \coloneqq \cA_0 + \sum_{i=1}^m \gamma_i \cA_i$.

\begin{example}
Consider the following optimization problem
\begin{align*}
\inf_{x\in\R^4}\set{-\norm{x}_2^2:\, \begin{array}
	{l}
	x_1^2 - x_2^2 + x_3^2 - x_4^2 -1 \leq 0\\
	2x_1^2 + x_2^2 + 2x_3^2 + x_4^2 -1 \leq 0\\
\end{array}}.
\end{align*}
The quadratic forms in this problem are
\begin{align*}
A_0 = I_2 \otimes \left(\begin{smallmatrix}
	-1 &\\
	& -1
\end{smallmatrix}\right),\qquad
A_1 = I_2 \otimes \left(\begin{smallmatrix}
	1 &\\
	& -1
\end{smallmatrix}\right),\quad\text{and}\quad
A_2 = I_2 \otimes \left(\begin{smallmatrix}
	2 &\\
	& 1
\end{smallmatrix}\right).
\end{align*}
Thus, this QCQP has quadratic eigenvalue multiplicity $k\geq 2$. Recalling that $k$ must be a divisor of $N$ and noting that $A_1$ cannot be written as $A_1 = I_4 \otimes \cA_1$ for any $\cA_1\in\S^1$, we conclude that $k = 2$.\mathprog{\qed}
\end{example}

\begin{remark}\label{rem:computing_k_distinct_eigenvalues}
Suppose we have access to some $\mu\in\R^m$ such that $\cA(\mu)$ has distinct eigenvalues. Then, by simply performing a spectral decomposition of $A(\mu)$ and counting the multiplicities of the eigenvalues, we can correctly output the value $k$.\mathprog{\qed}
\end{remark}

\begin{remark}\label{rem:cstar_algebras}
The quadratic eigenvalue multiplicity can be viewed as a particular \textit{group symmetry} in $\set{A_0,A_1,\dots,A_m}$. Group symmetric SDPs have been studied in more generality with the goal of reducing the size of large SDPs (and in turn their solve-times)~\cite{gatermann2004symmetry,deKlerk2007reduction}. See also~\cite{deKlerk2010exploiting} for an application of such ideas to solving large real-world instances of the quadratic assignment problem.

Specifically, the \textit{Wedderburn decomposition} of the matrix $\C^*$-algebra generated by $\set{A_0,A_1,\dots,A_m}$ plays a prominent role in the analysis of such symmetries. In this setting, our parameter $k$ can be compared to the ``block multiplicity'' of a basic algebra in the Wedderburn decomposition.
This decomposition can be computed efficiently given access to a generic element from the center of the algebra (see \cite{deKlerk2011numerical,gijswijt2010matrix} and references therein).\mathprog{\qed}
\end{remark}

Recall that in Lemma~\ref{lem:semidefinite_shares_zero}, we showed that $\dim(\cV(\cF))\geq 1$ whenever $\cF$ is a semidefinite face of $\Gamma$. The following lemma will show that when the quadratic eigenvalue multiplicity is large, we can in fact lower bound $\dim(\cV(\cF))\geq k$.
This is the main property of the quadratic eigenvalue multiplicity that we will use in Sections~\ref{sec:conv_hull}~and~\ref{sec:tightness}.

\begin{lemma}
\label{lem:V_F_large}
If $\mc F$ is a semidefinite face of $\Gamma$, then $\dim(\mc V(\mc F)) \geq k$.
\end{lemma}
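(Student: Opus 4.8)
The plan is to push the whole problem down to the ``reduced'' symmetric matrices $\cA_0,\dots,\cA_m\in\bb S^n$ (where $n=N/k$) supplied by the definition of the quadratic eigenvalue multiplicity, and then to re-run the argument of Lemma~\ref{lem:semidefinite_shares_zero} at this reduced scale. The key structural fact is that $A(\gamma)=I_k\otimes\cA(\gamma)$ for every $\gamma\in\R^m$; hence $A(\gamma)$ and $\cA(\gamma)$ have identical spectra (each eigenvalue of $\cA(\gamma)$ simply occurring $k$ times as often in $A(\gamma)$), so $A(\gamma)\succeq 0\Longleftrightarrow\cA(\gamma)\succeq 0$ and $A(\gamma)\succ 0\Longleftrightarrow\cA(\gamma)\succ 0$. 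In particular $\Gamma=\set{\gamma\in\R^m:\cA(\gamma)\succeq 0,\ \gamma_i\geq 0\ \forall i\in\intset{m_I}}$, and by Definition~\ref{def:definiteFace} a face $\cF$ of $\Gamma$ is semidefinite precisely when $\cA(\gamma)\not\succ 0$ for every $\gamma\in\cF$.

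Next I would introduce the reduced shared zero eigenspace $\cW(\cF)\coloneqq\set{w\in\R^n:\cA(\gamma)w=0\ \forall\gamma\in\cF}$ and show that $\cV(\cF)$ contains a lifted copy of it. Indeed, from $(I_k\otimes\cA(\gamma))(u\otimes w)=u\otimes(\cA(\gamma)w)$ we get that $u\otimes w\in\cV(\cF)$ whenever $u\in\R^k$ and $w\in\cW(\cF)$; thus $\R^k\otimes\cW(\cF)\subseteq\cV(\cF)$, and since $\dim(\R^k\otimes\cW(\cF))=k\dim\cW(\cF)$, it suffices to prove $\dim\cW(\cF)\geq 1$.

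That $\cW(\cF)\neq\set{0}$ is exactly the statement of Lemma~\ref{lem:semidefinite_shares_zero} with the $\cA_i$ in place of the $A_i$, and I would repeat its short proof verbatim: pick $\hat\gamma$ in the relative interior of $\cF$; since $\cF$ is a semidefinite face we have $\cA(\hat\gamma)\not\succ 0$, so there is $w\in\bS^{n-1}$ with $w^\top\cA(\hat\gamma)w=0$. If $w^\top\cA(\gamma')w>0$ for some $\gamma'\in\cF$, then for sufficiently small $\epsilon>0$ the point $\gamma_\epsilon\coloneqq\hat\gamma+\epsilon(\hat\gamma-\gamma')$ still lies in $\cF$, and linearity of $\gamma\mapsto w^\top\cA(\gamma)w$ forces $w^\top\cA(\gamma_\epsilon)w<0$, contradicting $\cA(\gamma_\epsilon)\succeq 0$. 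Hence $w^\top\cA(\gamma)w\leq 0$ for all $\gamma\in\cF$, which together with $\cA(\gamma)\succeq 0$ gives $\cA(\gamma)w=0$ for all $\gamma\in\cF$, i.e.\ $w\in\cW(\cF)$.

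Putting the pieces together gives $\dim\cV(\cF)\geq k\dim\cW(\cF)\geq k$. I do not anticipate a genuine obstacle: the only points that require care are the transfer of (semi)definiteness across the Kronecker factorization $A(\gamma)=I_k\otimes\cA(\gamma)$ and the elementary identity $\dim(\R^k\otimes\cW(\cF))=k\dim\cW(\cF)$. A marginally slicker variant would be to note first---as is implicit in the proof of Lemma~\ref{lem:semidefinite_shares_zero}---that $\cV(\cF)=\Ker A(\hat\gamma)$ for any $\hat\gamma$ in the relative interior of $\cF$, and then to compute $\Ker(I_k\otimes\cA(\hat\gamma))=\R^k\otimes\Ker\cA(\hat\gamma)$ directly, using that $\cA(\hat\gamma)$ is positive semidefinite but not positive definite; this bypasses $\cW(\cF)$ but is the same argument.
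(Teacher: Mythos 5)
Your proposal is correct and uses essentially the same argument as the paper: the key step in both is the Kronecker identity $A(\gamma)(y\otimes v)=y\otimes(\cA(\gamma)v)$, which shows $\R^k\otimes v\subseteq\cV(\cF)$ once a single nonzero shared zero-eigenvector is found. The only cosmetic difference is that the paper invokes Lemma~\ref{lem:semidefinite_shares_zero} at the full scale and extracts a nonzero block $v_i$ of $\hat v$, whereas you re-run that lemma's relative-interior argument at the reduced $n$-dimensional level; both are valid and equivalent in substance.
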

\begin{proof}
By Lemma~\ref{lem:semidefinite_shares_zero}, there exists $\hat v\in \mc V(\mc F)\cap \mb S^{N-1}$. We can write $\hat v$ as the concatenation of $k$-many $n$-dimensional vectors $v_1,\dots,v_k \in\R^n$. Then for $\gamma\in\mc F$,
\begin{align*}
0 = A(\gamma)\hat v = \begin{pmatrix}
	\cA(\gamma) &&&\\
	&\cA(\gamma) &&\\
	&&\ddots &\\
	&&&\cA(\gamma)
\end{pmatrix}\begin{pmatrix}
	v_1\\ v_2 \\ \vdots \\ v_k
\end{pmatrix}
= \begin{pmatrix}
	\cA(\gamma) v_1 \\ \cA(\gamma) v_2 \\\vdots\\ \cA(\gamma) v_k
\end{pmatrix}.
\end{align*}
Hence, $\cA(\gamma) v_i = 0 $ for all $i\in\intset{k}$. As $\hat v\neq 0$, there exists some $i\in\intset{k}$ such that $v_i\neq 0$. Finally, note that for all $y\in\R^k$,
\begin{align*}
A(\gamma) (y\otimes v_i) = (I_k \otimes \cA(\gamma)) (y\otimes v_i) = y\otimes (\cA(\gamma)v_i) = 0.
\end{align*}
In other words, $\R^k \otimes v_i \subseteq \mc V(\mc F)$ and thus $\dim(\mc V(\mc F))\geq k$.
\end{proof}

\begin{remark}
In quadratic matrix programming \cite{beck2007quadratic,beck2012new}, we are asked to optimize
\begin{align}
\label{eq:quadratic_matrix_programming}
\inf_{X\in\R^{n\by k}}\set{\tr(X^\top \cA_0 X)+2\tr(B_0^\top X) + c_0: \, \begin{array}
	{r}
	\tr(X^\top \cA_i X)+2\tr(B_i^\top X) + c_i \leq 0,\quad\\
	\forall i\in\intset{m_I}\\
	\tr(X^\top \cA_i X)+2\tr(B_i^\top X) + c_i = 0,\quad\\
	\forall i\in\intset{m_I+1,m}
\end{array}}, 
\end{align}
where $\cA_i\in\bb S^n$, $B_i\in\R^{n\by k}$ and $c_i\in\R$ for all $i\in\intset{0,m}$. We can transform this program to an equivalent QCQP in the vector variable $x\in\R^{nk}$ by identifying
\begin{align*}
X = \begin{pmatrix}
	x_1 & \dots & x_{(k-1)n + 1}\\
	\vdots & \ddots & \vdots\\
	x_n & \dots & x_{kn}
\end{pmatrix}.
\end{align*}
Then
\begin{align*}
\tr(X^\top \cA_i X)+2\tr(B_i^\top X) + c_i = x^\top \left(I_k\otimes \cA_i\right)x+ 2 b_i^\top x + c_i,
\end{align*}
where, $b_i\in\R^{nk}$ has entries $(b_i)_{(t-1)n + s} = (B_i)_{s,t}$. In particular, the vectorized reformulation of \eqref{eq:quadratic_matrix_programming} has quadratic eigenvalue value multiplicity $k$.\mathprog{\qed}
\end{remark} 

\section{Convex hull results}
\label{sec:conv_hull}
In this section, we present new sufficient conditions for the convex hull result $\mc D_\textup{SDP} = \conv(\mc D)$. We will first analyze the case where the geometry of $\Gamma$ is particularly nice.

\begin{assumption}
\label{as:gamma_polyhedral}
	Assume that $\Gamma$ is polyhedral.\mathprog{\qed}
\end{assumption}

We remark that although Assumption~\ref{as:gamma_polyhedral} is rather restrictive, it is general enough to cover the case where the set of quadratic forms $\set{A_i}_{i\in\intset{0,m}}$ is diagonal or simultaneously diagonalizable---a class of QCQPs which has been studied extensively in the literature (see Section~\ref{subsec:socp} for references).
We will present examples and non-examples of Assumption~\ref{as:gamma_polyhedral} in Sections~\ref{subsec:applications_of_conv_hull}~and~\ref{subsec:socp} and discuss the difficulties in removing this assumption in Section~\ref{subsec:sharpness}. Finally, we will recover weaker results without this assumption in Section~\ref{sec:removing_polyhedral}.

Note that Assumption~\ref{as:gamma_polyhedral} immediately implies Assumption~\ref{as:cF_well_defined} so that we may apply the framework from Section~\ref{sec:framework}.

Our main result in this section is the following theorem.
\begin{theorem}
\label{thm:conv_hull_main}
Suppose Assumptions~\ref{as:gamma_definite}~and~\ref{as:gamma_polyhedral} hold. Furthermore, suppose that for every semidefinite face $\mc F$ of $\Gamma$ we have
\begin{align*}
\dim(\mc V(\mc F)) \geq \aff\dim(\set{b(\gamma):\,\gamma\in\mc F}) + 1.
\end{align*}
Then,
\begin{align*}
\conv(\mc D) = \mc D_\textup{SDP} \quad\text{and}\quad \Opt = \Opt_\textup{SDP}.
\end{align*}
\end{theorem}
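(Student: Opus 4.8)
The plan is to verify hypothesis \eqref{eq:applied_easy_part_conv_hull} of Lemma~\ref{lem:applied_easy_part_conv_hull}; once this is done, the conclusion $\conv(\cD)=\cD_\textup{SDP}$ and $\Opt=\Opt_\textup{SDP}$ follows immediately. So fix $(\hat x,\hat t)\in\cD_\textup{SDP}$ with $\cF\coloneqq\cF(\hat x)$ a semidefinite face, and set $r\coloneqq\aff\dim(\set{b(\gamma):\gamma\in\cF})$. The hypothesis gives $\dim(\cV(\cF))\geq r+1$. By Lemma~\ref{lemma:sdp_in_terms_of_Gamma}, the constraint defining $\cD_\textup{SDP}$ is $\sup_{\gamma\in\Gamma}q(\gamma,x)\leq 2t$, and $\cF$ is exactly the set of maximizers at $\hat x$, so $q(\gamma,\hat x)$ takes a common value $2\hat t_0\le 2\hat t$ for all $\gamma\in\cF$. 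The idea is to perturb $\hat x$ along the subspace $\cV(\cF)$: for $v\in\cV(\cF)$ and $\gamma\in\cF$,
\begin{align*}
q(\gamma,\hat x+v)=q(\gamma,\hat x)+2v^\top A(\gamma)\hat x+2b(\gamma)^\top v+v^\top A(\gamma)v = q(\gamma,\hat x)+2\big(A(\gamma)\hat x+b(\gamma)\big)^\top v,
\end{align*}
using $A(\gamma)v=0$. The first-order term $(A(\gamma)\hat x+b(\gamma))^\top v$ depends on $\gamma\in\cF$ only through $b(\gamma)$ modulo a fixed affine part (since $A(\gamma)\hat x$ ranges over an affine set of dimension $\le r$ as $\gamma$ varies in $\cF$, because $\gamma\mapsto A(\gamma)\hat x$ is affine and... here one must be slightly careful: $A(\gamma)\hat x$ lives in an affine space whose dimension is at most $\aff\dim(\cF)$, but we want it controlled by $r$; in fact the relevant quantity is $\gamma\mapsto A(\gamma)\hat x+b(\gamma)$, and one checks this affine map, restricted to $\cF$ and composed with $v\mapsto(\cdot)^\top v$ on $\cV(\cF)$, has image of affine dimension $\le r$).

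Concretely, I would consider the affine map $\phi:\cF\to\cV(\cF)^*$ sending $\gamma\mapsto \Proj_{\cV(\cF)}\big(A(\gamma)\hat x+b(\gamma)\big)$. Because $A(\gamma)\hat x$, projected onto $\cV(\cF)$, can be rewritten (using that $\cF$ is contained in an affine space and the zero-eigenspace is shared) as an affine function of $b(\gamma)$ plus a constant — this is the step that needs the polyhedrality / face structure and Lemma~\ref{lem:V_F_large}-type reasoning — the image $\phi(\cF)$ is an affine subset of $\cV(\cF)$ of dimension at most $r$. Since $\dim(\cV(\cF))\geq r+1$, there exists a nonzero direction $w\in\cV(\cF)$ orthogonal to the linear part of $\phi(\cF)$; along $\pm w$ the first-order term $(A(\gamma)\hat x+b(\gamma))^\top(\pm w)$ is constant over $\gamma\in\cF$, call it $\pm\delta$. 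Then for small $s>0$, $q(\gamma,\hat x\pm sw)=2\hat t_0\pm 2s\delta$ for all $\gamma\in\cF$, and for $\gamma\in\Gamma\setminus\cF$ we have $q(\gamma,\hat x)<2\hat t_0$ strictly, so by continuity (and the fact that $\Gamma$ is polyhedral, hence has finitely many faces, so one can handle $\Gamma\setminus\cF$ uniformly after a compactness reduction — restricting $\gamma$ to a compact slice using Assumption~\ref{as:gamma_definite} and Remark~\ref{rem:gamma_star_positive}) the maximum over $\Gamma$ at $\hat x\pm sw$ is $2\hat t_0\pm 2s\delta\le 2\hat t$ for $s$ small enough, if we choose $\delta=0$; and indeed we may assume $\delta=0$ by further choosing $w$ orthogonal to the single extra vector coming from the constant term, which costs one more dimension — this is precisely where the ``$+1$'' in the hypothesis is consumed. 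Thus $(\hat x\pm sw,\hat t)\in\cD_\textup{SDP}$, and $(\hat x,\hat t)=\tfrac12(\hat x+sw,\hat t)+\tfrac12(\hat x-sw,\hat t)$.

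It remains to argue $\aff\dim(\cF(\hat x\pm sw))>\aff\dim(\cF)$. The point is that at $\hat x\pm sw$ the set of maximizers of $q(\cdot,x)$ over $\Gamma$ \emph{contains} $\cF$ (since $q(\gamma,\hat x\pm sw)=2\hat t_0$ for all $\gamma\in\cF$, and this is $\ge q(\gamma,\hat x\pm sw)$ for $\gamma$ outside $\cF$ by the continuity/compactness argument above, for $s$ small), so $\cF\subseteq\cF(\hat x\pm sw)$, giving $\aff\dim(\cF(\hat x\pm sw))\ge\aff\dim(\cF)$. To get strict inequality I would pick $w$ slightly more carefully — not merely killing the first-order term on $\cF$ but making it vanish on a strictly larger face, or alternatively invoking that a generic such perturbation moves into the relative interior of a strictly larger definite-or-semidefinite face; a clean way is to note that if $\cF(\hat x\pm sw)=\cF$ for all small $s$ of both signs, then by the minimax/Lagrangian optimality one derives that $\hat x$ already satisfies all the QCQP constraints indexed by the active set of $\cF$, pushing $(\hat x,\hat t)$ into a lower-complexity situation and contradicting the choice — but the cleanest route, which I expect the authors take, is an explicit dimension count showing the maximizing face strictly grows. \textbf{The main obstacle} is exactly this last point together with the bookkeeping in the previous paragraph: showing that the affine dimension of $\set{b(\gamma):\gamma\in\cF}$ (rather than some larger quantity like $\aff\dim\cF$) is the right bound on the ``bad directions'' in $\cV(\cF)$, i.e.\ that $A(\gamma)\hat x$ contributes nothing new beyond $b(\gamma)$ after projecting to $\cV(\cF)$ — this requires genuinely using that all $\gamma\in\cF$ share the zero-eigenspace $\cV(\cF)$ — and then ensuring the perturbed point lands in a strictly higher-dimensional face rather than merely a face of equal dimension.
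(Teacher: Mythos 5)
The step you single out as ``the main obstacle'' --- that $A(\gamma)\hat x$ contributes nothing beyond $b(\gamma)$ after projecting onto $\cV(\cF)$ --- is in fact immediate: for $v\in\cV(\cF)$ and $\gamma\in\cF$, symmetry of $A(\gamma)$ and $A(\gamma)v=0$ give $v^\top A(\gamma)\hat x=0$, so the first-order term is exactly $b(\gamma)^\top v$. The genuine gaps are elsewhere, in the two steps you gloss over.

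First, your claim that the ``$+1$'' in the hypothesis lets you force $\delta=0$ is an off-by-one error. Constancy of $\gamma\mapsto b(\gamma)^\top w$ on $\cF$ already costs $r\coloneqq\aff\dim(\set{b(\gamma):\gamma\in\cF})$ homogeneous conditions on $w\in\cV(\cF)$; making the constant vanish means $w\perp\Proj_{\cV(\cF)}\spann\set{b(\gamma):\gamma\in\cF}$, up to $r+1$ conditions in a space whose dimension is only guaranteed to be $r+1$, so a nonzero $w$ need not exist. Indeed it fails already for the easy case of the TRS ($m=1$, $r=0$, $\cV(\cF)$ the minimum eigenspace with $b(\gamma)$ having a nonzero component in it), a case the theorem must cover (Corollary~\ref{cor:m=1}). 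The paper's Lemma~\ref{lem:pivoting_F} avoids this precisely by tilting the epigraph variable as well: it solves $\ip{b(\gamma),v}=s$ in the unknowns $(v,s)\in\cV(\cF)\times\R$ --- at most $r+1$ homogeneous equations in at least $r+2$ unknowns --- and perturbs along $(x_\alpha,t_\alpha)=(\hat x+\alpha v,\hat t+\alpha s)$, after the harmless normalization $\sup_{\gamma\in\Gamma}q(\gamma,\hat x)=2\hat t$. Keeping $\hat t$ fixed, as you do, is exactly what pushes you into the impossible $\delta=0$ requirement. Second, the strict increase of $\aff\dim(\cF(\cdot))$, which you explicitly leave open, is the other essential half of the argument: a small step only yields $\cF\subseteq\cF(\hat x\pm sw)$. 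The paper instead writes $\Gamma=\Gamma_e+\cone(\Gamma_r)$ (Minkowski--Weyl, using Assumption~\ref{as:gamma_polyhedral}), observes that the functions $\alpha\mapsto q(\gamma_e,x_\alpha)-2t_\alpha$ for $\gamma_e\in\extr(\Gamma_e)$ and $\alpha\mapsto\breve q(\gamma_r,x_\alpha)$ for $\gamma_r\in\extr(\Gamma_r)$ are finitely many convex quadratics, nonpositive at $\alpha=0$ and identically zero on $\cF$, with at least one strictly convex because of $\gamma^*$ from Assumption~\ref{as:gamma_definite}; it then steps to the largest $\alpha_+>0$ (and smallest $\alpha_-<0$) at which one of them becomes tight, producing a new multiplier $\gamma_+\in\cF(x_{\alpha_\pm})\setminus\cF(\hat x)$ and hence strict growth of the face. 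This extreme-point/ray bookkeeping also rigorously controls the recession directions of $\Gamma$ (via $\breve q(\gamma_r,\cdot)\leq 0$), which your ``compactness reduction'' does not. So while your skeleton (perturb inside $\cV(\cF)$, feed into Lemma~\ref{lem:applied_easy_part_conv_hull}) matches the paper's, the two steps on which the theorem actually turns do not go through as written.
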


As before, the second identity follows immediately from the first identity, thus it suffices to prove only the former. The main effort in this section will be the proof of the following lemma.

\begin{lemma}
\label{lem:pivoting_F}
Suppose Assumptions~\ref{as:gamma_definite}~and~\ref{as:gamma_polyhedral} hold. Furthermore, suppose that for every semidefinite face $\mc F$ of $\Gamma$ we have
\begin{align*}
\dim(\mc V(\mc F)) \geq \aff\dim(\set{b(\gamma):\,\gamma\in\mc F}) + 1.
\end{align*}
Let $(\hat x,\hat t)\in\mc D_\textup{SDP}$ and let $\mc F = \mc F(\hat x)$. If $\mc F$ is a semidefinite face of $\Gamma$, then $(\hat x,\hat t)$ can be written as a convex combination of points $(x_\alpha,t_\alpha)\in\cD_\textup{SDP}$ such that $\aff\dim(\cF(x_\alpha))>\aff\dim(\cF(\hat x))$.
\end{lemma}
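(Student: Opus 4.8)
The plan is to pass to the convex function $f(x)\coloneqq\tfrac12\sup_{\gamma\in\Gamma}q(\gamma,x)$, for which Lemma~\ref{lemma:sdp_in_terms_of_Gamma} gives $\mc D_\textup{SDP}=\epi(f)$. Since $\epi(f)$ is closed under increasing the $t$-coordinate, I would first reduce to the case $\hat t=f(\hat x)$, adding the slack $\hat t-f(\hat x)\geq 0$ back to every $t$-coordinate at the end. Set $\mc F\coloneqq\cF(\hat x)$ and $\mc V\coloneqq\mc V(\mc F)$, fix $\hat\gamma$ in the relative interior of $\mc F$, and let $L$ be the linear subspace parallel to $\aff(\set{b(\gamma):\gamma\in\mc F})$, so that $\dim L=\aff\dim(\set{b(\gamma):\gamma\in\mc F})=:r$. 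The hypothesis reads $\dim\mc V\geq r+1$, which gives $\dim(\mc V\cap L^{\perp})\geq\dim\mc V-r\geq 1$; fix a nonzero $v\in\mc V\cap L^{\perp}$ and work along the line $x(s)\coloneqq\hat x+sv$.

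The computational core is the identity $q(\gamma,\hat x+w)=q(\gamma,\hat x)+2b(\gamma)^{\top}w=2f(\hat x)+2b(\gamma)^{\top}w$, valid for $w\in\mc V$ and $\gamma\in\mc F$ (using $A(\gamma)w=0$ and $\mc F=\argmax_{\gamma\in\Gamma}q(\gamma,\hat x)$), which becomes \emph{independent of $\gamma\in\mc F$} as soon as $w\in L^{\perp}$. Hence, with $\ell(s)\coloneqq f(\hat x)+s\,b(\hat\gamma)^{\top}v$, one has $\sup_{\gamma\in\mc F}q(\gamma,x(s))=2\ell(s)$, so $f(x(s))\geq\ell(s)$ for all $s$, with equality exactly when $x(s)\in\dom(f)$ and $\mc F\subseteq\cF(x(s))$. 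I would extract two facts. (i) $\cF(x(s))=\mc F$ for all small $|s|$: the maximizer set $\cF(x)$ is the face of $\Gamma$ exposed by the linear functional $\gamma\mapsto q(\gamma,x)$; the identity above says this functional is constant on $\mc F$ when $x=x(s)$, i.e.\ it lies in the linear span of the normal cone of $\Gamma$ at $\mc F$, and since at $s=0$ it lies in the \emph{relative interior} of that normal cone, continuity keeps it there for small $s$. (ii) The set $R\coloneqq\set{s:f(x(s))=\ell(s)}$ --- a closed interval (the $0$-sublevel set of the convex function $s\mapsto f(x(s))-\ell(s)$) that by (i) is a neighborhood of $0$ --- is bounded: if $[0,\infty)\subseteq R$ then $q(\gamma,x(s))\leq 2\ell(s)$ for every $\gamma\in\Gamma$ and $s\geq 0$, forcing the nonnegative leading coefficient $v^{\top}A(\gamma)v$ of the quadratic $s\mapsto q(\gamma,x(s))$ to vanish for all $\gamma\in\Gamma$; taking $\gamma^{*}$ with $A(\gamma^{*})\succ 0$ from Assumption~\ref{as:gamma_definite} then forces $v=0$, a contradiction.

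Granting (i) and (ii), the pivot is short: write $R=[s^{-},s^{+}]$ with $s^{-}<0<s^{+}$ and put $x^{\pm}\coloneqq x(s^{\pm})$. At $x^{+}$ every $\gamma\in\mc F$ still attains the maximum value $2\ell(s^{+})=2f(x^{+})$, so $\mc F\subseteq\cF(x^{+})$; on the other hand, for $s$ slightly larger than $s^{+}$ the maximum of $q(\cdot,x(s))$ over $\Gamma$ strictly exceeds $2\ell(s)$, so the face exposed there is disjoint from $\mc F$, and --- $\Gamma$ having finitely many faces --- some fixed face $\mc F'$ disjoint from $\mc F$ is exposed along a sequence $s_n\downarrow s^{+}$; continuity of $f$ at $x^{+}$ then shows every $\gamma'\in\mc F'$ also maximizes $q(\cdot,x^{+})$, whence $\cF(x^{+})\supsetneq\mc F$ and therefore $\aff\dim\cF(x^{+})>\aff\dim\cF(\hat x)$, since a proper face of a polyhedron has strictly smaller dimension; the same holds at $x^{-}$. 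Finally $\hat x=\tfrac{s^{+}}{s^{+}-s^{-}}x^{-}+\tfrac{-s^{-}}{s^{+}-s^{-}}x^{+}$, and because $f=\ell$ on $R$ the same convex combination of $f(x^{-})$ and $f(x^{+})$ equals $\ell(0)=f(\hat x)$; thus $(\hat x,f(\hat x))$ is a convex combination of $(x^{\pm},f(x^{\pm}))\in\mc D_\textup{SDP}$, and restoring the slack completes the decomposition of $(\hat x,\hat t)$.

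The hard part will be making the endpoint step airtight when $\Gamma$ --- and hence $\dom(f)$ --- is unbounded: I must verify $x^{\pm}\in\dom(f)$ and rule out the degenerate possibility that just beyond $s^{\pm}$ the point $x(s)$ merely exits $\dom(f)$ without the exposed face growing. I anticipate handling this through the recession cone of $\Gamma$: a recession direction $\delta$ of $\Gamma$ that becomes active for $\dom(f)$ at $x^{\pm}$ forces the whole ray $\mc F+\R_{\geq 0}\delta$ into $\cF(x^{\pm})$, and one must check --- using $A(\gamma^{*})\succ 0$ again, and if necessary pre-normalizing $\Gamma$ by an invertible affine change of the $x$-variables, permissible by Remark~\ref{rem:affine_transformation} --- that such a $\delta$ is not already a recession direction of $\mc F$, so that $\cF(x^{\pm})$ does gain affine dimension. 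The remaining ingredients --- the reduction to $\hat t=f(\hat x)$, the restriction identity, the bound on $R$, and the two-point convex combination --- are routine once this is settled.
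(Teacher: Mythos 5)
Your construction is essentially the paper's in different clothing: your $v\in\mc V(\mc F)\cap L^{\perp}$ is exactly the paper's solution of the system $\ip{b(\gamma),v}=s$ for all $\gamma\in\mc F$ (the dimension counts agree), your unboundedness argument for $R$ via $\gamma^{*}$ from Assumption~\ref{as:gamma_definite} is the paper's strict-convexity argument, and your normal-cone step (i) is a legitimate substitute for the paper's explicit computation on $\mc F$. The genuine gap is the one you flag yourself: the endpoint step in the case where the line exits $\dom(f)$ at $s^{+}$ (or $s^{-}$), i.e.\ where $\sup_{\gamma\in\Gamma}q(\gamma,x(s))=+\infty$ for all $s>s^{+}$. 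There your argument breaks, because for $s$ slightly beyond $s^{+}$ there is no attained maximizing face at all, so no face $\mc F'$ disjoint from $\mc F$ can be extracted along $s_n\downarrow s^{+}$, and the recession-cone fix you gesture at is not carried out (the affine change of variables you mention does not obviously help). This is precisely the case the paper treats separately via the Minkowski--Weyl decomposition $\Gamma=\Gamma_e+\cone(\Gamma_r)$ of Lemma~\ref{lem:decompose_gamma}: it includes the functions $\breve q(\gamma_r,x_\alpha)$ for extreme rays $\gamma_r$ in its finite family $\mc Q$, and when the first root $\alpha_+$ comes from such a ray it exhibits $\gamma_{+}=\gamma_f+\gamma_r\in\cF(x_{\alpha_+})\setminus\mc F$ (using $\breve q(\gamma_r,x_{\alpha_+})=0$ and $\breve q(\gamma_r,\hat x)<0$). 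Without an argument of this kind, your proof only establishes the face growth when the supremum remains finite just beyond the endpoint.

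The gap is, however, closable with a tool you already have: apply your step (i) at $s^{+}$ instead of at $0$. Your constancy identity $q(\gamma,x(s))=2\ell(s)$ for $\gamma\in\mc F$ holds for \emph{every} $s\in\R$, so the linear part $c(s)=(q_1(x(s)),\dots,q_m(x(s)))$ of $\gamma\mapsto q(\gamma,x(s))$ lies, for every $s$, in the orthogonal complement of the subspace parallel to $\aff(\mc F)$, which for the polyhedron $\Gamma$ equals the span of the normal cone $N$ of $\Gamma$ along $\mc F$. Since $s^{+}\in R$ gives $\mc F\subseteq\cF(x(s^{+}))$, if this containment were an equality then $c(s^{+})\in\relint N$, and continuity of $c$ within $\spann N$ would put $c(s)\in\relint N$ for $s$ slightly beyond $s^{+}$; that would force the maximum to be attained on $\mc F$ with value $2\ell(s)$, i.e.\ $f(x(s))=\ell(s)$, contradicting $s^{+}=\max R$. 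This yields $\cF(x(s^{\pm}))\supsetneq\mc F$ uniformly, covering the domain-exit case, and with that replacement (the rest of your outline --- the reduction to $\hat t=f(\hat x)$, closedness and boundedness of $R$, the two-point convex combination, and the fact that a proper face inclusion strictly increases affine dimension --- is sound) your argument becomes a complete, and slightly more geometric, alternative to the paper's proof.
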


The proof of Theorem~\ref{thm:conv_hull_main} follows at once from Lemma~\ref{lem:pivoting_F} and Lemma~\ref{lem:applied_easy_part_conv_hull}.

Before proving Lemma~\ref{lem:pivoting_F}, we introduce some new notation for handling the recessive directions of $\Gamma$ and prove a straightforward lemma about decomposing $\Gamma$. Let
\begin{align*}
\breve A(\gamma) \coloneqq \sum_{i=1}^m \gamma_i A_i,\quad
\breve b(\gamma) \coloneqq \sum_{i=1}^m \gamma_i b_i,\quad
\breve c(\gamma) \coloneqq \sum_{i=1}^m \gamma_i c_i,\quad
\breve q(\gamma,x) \coloneqq \sum_{i=1}^m \gamma_i q_i(x).
\end{align*}

\begin{lemma}
\label{lem:decompose_gamma}
Suppose Assumption~\ref{as:gamma_polyhedral} holds. Then $\Gamma$ can be written as
\begin{align*}
\Gamma = \Gamma_e + \cone(\Gamma_r)
\end{align*}
where both $\Gamma_e$ and $\Gamma_r$ are polytopes. Here, $\Gamma_r$ may be the trivial set $\set{0}$.
Furthermore, for $\hat x\in\R^N$ such that $\sup_{\gamma\in\Gamma} q(\gamma,\hat x)$ is finite, we have
\begin{align*}
\mc F(\hat x) = \mc F_e(\hat x) + \cone(\mc F_r(\hat x))
\end{align*}
where $\mc F_e(\hat x)$ is the face of $\Gamma_e$ maximizing $q(\gamma,\hat x)$ and $\mc F_r(\hat x)$ is the face of $\Gamma_e$ satisfying $\breve q(\gamma, \hat x) = 0$.
\end{lemma}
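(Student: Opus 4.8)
The plan is to first establish the decomposition of $\Gamma$ itself, and then to analyze how the face $\mathcal{F}(\hat x)$ inherits a parallel decomposition. For the first part, I would invoke the Minkowski--Weyl theorem: since $\Gamma$ is polyhedral (Assumption~\ref{as:gamma_polyhedral}) and moreover $\Gamma\subseteq\R^m$ is pointed --- here I would use Assumption~\ref{as:gamma_definite}, which gives a point $\gamma^*$ with $A(\gamma^*)\succ 0$, to argue that $\Gamma$ contains no lines (if $\gamma^* + \lambda d \in\Gamma$ for all $\lambda\in\R$, then taking $\lambda\to-\infty$ contradicts $A(\gamma^*+\lambda d)\succeq 0$ unless $\breve A(d) = 0$, and a short separate argument using the linear inequalities $\gamma_i\ge 0$ on $\intset{m_I}$ rules out the remaining lineality) --- we may write $\Gamma = \conv(\Gamma_e) + \cone(\Gamma_r)$ where $\Gamma_e$ is the (finite) set of vertices of $\Gamma$ and $\Gamma_r$ is a finite set of generators of the recession cone $\rec(\Gamma)$. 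Replacing $\conv(\Gamma_e)$ by the polytope it spans and noting $\cone(\Gamma_r)$ is unchanged if we take $\Gamma_r$ to be the convex hull of those generators, we get the stated form with both pieces polytopes (and $\Gamma_r=\set{0}$ exactly when $\Gamma$ is bounded).

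For the second part, fix $\hat x$ with $\sup_{\gamma\in\Gamma}q(\gamma,\hat x)$ finite. Writing $q(\gamma,\hat x) = q_0(\hat x) + \breve q(\gamma,\hat x)$ where $\gamma\mapsto\breve q(\gamma,\hat x)$ is linear, finiteness of the supremum over $\Gamma = \Gamma_e + \cone(\Gamma_r)$ forces $\breve q(\gamma,\hat x)\le 0$ for every $\gamma\in\cone(\Gamma_r)$, i.e. $\breve q(\cdot,\hat x)\le 0$ on $\rec(\Gamma)$; and then the maximum of the linear functional $q(\cdot,\hat x)$ over $\Gamma$ equals its maximum over the polytope part plus a contribution of $0$ from the recession directions on which it vanishes. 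Concretely, a point $\gamma = \gamma_e + \gamma_r$ (with $\gamma_e\in\Gamma_e$, $\gamma_r\in\cone(\Gamma_r)$) is a maximizer if and only if $\gamma_e$ maximizes $q(\cdot,\hat x)$ over $\Gamma_e$ and $\gamma_r\in\cone(\mathcal{F}_r(\hat x))$ where $\mathcal{F}_r(\hat x)$ is the face of $\Gamma_r$ on which the linear functional $\breve q(\cdot,\hat x)$ --- which is $\le 0$ throughout $\Gamma_r$ --- attains the value $0$ (equivalently, the exposed face of $\rec(\Gamma)$ in the direction maximizing $q(\cdot,\hat x)$). This gives $\mathcal{F}(\hat x) = \mathcal{F}_e(\hat x) + \cone(\mathcal{F}_r(\hat x))$, where $\mathcal{F}_e(\hat x) = \argmax_{\gamma\in\Gamma_e}q(\gamma,\hat x)$ is a face of $\Gamma_e$ and $\mathcal{F}_r(\hat x)$ is a face of $\Gamma_r$. (I note that the statement as written says ``$\mathcal{F}_r(\hat x)$ is the face of $\Gamma_e$'' --- this appears to be a typo for $\Gamma_r$, and I would phrase it with $\Gamma_r$.)

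The main obstacle, and the only genuinely delicate point, is verifying that $\mathcal{F}_e(\hat x)$ is literally a \emph{face} of $\Gamma_e$ and $\mathcal{F}_r(\hat x)$ a \emph{face} of $\Gamma_r$ in the exact sense needed later (so that the machinery of definite/semidefinite faces and $\mathcal{V}(\mathcal{F})$ applies), and that the Minkowski sum of these two faces is the face $\mathcal{F}(\hat x)$ of $\Gamma$ rather than merely equal to it as a set. This is a standard fact about faces of Minkowski sums of a polytope and a cone --- the faces of $P + C$ are exactly the sums $F_P + F_C$ of a face of $P$ with a face of $C$ that are ``compatible'' (exposed by a common linear functional) --- so I would either cite it or give the two-line argument: the linear functional $q(\cdot,\hat x)$ (extended by $0$-gradient in the obvious way) exposes $\mathcal{F}(\hat x)$ in $\Gamma$, exposes $\mathcal{F}_e(\hat x)$ in $\Gamma_e$, and (after subtracting its value on $\Gamma_e$) exposes $\mathcal{F}_r(\hat x)$ in $\Gamma_r$. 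Everything else is bookkeeping.
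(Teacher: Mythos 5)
Your proposal follows essentially the same route as the paper, whose proof is a one-liner: apply the Minkowski--Weyl decomposition theorem and observe that finiteness of $\sup_{\gamma\in\Gamma}q(\gamma,\hat x)$ forces $\breve q(\gamma_r,\hat x)\leq 0$ on the recession part, after which the face decomposition is bookkeeping; your treatment of the second half (and your reading of ``$\Gamma_e$'' as a typo for $\Gamma_r$ in the description of $\mc F_r(\hat x)$) is correct.

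One caveat on your first half: the pointedness digression is both unnecessary and, as stated, not correct. The lemma assumes only Assumption~\ref{as:gamma_polyhedral}, so importing Assumption~\ref{as:gamma_definite} goes beyond its hypotheses; more importantly, $\Gamma$ can genuinely contain lines, e.g.\ with two equality constraints whose quadratic forms satisfy $A_2=-A_1\neq 0$, the direction $d=(1,1)$ has $\breve A(d)=0$ and no sign restrictions apply, so your ``short separate argument'' ruling out lineality would fail and $\Gamma$ has no vertices. This is harmless for the lemma, because the decomposition theorem for polyhedra gives $\Gamma=\conv(V)+\cone(R)$ with $V,R$ finite without any pointedness assumption; the repair is simply to cite Minkowski--Weyl in that general form and take $\Gamma_e=\conv(V)$, $\Gamma_r=\conv(R)$, rather than insisting that $\Gamma_e$ be the vertex set of $\Gamma$.
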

\begin{proof}
This follows immediately from the Minkowski-Weyl Theorem and noting that $\breve q(\gamma_r,\hat x)\leq 0$ for all $\gamma_r\in\Gamma_r$ when $\sup_{\gamma\in\Gamma} q(\gamma,\hat x)$ is finite.
\end{proof}

\begin{proof}[Proof of Lemma~\ref{lem:pivoting_F}]
Without loss of generality, we may assume that $\sup_{\gamma\in\Gamma} q(\gamma,\hat x) = 2\hat t$. Otherwise,  we can decrease $\hat t$ and note that $\mc D$ is closed upwards in the $t$-direction. In particular, we have that $q(\gamma,\hat x)$ achieves the value $2\hat t$ on $\mc F$.

We claim that the following system in variables $v$ and $s$
\begin{align*}
	\begin{cases}
		\ip{b(\gamma), v} = s,\,\forall \gamma\in \mc F\\
		v\in\mc V(\mc F),\,s\in\R
	\end{cases}
\end{align*}
has a nonzero solution. Indeed, we may replace the constraint $\ip{b(\gamma), v} = s,\,\forall \gamma\in \mc F$ with at most
\begin{align*}
\aff\dim(\set{b(\gamma):\, \gamma\in\mc F}) + 1 \leq \dim(\mc V(\mc F))
\end{align*}
homogeneous linear equalities in the variables $v$ and $s$.
The claim then follows by noting that the equivalent system is an under-constrained homogeneous system of linear equalities and thus has a nonzero solution $(v,s)$. It is easy to verify that $v\neq 0$, hence by scaling we may take $v\in\mb S^{N-1}$. In the remainder of the proof, let $v\in\mc V(\mc F)\cap\mb S^{N-1}$ and $s\in \R$ denote a solution pair to the above system.

Apply Lemma~\ref{lem:decompose_gamma} to decompose $\Gamma = \Gamma_e +\cone(\Gamma_r)$ and $\mc F = \mc F_e + \cone(\mc F_r)$.

We will modify $(\hat x,\hat t)$ in the
$(v, s)$
direction. For $\alpha\in\R$, we define
\begin{align*}
(x_\alpha,t_\alpha)\coloneqq \left(\hat x + \alpha v,\, \hat t + \alpha s\right).
\end{align*}

First, for any fixed $\gamma_f\in\mc F$, we consider how $q(\gamma_f,x_\alpha) - 2t_\alpha$ changes with $\alpha$. We can expand
\begin{align*}
q(\gamma_f,x_\alpha) - 2t_\alpha &= \left(q(\gamma_f,\hat x) - 2\hat t\right) + 2\alpha \left(\hat x^\top A(\gamma_f)v+ b(\gamma_f)^\top v - s\right) + \alpha^2 v^\top A(\gamma_f) v\\
&= q(\gamma_f,\hat x) - 2\hat t\\
&= 0,
\end{align*}
where the second line follows as $A(\gamma_f)v = 0$ (recall $v\in\mc V(\mc F)$) and $b(\gamma_f)^\top v = s$ for all $\gamma_f\in\mc F$, and 
the third line follows as $q(\gamma_f, \hat x) = 2\hat t$ for all $\gamma_f\in\mc F$. 
Now consider any $\gamma_e\in\mc F_e$ and $\gamma_r\in\mc F_r$. Note that $\gamma_e$ and $\gamma_e+\gamma_r$ both lie in $\mc F$. Then by the above calculation, both $\alpha\mapsto q(\gamma_e, x_\alpha)-2 t_\alpha$ and $\alpha\mapsto q(\gamma_e+\gamma_r, x_\alpha)-2 t_\alpha$ are identically zero. In particular, we also have that $\alpha\mapsto \breve q(\gamma_r, x_\alpha) =q(\gamma_e+\gamma_r,x_\alpha) - q(\gamma_e,x_\alpha)= 0$ is identically zero.

On the other hand, for $\gamma_e\in\Gamma_e\setminus \mc F_e$, we can expand
\begin{align*}
q(\gamma_e,x_\alpha) - 2t_\alpha &= \left(q(\gamma_e,\hat x) - 2\hat t\right) + 2\alpha \left(\hat x^\top A(\gamma_e)v+ b(\gamma_e)^\top v - s\right) + \alpha^2 v^\top A(\gamma_e) v,
\end{align*}
and note that  $v^\top A(\gamma_e)v\geq0$ holds because $A(\gamma_e)$ is positive semidefinite. Hence, for $\gamma_e\in\Gamma_e\setminus \mc F_e$, we have that $\alpha\mapsto q(\gamma_e,x_\alpha) - 2t_\alpha$ is a (possibly non-strictly) convex quadratic function taking the value $q(\gamma_e,\hat x) - 2\hat t < 0$ at $\alpha = 0$ (the strict inequality here follows from the fact that $\gamma_e\in\Gamma_e\setminus\mc F_e$).

Similarly, for $\gamma_r\in \Gamma_r\setminus \mc F_r$, we can expand
\begin{align*}
\breve q(\gamma_r,x_\alpha) &= \breve q(\gamma_r,\hat x) + 2\alpha \left(\hat x^\top \breve {A}(\gamma_r)v + \breve b(\gamma_r)^\top v \right) + \alpha^2 v^\top \breve{A}(\gamma_r) v.
\end{align*}
Note that $\breve {A}(\gamma)\succeq 0$ for all $\gamma\in\Gamma_r$. Hence, for $\gamma_r\in\Gamma_r\setminus\mc F_r$, we have that $\alpha\mapsto \breve q(\gamma_r,x_\alpha)$ is a (possibly non-strictly) convex quadratic function taking the value $\breve q(\gamma_r, \hat x)<0$ at $\alpha = 0$ (the strict inequality here follows from the fact that $\gamma_r\in\Gamma_r\setminus\mc F_r$).

We have shown that the following finite set of univariate quadratic functions in $\alpha$,
\begin{align*}
\mc Q \coloneqq \bigg(\set{q(\gamma_e,x_\alpha) - 2 t_\alpha :\, \gamma_e\in\extr(\Gamma_e)} \cup \set{\breve q(\gamma_r,x_\alpha):\, \gamma_r\in\extr(\Gamma_r)} \bigg) \setminus \set{0},
\end{align*}
consists of (possibly non-strictly) convex quadratic functions which are negative at $\alpha = 0$. The finiteness of this set follows from the assumption that $\Gamma$ is polyhedral.

We claim that there exists a quadratic function in $\mc Q$ which is strictly convex:  Note $\gamma^*$ from Assumption~\ref{as:gamma_definite} satisfies $\gamma^*\in\Gamma$. Thus, we can decompose $\gamma^* = \gamma_e + \alpha \gamma_r$ for $\gamma_e\in\Gamma_e$, $\gamma_r\in\Gamma_r$, and $\alpha \geq 0$. Then, 
\begin{align*}
0<v^\top A(\gamma^*) v = \left[v^\top A(\gamma_e)v\right] + \alpha\left[v^\top \breve{A}(\gamma_r)v\right].
\end{align*}
Hence, one of the square-bracketed terms must be positive. The claim then follows by linearity in $\gamma$ of the functions $\gamma\mapsto v^\top A(\gamma)v$ and $\gamma\mapsto v^\top \breve{A}(\gamma)v$.

As $\mc Q$ is a finite set by Assumption~\ref{as:gamma_polyhedral}, there exists an $\alpha_+ >0$ such that $q(\alpha_+)\leq 0$ for all $q\in\mc Q$ with at least one equality. Then because $\Gamma_e=\conv(\extr(\Gamma_e))$ and $\Gamma_r=\conv(\extr(\Gamma_r))$, we have $q(\gamma_e,x_{\alpha_+}) \leq 2t_{\alpha_+}$ for all $\gamma_e\in\Gamma_e$ and $\breve q(\gamma_r,x_{\alpha_+})\leq 0$ for all $\gamma_r\in\Gamma_r$. Thus, $(x_{\alpha_+},t_{\alpha_+})\in\mc D_\textup{SDP}$. 

It remains to show that $\aff\dim(\mc F(x_{\alpha_+}))>\aff\dim(\mc F(\hat x))$.
The discussion in the previous paragraph implies that $\sup_{\gamma\in\Gamma}q(\gamma,x_{\alpha_+}) \leq 2t_{\alpha_+}$. This value is achieved by $\gamma_f\in \mc F(\hat x)$: Note $q(\gamma_f,x_{\alpha_+}) - 2t_{\alpha_+} = q(\gamma_f,\hat x) - 2\hat t = 0$. In particular, $\mc F(\hat x)\subseteq \mc F(x_{\alpha_+})$.
Thus, it suffices to show that there exists $\gamma_+ \in \mc F(x_{\alpha_+})\setminus \mc F(\hat x)$.

Suppose the quadratic function in $\mc Q$ with $\alpha_+$ as a root is of the form $q(\gamma_+,x_\alpha)-2t_\alpha$. Then $\gamma_+\in\mc F(x_{\alpha_+})$ as $q(\gamma_+,x_{\alpha_+}) -2t_{\alpha_+} = 0$.
On the other hand, $\gamma_+\notin\mc F(\hat x)$ by the construction of $\mc Q$.

Suppose the quadratic function in $\mc Q$ with $\alpha_+$ as a root is of the form $\breve q(\gamma_r,x_\alpha)$. Select any $\gamma_f\in\mc F(\hat x)$ and recall that $q(\gamma_f, x_\alpha) - 2t_\alpha$ is identically zero as an expression in $\alpha$. Define $\gamma_+ = \gamma_f +\gamma_r$. Then,
\begin{align*}
q(\gamma_+,x_{\alpha_+}) -2 t_{\alpha_+} = \left(q(\gamma_f,x_{\alpha_+}) -2 t_{\alpha_+}\right) + \breve q(\gamma_r,x_{\alpha_+}) = 0
\end{align*}
and hence $\gamma_+\in\mc F(x_{\alpha_+})$. On the other hand, $\breve q(\gamma_r, \hat x) < 0$ by the construction of $\mc Q$. In particular,
\begin{align*}
q(\gamma_+,\hat x) - 2\hat t = \left(q(\gamma_f,\hat x) - 2\hat t\right) + \breve q(\gamma_r, \hat x) <0
\end{align*}
and thus $\gamma_+\notin\mc F(\hat x)$.

The existence of an $\alpha_-<0$ satisfying the same properties is proved analogously. Then we may write $(\hat x,\hat t)$ as a convex combination of $(x_{\alpha_+}, t_{\alpha_+})$ and $(x_{\alpha_-}, t_{\alpha_-})$.
\end{proof}

The next theorem follows as a corollary to Theorem~\ref{thm:conv_hull_main}.

\begin{theorem}
\label{thm:conv_hull_symmetries}
Suppose Assumptions~\ref{as:gamma_definite}~and~\ref{as:gamma_polyhedral} hold. Furthermore, suppose that for every semidefinite face $\cF$ of $\Gamma$ we have
\begin{align*}
k\geq \aff\dim(\set{b(\gamma):\, \gamma\in\mc F}) + 1.
\end{align*}
Then,
\begin{align*}
\conv(\mc D) = \mc D_\textup{SDP}
\quad\text{and}\quad
\Opt &= \Opt_\textup{SDP}.
\end{align*}
\end{theorem}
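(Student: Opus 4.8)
The plan is to derive Theorem~\ref{thm:conv_hull_symmetries} directly from Theorem~\ref{thm:conv_hull_main}, using the quadratic eigenvalue multiplicity $k$ to control the dimension of the shared zero eigenspaces $\mc V(\mc F)$. Theorem~\ref{thm:conv_hull_main} already delivers the conclusions $\conv(\mc D) = \mc D_\textup{SDP}$ and $\Opt = \Opt_\textup{SDP}$ under Assumptions~\ref{as:gamma_definite}~and~\ref{as:gamma_polyhedral} together with the face-wise inequality $\dim(\mc V(\mc F)) \geq \aff\dim(\set{b(\gamma):\gamma\in\mc F}) + 1$ for every semidefinite face $\mc F$ of $\Gamma$. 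So it suffices to show that the hypothesis of Theorem~\ref{thm:conv_hull_symmetries}, namely $k \geq \aff\dim(\set{b(\gamma):\gamma\in\mc F}) + 1$ for every semidefinite face $\mc F$, implies this inequality, after which we simply invoke Theorem~\ref{thm:conv_hull_main}.

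Concretely, I would first note that Assumption~\ref{as:gamma_polyhedral} implies Assumption~\ref{as:cF_well_defined}, so the faces $\mc F(\hat x)$ and the notion of a semidefinite face are well-defined and the framework of Section~\ref{sec:framework} is available. The key input is then Lemma~\ref{lem:V_F_large}, which states that $\dim(\mc V(\mc F)) \geq k$ for any semidefinite face $\mc F$ of $\Gamma$. Chaining this with the hypothesis gives, for every semidefinite face $\mc F$,
\[
\dim(\mc V(\mc F)) \geq k \geq \aff\dim(\set{b(\gamma):\gamma\in\mc F}) + 1,
\]
which is exactly the assumption required by Theorem~\ref{thm:conv_hull_main}. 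Applying that theorem yields $\conv(\mc D) = \mc D_\textup{SDP}$, and the equality $\Opt = \Opt_\textup{SDP}$ follows at once by minimizing the linear objective $2t$ over the two sides.

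There is essentially no obstacle here: all of the substantive work lives in Theorem~\ref{thm:conv_hull_main} and Lemma~\ref{lem:V_F_large}, and the argument is pure bookkeeping to line up the hypotheses. The only mild points worth stating explicitly are that $k$ is always a well-defined positive integer (it is at least $1$ and divides $N$), so the hypothesis is never vacuous, and that it is enough to verify the inequality on semidefinite faces, since definite faces are already handled by Lemma~\ref{lem:F_contains_definite} inside the proof of Theorem~\ref{thm:conv_hull_main}.
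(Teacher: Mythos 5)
Your proposal is correct and matches the paper's proof exactly: the paper also derives Theorem~\ref{thm:conv_hull_symmetries} by combining Lemma~\ref{lem:V_F_large} (which gives $\dim(\mc V(\mc F))\geq k$ for every semidefinite face) with Theorem~\ref{thm:conv_hull_main}. The additional bookkeeping remarks you include are fine but not needed.
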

\begin{proof}
This theorem follows from Lemma~\ref{lem:V_F_large} and Theorem~\ref{thm:conv_hull_main}.
\end{proof}

\begin{remark}\label{rem:soc_representability}
We remark that when $\Gamma$ is polyhedral (Assumption~\ref{as:gamma_polyhedral}), the set $\mc D_\textup{SDP}$ is actually SOC representable: By Lemmas~\ref{lemma:sdp_in_terms_of_Gamma}~and~\ref{lem:decompose_gamma} we can write
\begin{align*}
\mc D_\textup{SDP} &= \set{(x,t):\, \sup_{\gamma\in\Gamma} q(\gamma,x)\leq 2t}\\
&=  \set{(x,t):\,\begin{array}
	{l}
	q(\gamma_e,x) \leq 2t ,\,\forall \gamma_e\in\extr(\Gamma_e)\\
	\breve q(\gamma_f,x)\leq 0 ,\,\forall \gamma_f\in\extr(\Gamma_r)
\end{array}}.
\end{align*}
In other words, $\mc D_\textup{SDP}$ is defined by finitely many convex quadratic inequalities. In particular, the assumptions of Theorem~\ref{thm:conv_hull_main}~and~\ref{thm:conv_hull_symmetries} imply that $\conv(\mc D)$ is SOC representable.\mathprog{\qed}
\end{remark}

\subsection{Applications of Theorems~\ref{thm:conv_hull_main}~and~\ref{thm:conv_hull_symmetries}}
\label{subsec:applications_of_conv_hull}

We now state some classes of problems where the assumptions of Theorems~\ref{thm:conv_hull_main}~and~\ref{thm:conv_hull_symmetries} hold.

The most basic setup we can cover via these theorems is the case of convex quadratic programs.
\begin{corollary}
\label{cor:conv_qcqps}
Suppose Assumption~\ref{as:gamma_definite} holds. If $A_0\succ 0$, $m_E = 0$ and $A_i\succeq 0$ for all $i\in\intset{m_I}$, then
\begin{align*}
\conv(\mc D) = \mc D_\textup{SDP}
\quad\text{and}\quad
\Opt = \Opt_\textup{SDP}.
\end{align*}
\end{corollary}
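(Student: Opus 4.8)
The plan is to verify that the hypotheses of Theorem~\ref{thm:conv_hull_main} hold here in the strongest possible way — with $\Gamma$ polyhedral and \emph{no} semidefinite faces at all — so that the desired identities follow immediately.

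First I would compute $\Gamma$ explicitly. Since $m_E = 0$ we have $\intset{m_I} = \intset{m}$, so the defining constraints of $\Gamma$ are $A(\gamma)\succeq 0$ together with $\gamma\geq 0$. For any $\gamma\geq 0$, the hypotheses $A_0\succ 0$ and $A_i\succeq 0$ give $A(\gamma) = A_0 + \sum_{i=1}^m \gamma_i A_i \succeq A_0 \succ 0$; in particular the constraint $A(\gamma)\succeq 0$ is automatically satisfied on $\R^m_+$. Hence $\Gamma = \R^m_+$, which is polyhedral, so Assumption~\ref{as:gamma_polyhedral} holds (and therefore so does Assumption~\ref{as:cF_well_defined}, as noted after its statement). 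Note also that Assumption~\ref{as:gamma_definite} is still being invoked, chiefly to ensure the feasible region of \eqref{eq:qcqp} is nonempty so that $\mc D$, $\conv(\mc D)$, and $\Opt$ are the intended objects; the existence of $\gamma^*$ with $A(\gamma^*)\succ 0$ is automatic here via $\gamma^* = 0$.

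The second step is to observe that the same computation shows $A(\gamma)\succ 0$ for \emph{every} $\gamma\in\Gamma$. Consequently every face $\mc F$ of $\Gamma$ contains some $\gamma$ with $A(\gamma)\succ 0$ and is thus a definite face in the sense of Definition~\ref{def:definiteFace}; $\Gamma$ has no semidefinite faces. The hypothesis of Theorem~\ref{thm:conv_hull_main} — the dimension inequality $\dim(\mc V(\mc F))\geq \aff\dim(\{b(\gamma):\gamma\in\mc F\})+1$ quantified over all semidefinite faces — is therefore vacuously true, and Theorem~\ref{thm:conv_hull_main} yields $\conv(\mc D) = \mc D_\textup{SDP}$ and $\Opt = \Opt_\textup{SDP}$.

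There is no real obstacle here. The only points requiring a touch of care are (i) that $m_E = 0$ is precisely what lets one absorb the spectral constraint into the sign constraints, so that $\Gamma$ is all of $\R^m_+$ rather than a proper polyhedral subset, and (ii) that the argument could equally well bypass Theorem~\ref{thm:conv_hull_main}: since every face of $\Gamma$ is definite, Lemma~\ref{lem:F_contains_definite} shows directly that every $(\hat x,\hat t)\in\mc D_\textup{SDP}$ lies in $\mc D$, whence $\mc D = \mc D_\textup{SDP}$ is convex and both claimed identities follow at once.
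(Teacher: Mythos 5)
Your proposal is correct and follows essentially the same route as the paper: both compute $\Gamma=\R^m_+$ (hence polyhedral) and observe that $\Gamma$ has no semidefinite faces, so the convex hull theorem applies vacuously; the paper cites Theorem~\ref{thm:conv_hull_symmetries} and argues via the origin lying in every face of $\R^m_+$ with $A(0)=A_0\succ 0$, while you cite Theorem~\ref{thm:conv_hull_main} and note that $A(\gamma)\succ 0$ at every $\gamma\in\Gamma$ --- an inessential difference.
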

\begin{proof}
Assumption~\ref{as:gamma_polyhedral} holds in this case as
\begin{align*}
\Gamma &= \set{\gamma\in\R^m:\, \begin{array}
	{l}
	A(\gamma)\succeq 0\\
	\gamma\geq 0
\end{array}} = \set{\gamma\in\R^m:\, \gamma\geq 0}.
\end{align*}
Furthermore, each face of $\Gamma$ contains the origin. Thus noting that $A(0)=A_0\succ 0$ is positive definite, we conclude that $\Gamma$ does not have any semidefinite face. This allows us to apply Theorem~\ref{thm:conv_hull_symmetries}.
\end{proof}

\begin{remark}
It is possible to apply a standard limit argument (see for example \cite{burer2019exact}) to handle additionally the case where $A_0$ is only positive semidefinite.\mathprog{\qed}
\end{remark}

Next, we discuss a number of results on TRS and GTRS.
\begin{corollary}\label{cor:m=1}
Suppose $m = 1$ and Assumption~\ref{as:gamma_definite} holds. Then,
\begin{align*}
\conv(\mc D) = \mc D_\textup{SDP}
\quad\text{and}\quad
\Opt = \Opt_\textup{SDP}.
\end{align*}
\end{corollary}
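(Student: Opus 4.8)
The plan is to check the hypotheses of Theorem~\ref{thm:conv_hull_main} and invoke it. Since $m=1$, the dual object
\[
\Gamma = \set{\gamma\in\R :\, A_0+\gamma A_1\succeq 0,\ \gamma_i\geq 0\ \forall i\in\intset{m_I}}
\]
is a subset of the real line cut out by the closed half-lines $\set{\gamma:\, v^\top A_0 v + \gamma\, v^\top A_1 v\geq 0}$, $v\in\R^N$, together with at most one constraint $\gamma\geq 0$. Hence $\Gamma$ is a closed interval (possibly a single point, a half-line, or all of $\R$), in particular polyhedral, so Assumption~\ref{as:gamma_polyhedral} holds --- and therefore Assumption~\ref{as:cF_well_defined} holds and the framework of Section~\ref{sec:framework} applies.

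Next I would identify the face structure of $\Gamma$. By Assumption~\ref{as:gamma_definite}, the vector $\gamma^*$ belongs to $\Gamma$ and satisfies $A(\gamma^*)\succ 0$; since $\Gamma$ is a face of itself, this shows $\Gamma$ is a \emph{definite} face. As a definite face is never semidefinite, any semidefinite face $\mc F$ of $\Gamma$ is a \emph{proper} face of the interval $\Gamma\subseteq\R$, and the proper faces of a closed interval in $\R$ are exactly its (one or two) endpoints. Thus every semidefinite face has the form $\mc F=\set{\gamma_0}$ with $A(\gamma_0)\succeq 0$ singular.

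For such a face, $\set{b(\gamma):\,\gamma\in\mc F}=\set{b(\gamma_0)}$ is a single point, so $\aff\dim(\set{b(\gamma):\,\gamma\in\mc F})=0$; on the other hand, Lemma~\ref{lem:semidefinite_shares_zero} gives $\mc V(\mc F)\cap\mb{S}^{N-1}\neq\emptyset$, hence $\dim(\mc V(\mc F))\geq 1$. Therefore $\dim(\mc V(\mc F))\geq 1=\aff\dim(\set{b(\gamma):\,\gamma\in\mc F})+1$ for every semidefinite face, which is precisely the hypothesis of Theorem~\ref{thm:conv_hull_main}; the desired conclusion $\conv(\mc D)=\mc D_\textup{SDP}$ and $\Opt=\Opt_\textup{SDP}$ follows. (One could equivalently apply Theorem~\ref{thm:conv_hull_symmetries}, since the quadratic eigenvalue multiplicity always satisfies $k\geq 1=\aff\dim(\set{b(\gamma):\,\gamma\in\mc F})+1$ on each semidefinite face.) There is no genuinely hard step here; the only points requiring care are the enumeration of faces of a one-dimensional $\Gamma$ --- including the degenerate cases where $\Gamma$ is a single point, a half-line, or all of $\R$ --- and the observation, immediate from Assumption~\ref{as:gamma_definite}, that $\Gamma$ itself is definite, so no semidefinite face of positive dimension can occur.
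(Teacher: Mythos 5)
Your proof is correct and follows essentially the same route as the paper: note that $\Gamma\subseteq\R$ is a (possibly unbounded or degenerate) closed interval, hence polyhedral, argue that every semidefinite face is zero-dimensional so that $\aff\dim(\set{b(\gamma):\,\gamma\in\mc F})=0$, and invoke Theorem~\ref{thm:conv_hull_main}/Theorem~\ref{thm:conv_hull_symmetries}. The only cosmetic difference is that the paper gets the zero-dimensionality of semidefinite faces from Observation~\ref{obs:aff_dim_m_is_definite} and applies Theorem~\ref{thm:conv_hull_symmetries} with $k\geq 1$, whereas you argue it directly ($\Gamma$ itself is definite, proper faces of an interval are endpoints) and use Lemma~\ref{lem:semidefinite_shares_zero} to verify the hypothesis of Theorem~\ref{thm:conv_hull_main}; these are equivalent in substance.
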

\begin{proof}
The set $\Gamma$ will either be a bounded interval $[\gamma_1,\gamma_2]$, a semi-infinite interval $[\gamma_1,\infty)$, or the entire line $(-\infty,\infty)$. In all three cases, $\Gamma$ is polyhedral and Assumption~\ref{as:gamma_polyhedral} holds.

By Observation~\ref{obs:aff_dim_m_is_definite}, any semidefinite face of $\Gamma$ must have affine dimension at most $m - 1 = 0$.
In particular $\aff\dim(\set{b(\gamma):\, \gamma\in\mc F})=0$ and the assumption on the quadratic eigenvalue multiplicity in Theorem~\ref{thm:conv_hull_symmetries} holds as $k$ is always at least $1$.
\end{proof}

Corollary~\ref{cor:m=1} in particular recovers the well-known results associated with the epigraph set of the TRS\footnote{
Corollary~\ref{cor:m=1} fails to fully recover
\cite[Theorem 13]{hoNguyen2017second}. Indeed, \cite[Theorem 13]{hoNguyen2017second} also gives a description of the convex hull of the epigraph of the TRS with an additional conic constraint under some assumptions. We do not consider these additional conic constraints in our setup.
}
and the GTRS (see \cite[Theorem 13]{hoNguyen2017second} and \cite[Theorems 1 and 2]{wang2019generalized}).

\begin{corollary}\label{cor:b_i=0}
Suppose Assumptions~\ref{as:gamma_definite}~and~\ref{as:gamma_polyhedral} hold. If $b_i= 0$ for all $i\in\intset{m}$, then
\begin{align*}
\conv(\mc D) = \mc D_\textup{SDP}
\quad{and}\quad
\Opt &= \Opt_\textup{SDP}.
\end{align*}
\end{corollary}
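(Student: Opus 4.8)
The plan is to deduce this corollary directly from Theorem~\ref{thm:conv_hull_main}, so the only thing to check is the hypothesis on semidefinite faces. Since the second identity follows from the first, it again suffices to verify $\conv(\mc D) = \mc D_\textup{SDP}$, and for that we just need: for every semidefinite face $\mc F$ of $\Gamma$,
\[
\dim(\mc V(\mc F)) \geq \aff\dim(\set{b(\gamma):\,\gamma\in\mc F}) + 1.
\]

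First I would observe that under the hypothesis $b_i = 0$ for all $i\in\intset{m}$, the affine map $\gamma\mapsto b(\gamma) = b_0 + \sum_{i=1}^m \gamma_i b_i$ collapses to the constant $b(\gamma) \equiv b_0$. Hence for \emph{any} face $\mc F$ of $\Gamma$ (semidefinite or not), the set $\set{b(\gamma):\,\gamma\in\mc F} = \set{b_0}$ is a singleton and its affine dimension is $0$. So the required inequality reduces to $\dim(\mc V(\mc F)) \geq 1$ for every semidefinite face $\mc F$.

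Next I would invoke Lemma~\ref{lem:semidefinite_shares_zero}, which states precisely that $\mc V(\mc F)\cap\mb S^{N-1}$ is nonempty whenever $\mc F$ is a semidefinite face; in particular $\dim(\mc V(\mc F))\geq 1$. This establishes the hypothesis of Theorem~\ref{thm:conv_hull_main}, and applying that theorem yields $\conv(\mc D) = \mc D_\textup{SDP}$ and $\Opt = \Opt_\textup{SDP}$, completing the proof.

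There is essentially no main obstacle here: the content of the corollary is that the ``$b$-part'' of the obstruction in Theorem~\ref{thm:conv_hull_main} vanishes entirely when the constraint linear terms are absent, so the generic lower bound $\dim(\mc V(\mc F))\geq 1$ from Lemma~\ref{lem:semidefinite_shares_zero} already suffices. The one point worth stating carefully in the write-up is that $\intset{m} = \set{1,\dots,m}$ excludes the objective index $0$, so $b_0$ is unconstrained and the argument only uses that $b(\gamma)$ is \emph{constant} on $\Gamma$, not that it is zero.
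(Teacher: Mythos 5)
Your proof is correct and takes essentially the same approach as the paper: the key observation in both is that $b(\gamma)\equiv b_0$ under the hypothesis, so $\aff\dim(\set{b(\gamma):\,\gamma\in\mc F})+1=1$ for every face, and the needed lower bound of $1$ holds trivially. The only cosmetic difference is that the paper invokes Theorem~\ref{thm:conv_hull_symmetries} via $k\geq 1$, while you apply Theorem~\ref{thm:conv_hull_main} directly through Lemma~\ref{lem:semidefinite_shares_zero}; these are equivalent, since Theorem~\ref{thm:conv_hull_symmetries} is itself deduced from Theorem~\ref{thm:conv_hull_main} by that same lemma (via Lemma~\ref{lem:V_F_large}).
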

\begin{proof}
Note that $b(\gamma) = b_0 + \sum_{i=1}^m \gamma_i b_i = b_0$ for any $\gamma\in\R^m$. Thus, for any face $\mc F$ of $\Gamma$, we have
\begin{align*}
\aff\dim(\set{b(\gamma):\, \gamma\in\mc F}) + 1 = \aff\dim(\set{b_0})+1 = 1.
\end{align*}
In particular, the assumptions on the quadratic eigenvalue multiplicity in Theorem~\ref{thm:conv_hull_symmetries} holds as $k$ is always at least $1$.
\end{proof}

\begin{example}
\label{ex:b_i=0}
Consider the following optimization problem
\begin{align*}
\inf_{x\in\R^2} \set{x_1^2 +x_2^2 +10x_1 :\, \begin{array}
	{l}
	x_1^2 - x_2^2 -5 \leq 0\\
	-x_1^2 + x_2^2 -50 \leq 0
\end{array}}.
\end{align*}
We check that the conditions of Corollary~\ref{cor:b_i=0} hold. Assumption~\ref{as:gamma_definite} holds as $A(0) = A_0 = I\succ 0$ and $x= 0$ is feasible. Next, Assumption~\ref{as:gamma_polyhedral} holds as
\begin{align*}
\Gamma &= \set{\gamma\in\R^2:\, \begin{array}
	{l}
	1+\gamma_1-\gamma_2\geq 0\\
	1-\gamma_1+\gamma_2\geq 0\\
	\gamma\geq 0
\end{array}}.
\end{align*}
One can verify that
\begin{align*}
\Gamma = \conv\left(\set{(0,0), (1,0), (0,1)}\right) + \cone(\set{1,1}).
\end{align*}
Finally, we note that $b_1 =b_2=0$. Hence, Corollary~\ref{cor:b_i=0} and Remark~\ref{rem:soc_representability} imply that
\begin{align*}
\conv(\mc D) = \mc D_\textup{SDP} = \set{(x,t) :\, \begin{array}
	{l}
	x_1^2+x_2^2 +10 x_1\leq 2t\\
	2x_1^2 +10x_1 -5 \leq 2t\\
	2x_2^2 +10x_1 - 50 \leq 2t
\end{array}}.
\end{align*}
We plot $\mc D$ and $\conv(\mc D)=\mc D_\textup{SDP}$ in Figure~\ref{fig:example_convex_hull}.\mathprog{\qed}
\end{example}
\begin{figure}
  \centering
    \includegraphics[width=0.4\textwidth]{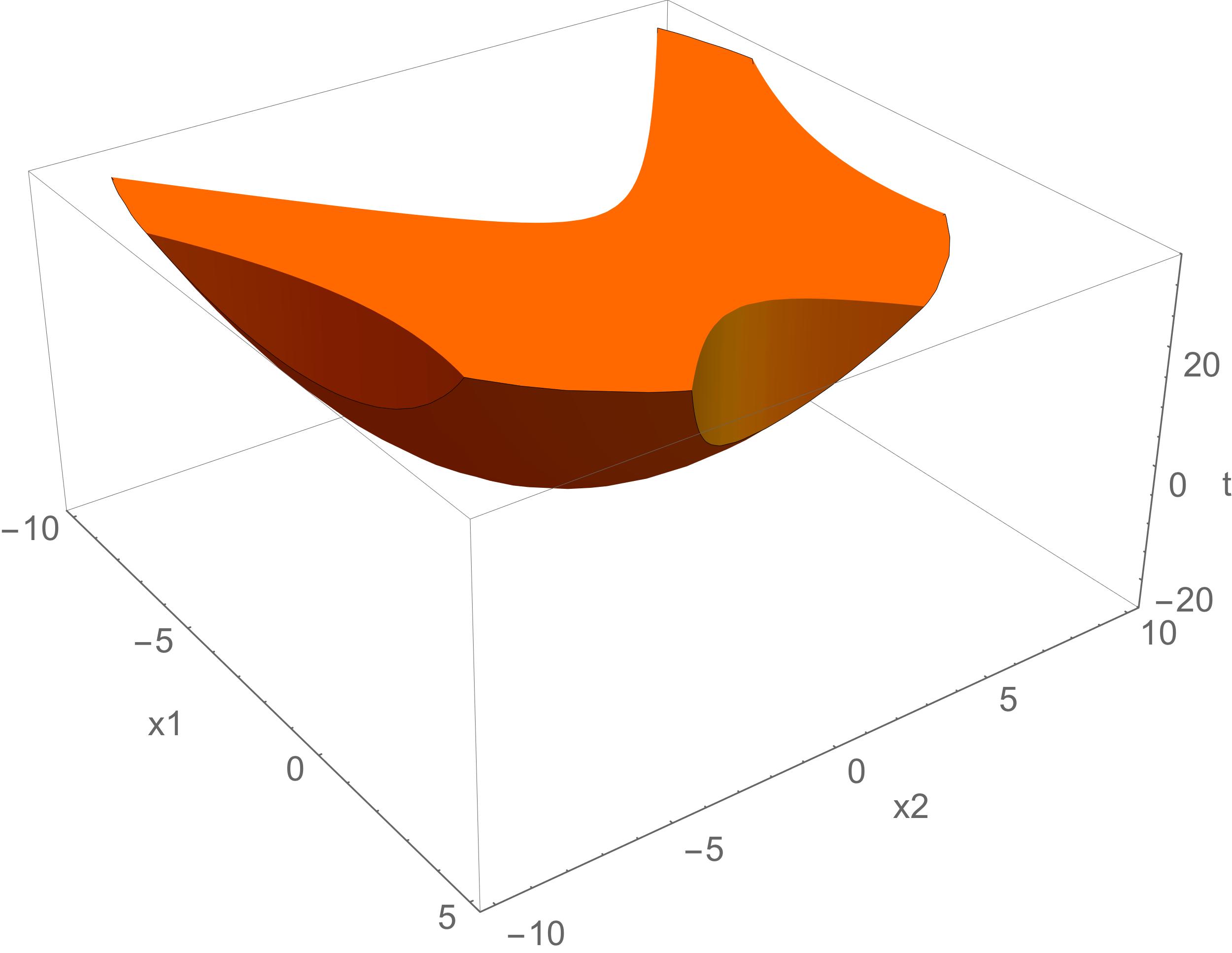}\qquad
    \includegraphics[width=0.4\textwidth]{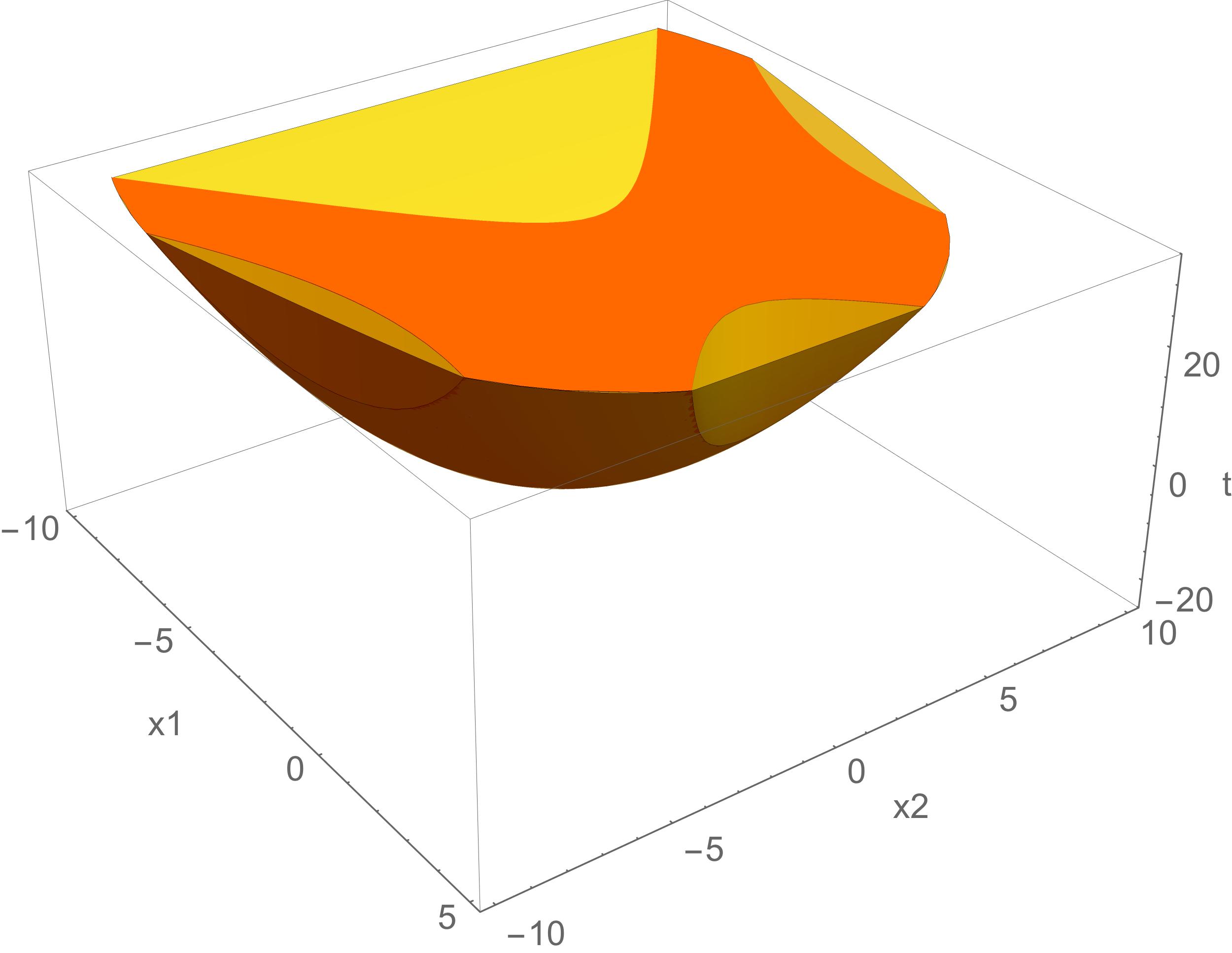}
	\caption{The sets $\mc D$ (in orange) and $\conv(\mc D)$ (in yellow) from Example~\ref{ex:b_i=0}}
	\label{fig:example_convex_hull}
\end{figure}

\begin{remark}[Joint zero of a finite set of quadratic forms]\label{ex:Barvinok}
\citet{barvinok1993feasibility} shows that one can decide in polynomial time (in $N$) whether a constant number, $m_E$, of quadratic forms $\set{A_i}_{i\in\intset{m_E}}$ has a joint nontrivial zero. That is, whether the system $x^\top A_i x=0$ for $i\in\intset{m_E}$ and $x^\top x = 1$ is feasible. We can recast this as asking whether the following optimization problem
\begin{align*}
\min_x\set{-x^\top x:~  \begin{array}
	{l}
	x^\top x\leq 1\\
	x^\top A_i x=0,\forall i\in\intset{m_E}
\end{array}}
\end{align*}
has objective value $-1$ or $0$.

Thus, the feasibility problem studied in \cite{barvinok1993feasibility} reduces to a QCQP of the form we study in this paper.
Note that Assumption~\ref{as:gamma_definite} for a QCQP of this form holds, for example, by taking $\gamma^* = 2 e_1$ so that $A(\gamma^*) = -I + 2I\succ 0$ and noting that $x = 0$ is a feasible solution to this QCQP.
Then when $\Gamma$ is polyhedral (Assumption~\ref{as:gamma_polyhedral}), Corollary~\ref{cor:b_i=0} implies that the feasibility problem (in even a variable number of quadratic forms) can be decided using a semidefinite programming approach.
Nevertheless, Assumption~\ref{as:gamma_polyhedral} may not necessarily hold, and thus Corollary~\ref{cor:b_i=0}  does not recover the full result of \cite{barvinok1993feasibility}.\mathprog{\qed}
\end{remark}

\begin{corollary}
\label{cor:identity_matrices}
Suppose Assumption~\ref{as:gamma_definite} holds and for every $i\in\intset{0,m}$, there exists $\alpha_i$ such that $A_i = \alpha_i I_N$. If $m\leq N$, then
\begin{align*}
\conv(\mc D) = \mc D_\textup{SDP}
\quad{and}\quad
\Opt &= \Opt_\textup{SDP}.
\end{align*}
\end{corollary}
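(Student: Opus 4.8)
The plan is to reduce everything to Theorem~\ref{thm:conv_hull_main}, whose two structural hypotheses I would verify in turn. First I would exploit that $A_i = \alpha_i I_N$ for every $i$, so the aggregated matrix collapses to a scalar multiple of the identity: $A(\gamma) = \ell(\gamma)\, I_N$ with $\ell(\gamma) \coloneqq \alpha_0 + \sum_{i=1}^m \gamma_i \alpha_i$ an affine function of $\gamma$. Hence $A(\gamma)\succeq 0$ if and only if $\ell(\gamma)\geq 0$, so $\Gamma = \set{\gamma : \ell(\gamma)\geq 0,\ \gamma_i\geq 0\ \forall i\in\intset{m_I}}$ is cut out by finitely many linear inequalities; thus Assumption~\ref{as:gamma_polyhedral} holds and the framework of Section~\ref{sec:framework} applies.

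Next I would control the semidefinite faces. If $\alpha_1 = \dots = \alpha_m = 0$, then Assumption~\ref{as:gamma_definite} forces $\alpha_0 > 0$, so $A(\gamma)\succ 0$ for all $\gamma$ and $\Gamma$ has no semidefinite face at all, making the hypothesis of Theorem~\ref{thm:conv_hull_main} vacuous. Otherwise $\ell$ is nonconstant and $H\coloneqq\set{\gamma : \ell(\gamma) = 0}$ is an affine hyperplane of dimension $m-1$ in $\R^m$. If $\mc F$ is a semidefinite face of $\Gamma$, then for every $\gamma\in\mc F$ the matrix $A(\gamma) = \ell(\gamma)I_N$ is positive semidefinite but singular, which forces $\ell(\gamma) = 0$; thus $\mc F\subseteq H$ and $\aff\dim(\mc F)\leq m-1$. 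Moreover $A(\gamma)$ is \emph{identically zero} on $\mc F$, so its shared zero eigenspace is all of $\R^N$, i.e.\ $\dim(\mc V(\mc F)) = N$.

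Finally I would combine these facts. The set $\set{b(\gamma):\gamma\in\mc F}$ is the image of $\mc F$ under the affine map $\gamma\mapsto b(\gamma)$, so its affine dimension is at most $\aff\dim(\mc F)\leq m-1$, and since $m\leq N$ this is at most $N-1$. Therefore $\dim(\mc V(\mc F)) = N \geq \aff\dim(\set{b(\gamma):\gamma\in\mc F}) + 1$ for every semidefinite face $\mc F$, which is exactly the inequality required by Theorem~\ref{thm:conv_hull_main}; that theorem then yields $\conv(\mc D) = \mc D_\textup{SDP}$ and $\Opt = \Opt_\textup{SDP}$. (Equivalently, $A_i = I_N\otimes(\alpha_i)$ makes the quadratic eigenvalue multiplicity equal to $N$, so one could instead invoke Theorem~\ref{thm:conv_hull_symmetries}.) There is no genuinely hard step here: the only points requiring care are isolating the degenerate subcase $\alpha_1=\dots=\alpha_m=0$, and noticing that the hypothesis $m\leq N$ is used in exactly one place — to compare $\aff\dim(\mc F)\leq m-1$ against $\dim(\mc V(\mc F)) = N$.
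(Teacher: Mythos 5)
Your proposal is correct and follows essentially the same route as the paper: verify that $\Gamma$ is cut out by finitely many linear inequalities (Assumption~\ref{as:gamma_polyhedral}), bound the affine dimension of any semidefinite face by $m-1$, use that the identity structure gives a zero eigenspace of dimension $N\geq m$, and invoke Theorem~\ref{thm:conv_hull_main} (equivalently Theorem~\ref{thm:conv_hull_symmetries} with $k=N$). The only cosmetic differences are that you verify $\aff\dim(\cF)\leq m-1$ and $\dim(\cV(\cF))=N$ by direct computation (including the harmless degenerate subcase $\alpha_1=\dots=\alpha_m=0$), where the paper simply cites Observation~\ref{obs:aff_dim_m_is_definite} and Lemma~\ref{lem:V_F_large}.
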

\begin{proof}
Assumption~\ref{as:gamma_polyhedral} holds in this case as
\begin{align*}
\Gamma &\coloneqq \set{\gamma\in\R^m:\, \begin{array}
	{l}
	A(\gamma)\succeq 0\\
	\gamma_i\geq 0,\,\forall i\in\intset{m_I}
\end{array}}= \set{\gamma\in\R^m:\, \begin{array}
	{l}
	\alpha_0 +\sum_{i=1}^m \gamma_i \alpha_i\geq 0\\
	\gamma_i\geq 0,\,\forall i\in\intset{m_I}
\end{array}}
\end{align*}
is defined by $m_I+1$ linear inequalities.

As each $A_i = \alpha_i I_N$, we have that the quadratic eigenvalue multiplicity satisfies $k = N$. By Observation~\ref{obs:aff_dim_m_is_definite}, any semidefinite face of $\Gamma$ must have affine dimension at most $m - 1$.
In particular $\aff\dim(\set{b(\gamma):\, \gamma\in\mc F}) + 1 \leq m$ and the assumption on the quadratic eigenvalue multiplicity in Theorem~\ref{thm:conv_hull_symmetries} holds as $k= N \geq m$. The final inequality $N\geq m$ holds by the assumptions of the corollary.
\end{proof}

\begin{remark}
Consider the problem of finding the distance between the origin $0\in\R^N$ and a piece of ``Swiss cheese'' $C\subseteq \R^N$. We will assume that $C$ is nonempty and defined as
\begin{align*}
C = \set{x\in\R^N :\, \begin{array}
	{l}
	\norm{x - y_i} \leq s_i ,\,\forall i\in\intset{m_1}\\
	\norm{x - z_i} \geq t_i ,\,\forall i\in\intset{m_2}\\
	\ip{x,b_i} \geq c_i ,\,\forall i\in\intset{m_3}
\end{array}},
\end{align*}
where $y_i, z_i, b_i\in\R^N$ and $s_i,t_i,c_i\in\R$ are arbitrary.
In words, $C$ is defined by $m_1$-many ``inside-ball'' constraints, $m_2$-many ``outside-ball'' constraints, and $m_3$-many linear inequalities. Note that each of these constraints may be written as a quadratic inequality with a quadratic form $I$, $-I$, or $0$. In particular, Corollary~\ref{cor:identity_matrices} implies that if $m_1+m_2+m_3\leq N$, then the value
\begin{align*}
\inf_{x\in\R^N}\set{\norm{x}^2 :\, x\in C}
\end{align*}
may be computed using the standard SDP relaxation of the problem.

\citet{bienstock2014polynomial} give sufficient conditions under which a related problem
\begin{align*}
\inf_{x\in\R^N}\set{q_0(x) :\, x\in C},
\end{align*}
is polynomial-time solvable. Here, $q_0:\R^N\to\R$ is an arbitrary quadratic function but $m_1$ and $m_2$ are constant.
Specifically, they devise an enumerative algorithm for problems of this form and prove its correctness under different assumptions.
In contrast, our work deals only with the standard SDP relaxation and does not assume that the number of quadratic forms is constant.

\citet{yang2018quadratic} consider QCQPs with additional ``hollow'' type constraints. Formally, they consider a QCQP with domain $\cG\coloneqq F\setminus \bigcup_\alpha \inter(E_\alpha)$ where $F$ is a quadratically constrained domain and $\set{E_\alpha}$ is a finite collection of \textit{non-intersecting} ellipsoids completely contained within $F$. They show that if the SDP relaxation for a QCQP over the domain $F$ is exact, then the SDP relaxation strengthened by additional linear constraints is exact for the same QCQP over the domain $\cG$. In contrast, Corollary~\ref{cor:identity_matrices} makes no assumption on how the constraints defining $\cG$ intersect but deals only with linearly many (in the dimension) spherical constraints.\mathprog{\qed}
\end{remark}
 \subsection{SD-QCQPs and the polyhedrality assumption}\label{subsec:socp}

A natural class of QCQPs where Assumption~\ref{as:gamma_polyhedral} is immediately satisfied is the class of simultaneously diagonalizable QCQPs (SD-QCQPs) (see Definition~\ref{def:sd_qcqp} below).
In this section, we first discuss how the simultaneously diagonalizable (SD) assumption relates to the polyhedrality assumption.
Then, in Section~\ref{subsec:socp_rel_SD-QCQPs}, we show that under the SD assumption, the standard SDP relaxation is in fact equivalent to the lifted SOCP relaxation (both in terms of optimal value and projected epigraph).
Consequently, our framework automatically generates sufficient conditions for SOCP-based tightness and convex hull results.
Such sufficient conditions have been studied in the literature and we will compare our conditions with sufficient conditions proposed by~\citet{benTal2014hidden}~and~\citet{locatelli2016exactness} in Section~\ref{subsec:literature_comparison}.

Recall the following definition.
\begin{definition}
A set of matrices $\set{A_i}_{i\in\intset{0,m}}\subseteq \Se^N$ is said to be \emph{simultaneously diagonalizable} (SD) if there exists an invertible matrix $U\in \R^{N\by N}$ such that the set $\big\{U^\top A_i U\big\}_{i\in\intset{0,m}}$ consists of diagonal matrices.
\end{definition}
We note that this condition, sometimes referred to as simultaneously diagonalizable \emph{by congruence}, is weaker than the notion of being simultaneously diagonalizable \emph{by similarity} which further requires that $U$ be an orthonormal matrix.

\begin{definition}
\label{def:sd_qcqp}
A \emph{simultaneously diagonalizable QCQP} (SD-QCQP) is a QCQP of the form~\eqref{eq:qcqp} where $\set{A_i}_{i\in\intset{0,m}}$ is SD.
\end{definition}

\begin{lemma}
\label{lem:SD_implies_polyhedral}
For any SD-QCQP, we have that $\Gamma$ is polyhedral.
\end{lemma}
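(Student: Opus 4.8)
The plan is to push the positive semidefiniteness constraint defining $\Gamma$ through the congruence that simultaneously diagonalizes the $A_i$ and observe that it becomes a finite list of linear inequalities. First I would invoke the SD assumption to fix an invertible matrix $U\in\R^{N\by N}$ such that $U^\top A_i U$ is diagonal for every $i\in\intset{0,m}$; write $d_i\in\R^N$ for its diagonal, so that $U^\top A_i U = \Diag(d_i)$.

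Next I would use the elementary fact that congruence by an invertible matrix preserves the positive semidefinite cone: for any $M\in\Se^N$ and any $v\in\R^N$ we have $v^\top (U^\top M U) v = (Uv)^\top M (Uv)$, and since $U$ is invertible $Uv$ ranges over all of $\R^N$ as $v$ does; hence $M\succeq 0$ if and only if $U^\top M U\succeq 0$. Applying this to $M=A(\gamma)=A_0+\sum_{i=1}^m\gamma_i A_i$ and using linearity, $A(\gamma)\succeq 0$ if and only if
\begin{align*}
U^\top A(\gamma) U = \Diag\!\left(d_0 + \sum_{i=1}^m \gamma_i d_i\right)\succeq 0.
\end{align*}
Because a diagonal matrix is positive semidefinite exactly when all of its diagonal entries are nonnegative, this last condition is equivalent to the $N$ linear inequalities $(d_0)_j + \sum_{i=1}^m \gamma_i (d_i)_j \geq 0$ for $j\in\intset{N}$.

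Finally I would combine this with the sign constraints already present in the definition of $\Gamma$: we conclude that
\begin{align*}
\Gamma = \set{\gamma\in\R^m:\, \begin{array}{l} (d_0)_j + \sum_{i=1}^m \gamma_i (d_i)_j \geq 0,\ \forall j\in\intset{N}\\ \gamma_i\geq 0,\ \forall i\in\intset{m_I}\end{array}},
\end{align*}
which is an intersection of finitely many (namely $N + m_I$) closed halfspaces, hence polyhedral. There is no real obstacle here; the only point requiring a moment's care is that the diagonalization in the SD definition is by \emph{congruence} rather than similarity, so one should not appeal to an orthogonal change of basis but rather to the direct computation above showing congruence by an invertible matrix preserves PSD-ness.
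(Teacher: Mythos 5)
Your proposal is correct and follows essentially the same route as the paper: conjugate $A(\gamma)$ by the simultaneously diagonalizing invertible $U$, note that congruence by an invertible matrix preserves positive semidefiniteness, and observe that the resulting diagonal PSD condition is a finite set of linear inequalities in $\gamma$, so $\Gamma$ is polyhedral. The paper's proof is just a terser version of your argument, so there is nothing further to add.
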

\begin{proof}
Let $U\in\R^{N\by N}$ be an invertible matrix such that $U^\top A_i U = \Lambda_i$ is diagonal for each $i\in\intset{0,m}$.
Note that $A(\gamma)\succeq 0$ if and only if $U^\top A(\gamma)U \succeq 0$ if and only if $\Lambda_0 + \sum_{i=1}^m \gamma_i \Lambda_i \succeq 0$. It is clear that
\begin{align*}
\Gamma = \set{\gamma\in\R^m:\, \begin{array}
	{l}
	\Lambda_0 + \sum_{i=1}^m \gamma_i\Lambda_i \succeq 0 \\
	\gamma_i\geq 0 ,\,\forall i \in\intset{m_I}
\end{array}}
\end{align*}
is polyhedral.
\end{proof}

The following example shows that changing a given constraint in a QCQP from an inequality into an equality constraint can alter whether $\Gamma$ is polyhedral or not.
As a consequence, we will deduce by Lemma~\ref{lem:SD_implies_polyhedral} that Assumption~\ref{as:gamma_polyhedral} is strictly weaker than the simultaneous diagonalizability assumption. 
\begin{example}
\label{ex:Gamma_polyhedral}
Consider the matrices
\begin{align*}
A_0 = \begin{pmatrix}
	1 & &\\
	& \sqrt{2} & 0\\
	& 0& \sqrt{2}
\end{pmatrix},
\quad
A_1 = \begin{pmatrix}
	-1 &&\\
	&1&1\\
	&1&-1
\end{pmatrix},
\quad
A_2 = \begin{pmatrix}
	-1 &&\\
	&1&-1\\
	&-1 & -1
\end{pmatrix}.
\end{align*}
Note that $A(\gamma)\succeq 0$ if and only if each of its two blocks are positive semidefinite. Recall that a $2\by 2$ matrix is positive semidefinite if and only if both its trace and determinant are nonnegative.

Suppose first that $A_1$ and $A_2$ correspond to equality constraints. Then
\begin{align*}
\Gamma &= \set{\gamma\in\R^2 :\, \begin{array}
	{l}
	1 - \gamma_1 - \gamma_2 \geq 0 \\
	(\sqrt{2}+(\gamma_1 +\gamma_2))(\sqrt{2}-(\gamma_1+\gamma_2))-(\gamma_1-\gamma_2)^2\geq 0
\end{array}}\\
&= \set{\gamma\in\R^2 :\, \begin{array}
	{l}
	\gamma_1+\gamma_2 \leq 1 \\
	2 - (\gamma_1+\gamma_2)^2 -(\gamma_1-\gamma_2)^2 \geq 0
\end{array}}\\
&= \set{\gamma\in\R^2 :\, \begin{array}
	{l}
	\gamma_1+\gamma_2 \leq 1 \\
	\gamma_1^2 + \gamma_2^2 \leq 1
\end{array}}.
\end{align*}
is not polyhedral (see Figure~\ref{fig:example_Gamma_polyhedral} left). In particular by Lemma~\ref{lem:SD_implies_polyhedral}, we deduce that the set $\set{A_0,A_1,A_2}$ is not simultaneously diagonalizable.

Now suppose that $A_1$ and $A_2$ correspond to inequality constraints. Then
\begin{align*}
\Gamma &= \set{\gamma\in\R^2 :\, \begin{array}
	{l}
	\gamma_1+\gamma_2 \leq 1 \\
	\gamma_1^2 + \gamma_2^2 \leq 1\\
	\gamma\geq 0
\end{array}} = \set{\gamma\in\R^2 :\, \begin{array}
	{l}
	\gamma_1+\gamma_2 \leq 1 \\
	\gamma\geq 0
\end{array}}
\end{align*}
is polyhedral (see Figure~\ref{fig:example_Gamma_polyhedral} right). Thus, we have constructed an example where the set $\set{A_0, A_1, A_2}$ is not simultaneously diagonalizable but $\Gamma$ is polyhedral. We deduce that Assumption~\ref{as:gamma_polyhedral} is strictly weaker than the simultaneous diagonalizability assumption.\mathprog{\qed}
\end{example}

\begin{figure}
 	\centering
\begin{tikzpicture}
	\begin{scope}
		\clip (1,0) -- (0,1) -- (-1,1) -- (-1,-1) -- (1,-1) -- cycle;
		\fill [fill = orange] (0,0) circle (1cm);
		\draw (0,0) circle (1cm-0.5\pgflinewidth);
	\end{scope}
	\draw[-] (0,1cm-0.5\pgflinewidth) -- (1cm-0.5\pgflinewidth,0);

	\draw[<->] (-1.5,0) -- (1.5,0);
	\draw[<->] (0,-1.5) -- (0,1.5);

	\node[right] (g1) at (1.5,0) {$\gamma_1$};
	\node[above] (g2) at (0,1.5) {$\gamma_2$};
\end{tikzpicture}
\qquad
\begin{tikzpicture}
	\fill [fill = yellow] (0,0) -- (1,0) -- (0,1) -- cycle;
	\draw[-] (0,1cm-0.5\pgflinewidth) -- (1cm-0.5\pgflinewidth,0);

	\draw[<->] (-1.5,0) -- (1.5,0);
	\draw[<->] (0,-1.5) -- (0,1.5);

	\node[right] (g1) at (1.5,0) {$\gamma_1$};
	\node[above] (g2) at (0,1.5) {$\gamma_2$};
\end{tikzpicture}
\caption{The set $\Gamma$ with equality (orange) and inequality (yellow) constraints from Example~\ref{ex:Gamma_polyhedral}}
	\label{fig:example_Gamma_polyhedral}
\end{figure}

\begin{remark}\label{rem:polyhedral_not_checkable}
\citet{ramana1997polyhedra} showed that deciding whether a given spectrahedron is polyhedral is coNP-hard. In particular, it is coNP-hard to decide whether Assumption~\ref{as:gamma_polyhedral} holds in general. 
Nevertheless, it is possible to prove that this assumption holds for specific classes of interesting QCQPs (for example see Corollaries~\ref{cor:m=1}~and~\ref{cor:identity_matrices}).\mathprog{\qed}
\end{remark}

\subsubsection{The equivalence of SDP and SOCP relaxations of SD-QCQPs}
\label{subsec:socp_rel_SD-QCQPs}
Given an SD-QCQP and the invertible matrix $U$, we may perform a change of variables to arrive at a diagonal QCQP, i.e., a QCQP of the form~\eqref{eq:qcqp} where each $A_i$ is diagonal. In the remainder of this section, we assume that we have already made this change of variables and are left with
\begin{align}
\label{eq:diagonal_qcqp}
\inf_{x\in\R^N} \set{q_0(x) :\, \begin{array}
	{l}
	q_i(x) \leq 0 ,\,\forall i\in\intset{m_I}\\
	q_i(x) = 0,\,\forall i\in\intset{m_I +1, m}
\end{array}},
\end{align}
where $q_i(x) = \ip{a_i, x^2} + 2 \ip{b_i,x} + c_i$, $a_i\in\R^N$, $b_i\in\R^N$, and $c_i\in \R$ for each $i\in\intset{0,m}$.  Here, $x^2\in\R^N$ denotes the vector with $(x^2)_j = (x_j)^2$ for all $j\in\intset{N}$.

\citet{benTal2014hidden} and \citet{locatelli2016exactness} study  the following SOCP relaxation
\begin{align}
\label{eq:socp}
\Opt_{\textup{SOCP}}\coloneqq \inf_{x\in\R^N,\,y\in\R^N} \set{\ip{a_0, y} + 2\ip{b_0, x} + c_0 :\, \begin{array}
	{l}
	\ip{a_i, y} + 2\ip{b_i, x} + c_i \leq 0,\\
	\hspace{8em}\forall i\in\intset{m_I}\\
	\ip{a_i, y} + 2\ip{b_i, x} + c_i  = 0,\\
	\hspace{5em}\forall i\in\intset{m_I+1,m}\\
	y_j\geq x_j^2,\,\forall j\in\intset{N}
\end{array}}.
\end{align}
Let $\mc D_\textup{SOCP}$ denote the epigraph of \eqref{eq:socp} projected onto the $(x,t)$ variables, i.e., define
\begin{align}
\label{eq:socp_epi}
\mc D_\textup{SOCP} \coloneqq \set{(x,t)\in\R^{N+1} :\, \begin{array}
	{l}
	\exists y\in\R^N:\\
	\ip{a_0, y} + 2\ip{b_0, x} + c_0 \leq 2t\\
	\ip{a_i, y} + 2\ip{b_i, x} + c_i \leq 0 ,\,\forall i\in\intset{m_I}\\
	\ip{a_i, y} + 2\ip{b_i, x} + c_i  = 0 ,\,\forall i\in\intset{m_I+1,m}\\
	y_j\geq x_j^2,\,\forall j\in\intset{N}
\end{array}}.
\end{align}

The following proposition states that the SDP and the SOCP relaxations are equivalent for both the convex hull question and the tightness question. In particular, we may apply the sufficient conditions of this paper for either result directly to the SOCP relaxation as well.
\begin{restatable}
	{proposition}{socpsdpequivalence}
	\label{prop:SOCP_SDP_equivalence}
For any SD-QCQP, we have
\begin{align*}
\mc D_\textup{SOCP} = \mc D_\textup{SDP} \quad\text{and}\quad \Opt_\textup{SOCP} = \Opt_\textup{SDP}.
\end{align*}
\end{restatable}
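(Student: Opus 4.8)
The plan is to prove the two equalities $\mc D_\textup{SOCP} = \mc D_\textup{SDP}$ and $\Opt_\textup{SOCP} = \Opt_\textup{SDP}$ by relating the two lifted feasible sets directly, exploiting the fact that all data matrices $A_i$ are diagonal. Since $\Opt_\textup{SDP} = \inf\set{2t : (x,t)\in\mc D_\textup{SDP}}$ and analogously for SOCP, the second identity follows immediately from the first, so it suffices to show $\mc D_\textup{SOCP} = \mc D_\textup{SDP}$.

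The easy inclusion is $\mc D_\textup{SDP}\subseteq\mc D_\textup{SOCP}$. Given $(x,t)\in\mc D_\textup{SDP}$ with witness $Y = \left(\begin{smallmatrix} 1 & x^\top \\ x & X\end{smallmatrix}\right)\succeq 0$, I would set $y \coloneqq \diag(X)$, the vector of diagonal entries of $X$. Since $A_i$ is diagonal with diagonal $a_i$, we have $\ip{Q_i, Y} = \ip{a_i, \diag(X)} + 2\ip{b_i, x} + c_i = \ip{a_i, y} + 2\ip{b_i, x} + c_i$, so all the linear (in)equalities transfer verbatim; and $Y\succeq 0$ forces each principal $2\times 2$ minor $\left(\begin{smallmatrix} 1 & x_j \\ x_j & X_{jj}\end{smallmatrix}\right)\succeq 0$, i.e. $y_j = X_{jj}\geq x_j^2$. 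Hence $(x,y)$ is feasible for the SOCP lift with the same objective value $2t$, giving $(x,t)\in\mc D_\textup{SOCP}$.

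For the reverse inclusion $\mc D_\textup{SOCP}\subseteq\mc D_\textup{SDP}$, given $(x,t)\in\mc D_\textup{SOCP}$ with witness $y$ satisfying $y_j\geq x_j^2$, I would construct the matrix $X \coloneqq xx^\top + \Diag(y - x^2)$, where $x^2$ is the entrywise square. Then $\diag(X) = x^2 + (y - x^2) = y$, so again $\ip{Q_i,Y} = \ip{a_i,y} + 2\ip{b_i,x} + c_i$ for the diagonal $A_i$, and all constraints transfer. It remains to check $Y = \left(\begin{smallmatrix} 1 & x^\top \\ x & X\end{smallmatrix}\right)\succeq 0$: write $Y = \left(\begin{smallmatrix} 1 & x^\top \\ x & xx^\top\end{smallmatrix}\right) + \left(\begin{smallmatrix} 0 & 0 \\ 0 & \Diag(y-x^2)\end{smallmatrix}\right)$; the first summand equals $\left(\begin{smallmatrix} 1 \\ x\end{smallmatrix}\right)\left(\begin{smallmatrix} 1 \\ x\end{smallmatrix}\right)^\top\succeq 0$, and the second is PSD because $y_j - x_j^2\geq 0$ for all $j$. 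Thus $Y\succeq 0$ and $(x,t)\in\mc D_\textup{SDP}$ with the same objective value.

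I do not anticipate a serious obstacle here; the only point requiring mild care is bookkeeping the change of variables in Section~\ref{subsec:socp_rel_SD-QCQPs} (the SD-QCQP has already been reduced to a diagonal QCQP via the congruence $U$), so that ``$A_i$ diagonal'' can be used freely, and noting that the proposition as stated is really about the diagonal reduction — the claim for a general SD-QCQP then follows from the affine-invariance observation in Remark~\ref{rem:affine_transformation} applied to the congruence transformation. One should also remark that the objective rows are handled identically to the constraint rows since $Q_0$ enters $\mc D_\textup{SDP}$ only through $\ip{Q_0,Y}\leq 2t$, which matches $\ip{a_0,y} + 2\ip{b_0,x} + c_0\leq 2t$ under either construction.
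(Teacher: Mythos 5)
Your proposal is correct and follows essentially the same route as the paper's proof: the same choice $y=\diag(X)$ for one inclusion (the paper argues $X\succeq xx^\top$ via a Schur complement rather than $2\times 2$ minors, which is equivalent) and the same completion $X_{jj}=y_j$, $X_{jk}=x_jx_k$ for the other, with the identical PSD decomposition. The only addition is your explicit remark on the diagonal reduction via $U$, which the paper handles implicitly by stating the SOCP for the already-diagonalized problem.
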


The second identity in Proposition~\ref{prop:SOCP_SDP_equivalence} was first recorded by \citet{locatelli2016exactness}. The first identity, while straightforward, is to the best of our knowledge not present in the literature prior to our work. The proof of this result is deferred to Appendix~\ref{ap:proof_prop_socp_sdp}.

\begin{remark}
	\label{rem:socp-contrast}
	Remark~\ref{rem:soc_representability} implies that for any SD-QCQP satisfying Assumption~\ref{as:gamma_definite}, the set $\mc D_\textup{SDP}$ is SOC representable in the original space. However, this representation may potentially involve exponentially many quadratics---this follows as $\Gamma$ may have exponentially many extreme points and rays. Moreover, identifying these extreme points and rays may require non-trivial computational effort. In contrast, Proposition~\ref{prop:SOCP_SDP_equivalence} implies that $\cD_\textup{SDP}=\cD_\textup{SOCP}$ is SOCP representable in a lifted space (with only $N$ new variables) using only linearly many convex quadratic constraints.
	Consequently, the $\mc D_\textup{SOCP}$ representation is perhaps more interesting from a computational view.\mathprog{\qed}
\end{remark}

 \subsection{On the sharpness of Theorems~\ref{thm:conv_hull_main}~and~\ref{thm:conv_hull_symmetries}}
\label{subsec:sharpness}

In this section we construct QCQPs that show that the assumptions made in Theorem~\ref{thm:conv_hull_symmetries} (and hence in Theorem~\ref{thm:conv_hull_main}) cannot be weakened individually.

We first examine the quadratic eigenvalue multiplicity assumption in Theorems~\ref{thm:conv_hull_main}~and~\ref{thm:conv_hull_symmetries}, and show that both of these theorems break when the assumption on the lower bound on the value of the quadratic eigenvalue multiplicity $k$,
\begin{align*}
k\geq \aff\dim(\set{b(\gamma):\, \gamma\in\mc F})+1
\end{align*}
is relaxed to $k\geq \aff\dim(\set{b(\gamma):\, \gamma\in\mc F})$.
 
\begin{proposition}
\label{prop:aff_dim_sharp_example}
For any positive integers $n$ and $k$, there exists a QCQP in $N\coloneqq nk$ variables with $m\coloneqq k+1$ constraints such that
\begin{itemize}
	\item Assumptions~\ref{as:gamma_definite}~and~\ref{as:gamma_polyhedral} are satisfied,
	\item the quadratic eigenvalue multiplicity of the QCQP is $k$, and
	\item $k$ satisfies
\begin{align*}
k\geq \aff\dim(\set{b(\gamma):\, \gamma\in\mc F})
\end{align*}
for all semidefinite faces $\mc F$ of $\Gamma$, but
\item $\Opt \neq \Opt_\textup{SDP}$ (and hence $\conv(\mc D) \neq \mc D_\textup{SDP}$).
\end{itemize}
\end{proposition}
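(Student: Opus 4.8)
The plan is to exhibit the example explicitly, then check the four bulleted properties directly. Fix affinely independent vectors $\beta_1,\dots,\beta_{k+1}\in\R^k$ whose convex hull contains the origin in its interior (e.g.\ $\beta_i = e_i - \tfrac{1}{k+1}\mones$ for $i\in\intset{k}$ and $\beta_{k+1} = -\tfrac{1}{k+1}\mones$). Writing a generic point of $\R^{nk}$ as $x = (z,w)$, where $z\in\R^k$ collects one coordinate from each of $k$ blocks of size $n$ and $w\in\R^{(n-1)k}$ collects the remaining coordinates, I would take the QCQP
\begin{align*}
\inf_{(z,w)\in\R^{nk}}\set{-\norm{z}^2 + \norm{w}^2 :\, \norm{z}^2 + 2\ip{\beta_i,z}\leq 1,\ \forall i\in\intset{k+1}}.
\end{align*}
Thus $m = m_I = k+1$ and $m_E = 0$. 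In the block ordering $x=(y^{(1)},\dots,y^{(k)})$ with $z_\ell = y^{(\ell)}_1$, the quadratic forms are $A_0 = I_k\otimes\Diag(-1,1,\dots,1)$ and $A_i = I_k\otimes\Diag(1,0,\dots,0)$ for $i\in\intset{k+1}$, while $b_0=0$, $c_0=0$, each $b_i$ is supported on the coordinates of $z$ with $z$-part $\beta_i$, and $c_i=-1$.

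Next I would check the structural requirements. The point $(z,w)=(0,0)$ is feasible, and $\gamma^*=2e_1$ yields $A(\gamma^*) = I_k\otimes\Diag(1,1,\dots,1) = I_{nk}\succ 0$, so Assumption~\ref{as:gamma_definite} holds. Since $A(\gamma) = I_k\otimes\Diag(-1+\sum_i\gamma_i,\,1,\dots,1)$, we obtain $\Gamma = \set{\gamma\in\R^{k+1}:\, \gamma\geq 0,\ \sum_{i=1}^{k+1}\gamma_i\geq 1}$, which is polyhedral, so Assumption~\ref{as:gamma_polyhedral} holds (and hence so does Assumption~\ref{as:cF_well_defined}). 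The multiplicity is at least $k$ by construction, and it is not larger because $\Diag(-1,1,\dots,1)$ cannot be written as $I_j\otimes\mathcal B$ for any $j>1$ dividing $n$ (the first diagonal block of such a decomposition would have to equal both $\Diag(-1,1,\dots,1)$ and $\Diag(1,\dots,1)$), so the multiplicity equals $k$ exactly. Finally, a face of $\Gamma$ is semidefinite iff it lies in $\set{\sum_i\gamma_i = 1}$, i.e.\ iff it is a face of the simplex $\mc F_{\max}\coloneqq\set{\gamma\geq 0:\sum_i\gamma_i=1}$; for the face indexed by $I\subseteq\intset{k+1}$ one has $\set{b(\gamma):\gamma\in\mc F} = \conv\set{\beta_i:i\in I}\times\set{0}$, of affine dimension $|I|-1\leq k$, with equality only for $\mc F=\mc F_{\max}$. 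Hence $k\geq\aff\dim(\set{b(\gamma):\gamma\in\mc F})$ for every semidefinite face, and this inequality is tight.

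It then remains to separate the two optimal values. For the SDP bound, given any feasible $Y = \left(\begin{smallmatrix}1 & x^\top\\ x & X\end{smallmatrix}\right)\succeq 0$, let $X_{zz}$ and $X_{ww}$ be the principal submatrices of $X$ indexed by the coordinates of $z$ and of $w$; then $\ip{Q_i,Y}\leq 0$ reads $\Tr(X_{zz})\leq 1-2\ip{\beta_i,z}$, so $\Tr(X_{zz})\leq 1-2\max_i\ip{\beta_i,z}\leq 1$ since $0\in\conv\set{\beta_i}$, and $\ip{Q_0,Y} = -\Tr(X_{zz})+\Tr(X_{ww})\geq -\Tr(X_{zz})\geq -1$; choosing $z=0$, $w=0$, $X_{zz}=\Diag(1,0,\dots,0)$ and all remaining entries of $X$ equal to $0$ attains $-1$, so $\Opt_\textup{SDP} = -1$. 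For the true value, every feasible $x$ satisfies $\norm{z}^2\leq 1-2\max_i\ip{\beta_i,z}$; because $0$ lies in the \emph{interior} of $\conv\set{\beta_i}$ there is $\epsilon_0>0$ with $\max_i\ip{\beta_i,z}\geq\epsilon_0\norm{z}$ for all $z$, forcing $\norm{z}\leq\sqrt{1+\epsilon_0^2}-\epsilon_0 < 1$ and hence $q_0(x) = -\norm{z}^2+\norm{w}^2\geq -\norm{z}^2 > -1$. Therefore $\Opt > -1 = \Opt_\textup{SDP}$, which gives the claimed strict separation (and a fortiori $\conv(\mc D)\neq\mc D_\textup{SDP}$).

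The difficulty here is one of design rather than of any single computation: the example must simultaneously create a genuine SDP gap (the optimal lifted matrix has $X_{zz}\succ zz^\top$), keep $\Gamma$ polyhedral, and have multiplicity exactly $k$ with a semidefinite face on which $\aff\dim(\set{b(\gamma)})$ hits $k$. The subtle ingredient is the $+\norm{w}^2$ term in the objective: the "bare" problem with objective $-\norm{z}^2$ and positive semidefinite constraint forms $A_i$ has $A(\gamma)$ always singular in the $w$-directions, violating the dual strict feasibility in Assumption~\ref{as:gamma_definite}; adding $+\norm{w}^2$ repairs this without affecting the gap, the set $\Gamma$, the face structure, or the multiplicity.
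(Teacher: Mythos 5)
Your construction is correct, but it is genuinely different from the paper's. The paper uses the problem $\min_x\set{-x_1^2-x_{n+1}^2-\dots-x_{(k-1)n+1}^2:\ \norm{x}^2\leq 1,\ x_{(j-1)n+1}=0\ \forall j\in\intset{k}}$, i.e.\ one ball inequality plus $k$ linear \emph{equality} constraints, so that $\Gamma=\set{\gamma:\gamma_1\geq 1}$ and the gap is a one-liner via Lemma~\ref{lemma:sdp_in_terms_of_Gamma}: $\Opt_\textup{SDP}\leq\sup_{\gamma\in\Gamma}q(\gamma,0)=-1$ while $\Opt=0$ because the objective involves only coordinates forced to vanish. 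Your example is inequality-only with a strictly feasible primal, $\Gamma=\set{\gamma\geq 0:\sum_i\gamma_i\geq 1}$ whose semidefinite faces form a simplex, and you evaluate $\Opt_\textup{SDP}=-1$ directly from the Shor relaxation (the lifted trace $\Tr(X_{zz})$ can reach $1$ while $\norm{z}^2$ cannot, thanks to $0$ lying in the interior of $\conv\set{\beta_i}$, which gives the uniform bound $\norm{z}\leq\sqrt{1+\epsilon_0^2}-\epsilon_0<1$ and hence $\Opt\geq-(\sqrt{1+\epsilon_0^2}-\epsilon_0)^2>-1$). The paper's route is shorter; yours buys a sharpness example without equality constraints and with the SDP value pinned exactly, and your closing remark about the $+\norm{w}^2$ term is a correct observation about preserving Assumption~\ref{as:gamma_definite}. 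One small repair: your argument that the multiplicity is not \emph{larger} than $k$ only rules out factorizations $I_j\otimes\mathcal{B}$ of the inner block with $j>1$ dividing $n$, whereas a priori any divisor $k'>k$ of $N=nk$ with $A_i=I_{k'}\otimes\cA_i$ must be excluded; the clean fix is that every eigenvalue of $I_{k'}\otimes\cA_0$ has multiplicity divisible by $k'$, while the eigenvalue $-1$ of your $A_0$ has multiplicity exactly $k$, forcing $k'\mid k$ and hence $k'\leq k$. With that one-line patch, all four bullets are verified and the proof stands.
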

\begin{proof}
Consider the following QCQP
\begin{align}
	\label{eq:polyhedral_lower_bound}
	\min_{x\in\R^N} \set{- x_1^2 -x_{n+1}^2 - \dots - x_{(k-1)n + 1}^2 :\, \begin{array}
		{l}
		\norm{x}^2 - 1 \leq 0\\
		x_{(j-1)n + 1} = 0 ,\,\forall j\in\intset{1,k}
	\end{array}}.
\end{align}
Here, $A_0 = I_k \otimes \left(-e_1e_1^\top\right)$, $A_1 = I$, and $A_i = 0$ for all $i \in\intset{2,m}$.

Assumption~\ref{as:gamma_definite} holds because $A_1 = I\succ 0$ and $x = 0$ is feasible in \eqref{eq:polyhedral_lower_bound}. Moreover,  Assumption~\ref{as:gamma_polyhedral} holds because 
\begin{align*}
\Gamma \coloneqq \set{\gamma\in\R^m:\, \gamma_1\geq 0 ,\, A(\gamma)\succeq 0 } = \set{\gamma\in\R^m:\, \gamma_1\geq 1}.
\end{align*}
We compute: $\aff\dim(\set{b(\gamma):\, \gamma_1 = 1}) = k$.

By Lemma~\ref{lemma:sdp_in_terms_of_Gamma},
\begin{align*}
\Opt_\textup{SDP} &= \min_{x\in\R^N} \sup_{\gamma\in\Gamma} q(\gamma,x)\leq \sup_{\gamma\in\Gamma} q(\gamma,0)= -1.
\end{align*}
On the other hand, it is clear from \eqref{eq:polyhedral_lower_bound} that $\Opt = 0$.
\end{proof}

We next provide a construction that illustrates that Theorems~\ref{thm:conv_hull_main}~and~\ref{thm:conv_hull_symmetries} both break when Assumption~\ref{as:gamma_polyhedral} is dropped.
\begin{proposition}
\label{prop:Gamma_sharp_example} 
There exists a QCQP in $n = 2$ variables with $m=2$ constraints such that
\begin{itemize}
	\item Assumptions~\ref{as:gamma_definite}~and~\ref{as:cF_well_defined} are satisfied,
	\item the quadratic eigenvalue multiplicity of the QCQP is $k = 1$, and
	\item $k$ satisfies
	\begin{align*}
	k \geq \aff\dim(\set{b(\gamma):\,\gamma\in\mc F}) + 1
	\end{align*}
	for all semidefinite faces $\mc F$ of $\Gamma$, but
	\item $\Opt\neq \Opt_\textup{SDP}$ (and hence $\conv(\mc D) \neq \mc D_\textup{SDP}$).
\end{itemize}
\end{proposition}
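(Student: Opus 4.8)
The plan is to exhibit an explicit two–variable QCQP with two equality constraints whose dual set $\Gamma$ is a Euclidean disk---hence visibly non-polyhedral, with every proper face a single boundary point---while all the remaining hypotheses of Theorem~\ref{thm:conv_hull_symmetries} hold and yet $\Opt>\Opt_\textup{SDP}$. Concretely I would take $A_0=I_2$, $A_1=\diag(-1,1)$, $A_2=\left(\begin{smallmatrix}0&-1\\-1&0\end{smallmatrix}\right)$ and consider
\begin{align*}
\min_{x\in\R^2}\set{x_1^2+x_2^2 :\, \begin{array}{l} -x_1^2+x_2^2+4x_1-4=0\\ -2x_1x_2+4x_2=0\end{array}}.
\end{align*}
The two constraints factor as $-(x_1-2)^2+x_2^2=0$ and $-2x_2(x_1-2)=0$, so a one-line elimination shows that the feasible region is the single point $(2,0)$, giving $\Opt=4$.

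Next I would verify the structural requirements. Assumption~\ref{as:gamma_definite} is immediate since $A(0)=I_2\succ 0$ and $(2,0)$ is feasible. Computing
\begin{align*}
A(\gamma)=\begin{pmatrix} 1-\gamma_1 & -\gamma_2\\ -\gamma_2 & 1+\gamma_1\end{pmatrix},
\end{align*}
which has trace $2$ and determinant $1-\gamma_1^2-\gamma_2^2$, I get (both constraints being equalities, so $\Gamma$ carries no sign constraint) $\Gamma=\set{\gamma\in\R^2:\,\gamma_1^2+\gamma_2^2\le 1}$, the closed unit disk. Being bounded, $\Gamma$ satisfies Assumption~\ref{as:cF_well_defined} by Remark~\ref{rem:F(x)_well_defined}, but it is not polyhedral. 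Since $N=2$, the quadratic eigenvalue multiplicity divides $2$, and $k=2$ would force every $A_i$ to be a scalar multiple of $I_2$, which fails for $A_1$; hence $k=1$. Finally, the only definite face of $\Gamma$ is $\Gamma$ itself (it contains $0$), and all remaining faces are the extreme points $\set{\gamma}$ with $\norm{\gamma}=1$, each of which is semidefinite (there $A(\gamma)$ drops to rank one) and $0$-dimensional; thus $\aff\dim(\set{b(\gamma):\,\gamma\in\mc F})=0=k-1$ for every semidefinite face $\mc F$, so the hypothesis of Theorem~\ref{thm:conv_hull_symmetries} holds (and, via Lemma~\ref{lem:V_F_large}, so does that of Theorem~\ref{thm:conv_hull_main}).

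Then I would compute $\Opt_\textup{SDP}$ using Lemma~\ref{lemma:sdp_in_terms_of_Gamma}: $\Opt_\textup{SDP}=\min_x\sup_{\gamma\in\Gamma}q(\gamma,x)$, and since $\sup_{\gamma\in\Gamma}(\gamma_1q_1(x)+\gamma_2q_2(x))$ is the support function of the unit disk, this equals $\min_x\big(q_0(x)+\sqrt{q_1(x)^2+q_2(x)^2}\,\big)$. Writing $q_1(x)=-(x_1-2)^2+x_2^2$ and $q_2(x)=-2x_2(x_1-2)$, the key identity $q_1(x)^2+q_2(x)^2=\big((x_1-2)^2+x_2^2\big)^2$ turns the penalty into a simple radial term, so
\begin{align*}
\Opt_\textup{SDP}=\min_{x\in\R^2}\big(x_1^2+x_2^2+(x_1-2)^2+x_2^2\big)=2,
\end{align*}
attained at $(1,0)$. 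Since $\Opt_\textup{SDP}=2\ne 4=\Opt$, we also obtain $\conv(\mc D)\ne\mc D_\textup{SDP}$, which is what is claimed.

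The hard part is the construction itself, not the verification: one must keep $N$ and $m$ at their minimum of $2$, force $\Gamma$ to be non-polyhedral, and still obtain SDP looseness even though $A_0=I_2\succ 0$---if the feasible region contained the unconstrained minimizer of $q_0$ (here the origin), tightness would be automatic. The device I would use is to place the two constraint surfaces so that they meet only at $(2,0)$, away from the origin, and to choose $A_1,A_2$ so that $q_1^2+q_2^2$ collapses to the perfect square $\norm{x-(2,0)}^4$; the resulting penalty $\norm{x-(2,0)}^2$ lets the SDP interpolate between the origin and $(2,0)$ and thereby strictly beat $\Opt$.
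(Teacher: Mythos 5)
Your proposal is correct and follows essentially the same route as the paper: a two-variable QCQP with two quadratic equality constraints whose dual set $\Gamma$ is a disk, whose feasible set is a single point away from the unconstrained minimizer of $q_0$, and where the identity $q_1(x)^2+q_2(x)^2=\norm{x-p}^4$ collapses $\sup_{\gamma\in\Gamma}q(\gamma,x)$ (via Lemma~\ref{lemma:sdp_in_terms_of_Gamma} and the support function of the disk) into a squared-distance penalty, yielding $\Opt_\textup{SDP}<\Opt$. Indeed, your example is an affine translate of the paper's (which uses $q_0(x)=\norm{x-e_1}^2$ with constraints $x_1^2-x_2^2\pm 2x_1x_2=0$ vanishing only at the origin), the only cosmetic difference being that your constraints carry nonzero $b_i$ while the semidefinite faces remain singletons, so the affine-dimension condition is verified the same way.
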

\begin{proof}
Consider the following QCQP
\begin{align}
\label{eq:circle_gamma_example}
\min_{x\in\R^2}\set{\norm{x-e_1}^2 :\, \begin{array}
	{l}
	x_1^2 -x_2^2 + 2x_1x_2 = 0\\
	x_1^2 - x_2^2 - 2x_1x_2 = 0
\end{array}}.
\end{align}
Here
\begin{align*}
A_0 = \begin{pmatrix}
	1 & 0 \\ 0 & 1
\end{pmatrix}
,\qquad
A_1 = \begin{pmatrix}
	1 & 1 \\ 1 & -1
\end{pmatrix}
,\qquad
A_2 = \begin{pmatrix}
	1 & -1 \\ -1 & -1
\end{pmatrix}.
\end{align*}

Assumption~\ref{as:gamma_definite} holds since $A(0) = I\succ 0$ and $x=0$ is feasible in \eqref{eq:circle_gamma_example}.

We will describe $\Gamma$ explicitly.
For a $2\by2$ matrix $A(\gamma)$, we have that $A(\gamma)\succeq 0$ if and only if $\tr(A(\gamma))\geq 0$ and $\det(A(\gamma))\geq 0$. Note that $\tr(A(\gamma))=\tr(A_0)\geq 0$ for all $\gamma$, thus
\begin{align*}
\Gamma &= \set{\gamma\in\R^2:\, (1+\gamma_1+\gamma_2)(1-\gamma_1-\gamma_2) - (\gamma_1-\gamma_2)^2\geq 0}\\
&= \set{\gamma\in\R^2:\, 1 - 2\norm{\gamma}^2 \geq 0}\\
&= \mb B(0, 2^{-1/2}).
\end{align*}
Then Assumption~\ref{as:cF_well_defined} holds as $\Gamma$ is bounded.

It is clear that $k\geq 1$. To see that $k=1$, note that $A_1$ has eigenvalues $1$ and $-1$. Furthermore, as $b_1=b_2 = 0$, we have that $\aff\dim(\set{b(\gamma):\gamma\in\R^2}) + 1 = 1$. In particular, the same is true for any semidefinite face $\mc F$ of $\Gamma$.

Next we compute $\Opt_\textup{SDP}$. 
By our explicit description of $\Gamma$, for any fixed $\hat x$ we have
\begin{align*}
\sup_{\gamma\in\Gamma} q(\gamma,\hat x) &= q_0(x) + \max_{\gamma\in\mb B(0,1/\sqrt{2})} \ip{\gamma,\begin{pmatrix}
	q_1(x)\\
	q_2(x)
\end{pmatrix}}\\
&= q_0(x) + \sqrt{(q_1(x)^2 + q_2(x)^2)/2}\\
&= q_0(x) + \norm{x}^2 .
\end{align*}
Then, by Lemma~\ref{lemma:sdp_in_terms_of_Gamma}
\begin{align*}
\Opt_\textup{SDP} &= \min_x \sup_{\gamma\in\Gamma} q(\gamma, x)\\
&= \min_x\left(\norm{x-e_1}^2 + \norm{x}^2\right)\\
&= 1/2.
\end{align*}
On the other hand, it is clear from \eqref{eq:circle_gamma_example} that $\Opt = 1$.
\end{proof}

\section{Exactness of the SDP relaxation}
\label{sec:tightness}

In this section, we use our framework to give new conditions under which $\Opt_\textup{SDP} = \Opt$.

\begin{theorem}
\label{thm:sdp_tightness_main}
Suppose Assumptions~\ref{as:gamma_definite}~and~\ref{as:gamma_polyhedral} hold. If for every semidefinite face $\mc F$ of $\Gamma$ we have 
\begin{align*}
0\notin \Proj_{\mc V(\mc F)}\set{b(\gamma):\, \gamma\in\mc F}, 
\end{align*}
then any optimizer $(x^*,t^*)$ in $\argmin_{(x,t)\in\mc D_{\textup{SDP}}} 2t$ satisfies $(x^*,t^*)\in\mc D$. In particular, $\Opt=\Opt_\textup{SDP}$.
\end{theorem}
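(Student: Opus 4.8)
The plan is to argue by contradiction, adapting the pivoting argument of Lemma~\ref{lem:pivoting_F} so that the pivot moves \emph{strictly downhill} in the $t$-coordinate; this actually shows that no optimizer has a semidefinite face, which is stronger than what Lemma~\ref{lem:applied_easy_part_sdp_tightness} would give. Let $(x^*,t^*)$ be optimal for $\min\set{2t:(x,t)\in\mc D_\textup{SDP}}$ and suppose for contradiction that $(x^*,t^*)\notin\mc D$. Since $(x^*,t^*)\in\mc D_\textup{SDP}$ and $2t^*=\Opt_\textup{SDP}$, optimality forces $\sup_{\gamma\in\Gamma}q(\gamma,x^*)=2t^*$ (otherwise $t^*$ could be lowered using Lemma~\ref{lemma:sdp_in_terms_of_Gamma}); Assumption~\ref{as:gamma_polyhedral} implies Assumption~\ref{as:cF_well_defined}, so this supremum is attained and $\mc F\coloneqq\mc F(x^*)$ is a well-defined face. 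By the contrapositive of Lemma~\ref{lem:F_contains_definite}, $\mc F$ must be a semidefinite face.

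To build the pivot direction, note that $\set{b(\gamma):\gamma\in\mc F}$ is the image of the face $\mc F$ (a polyhedron) under the affine map $\gamma\mapsto b(\gamma)$, hence a polyhedron, so its orthogonal projection onto $\mc V(\mc F)$ is again a polyhedron --- in particular closed and convex --- and by hypothesis it omits the origin. Strict separation of $0$ from this closed convex set produces $v\in\mc V(\mc F)$ and $\delta>0$ with $b(\gamma)^\top v\ge\delta$ for all $\gamma\in\mc F$ (here one uses that the separating direction may be taken inside $\mc V(\mc F)$, so $b(\gamma)^\top v$ equals $v$ against the projection of $b(\gamma)$); after rescaling, take $v\in\mc V(\mc F)\cap\mb S^{N-1}$. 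Now pivot along $(-v,-\delta)$: set $(x_\alpha,t_\alpha)\coloneqq(x^*-\alpha v,\ t^*-\alpha\delta)$ and show $(x_\alpha,t_\alpha)\in\mc D_\textup{SDP}$ for all small $\alpha>0$. Using Lemma~\ref{lem:decompose_gamma} to write $\Gamma=\Gamma_e+\cone(\Gamma_r)$ and $\mc F=\mc F_e+\cone(\mc F_r)$, and the fact that $x^*\in\dom F$ gives $\max_{\gamma_e\in\Gamma_e}q(\gamma_e,x^*)=2t^*$ with $\breve q(\gamma_r,x^*)\le0$ for all $\gamma_r\in\Gamma_r$: for $\gamma_e\in\mc F_e$ the cross and quadratic terms in $\alpha$ vanish since $A(\gamma_e)v=0$, so $q(\gamma_e,x_\alpha)=2t^*-2\alpha\,b(\gamma_e)^\top v\le 2t^*-2\alpha\delta=2t_\alpha$; for the finitely many $\gamma_e\in\extr(\Gamma_e)\setminus\mc F_e$ we have $q(\gamma_e,x^*)<2t^*$ strictly, so $q(\gamma_e,x_\alpha)\le 2t_\alpha$ for small $\alpha$ by continuity; for $\gamma_r\in\mc F_r$ one checks $\breve A(\gamma_r)v=0$ and $\breve b(\gamma_r)^\top v\ge0$ (both from $v\in\mc V(\mc F)$, $\gamma_e+\lambda\gamma_r\in\mc F$ for $\lambda\ge0$, and the separation inequality along these rays), whence $\breve q(\gamma_r,x_\alpha)=-2\alpha\,\breve b(\gamma_r)^\top v\le0$; and $\gamma_r\in\extr(\Gamma_r)\setminus\mc F_r$ has $\breve q(\gamma_r,x^*)<0$ and is again handled by continuity. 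Since $q(\cdot,x)$ and $\breve q(\cdot,x)$ are affine in $\gamma$, Lemma~\ref{lemma:sdp_in_terms_of_Gamma} (with $\Gamma_e=\conv(\extr\Gamma_e)$, $\Gamma_r=\conv(\extr\Gamma_r)$) yields $\sup_{\gamma\in\Gamma}q(\gamma,x_\alpha)\le 2t_\alpha$, so $(x_\alpha,t_\alpha)\in\mc D_\textup{SDP}$ with $2t_\alpha<2t^*=\Opt_\textup{SDP}$, a contradiction. Hence every optimizer has $\mc F(x^*)$ definite, so $(x^*,t^*)\in\mc D$ by Lemma~\ref{lem:F_contains_definite}, and since $\mc D\subseteq\mc D_\textup{SDP}$ this also gives $\Opt=\Opt_\textup{SDP}$.

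The genuinely new ingredient compared with Lemma~\ref{lem:pivoting_F} is the separation step, which is exactly where the hypothesis $0\notin\Proj_{\mc V(\mc F)}\set{b(\gamma):\gamma\in\mc F}$ enters; I expect the main obstacle to be the bookkeeping in the pivot --- in particular verifying $\breve A(\gamma_r)v=0$ and $\breve b(\gamma_r)^\top v\ge 0$ on the recession face $\mc F_r$, which is what ensures the equality constraints and the recession directions of $\Gamma$ remain respected after moving to $x_\alpha$.
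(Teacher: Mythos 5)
Your proposal is correct and follows essentially the same route as the paper's proof: strictly separate $0$ from the polyhedral set $\Proj_{\mc V(\mc F)}\set{b(\gamma):\gamma\in\mc F}$, pivot $(x^*,t^*)$ along the separating direction with $t$ decreasing linearly, and use polyhedrality (finitely many extreme points and rays of $\Gamma_e$, $\Gamma_r$) to find a small $\alpha>0$ keeping the point in $\mc D_\textup{SDP}$, contradicting optimality. The only cosmetic difference is that you verify $\breve A(\gamma_r)v=0$ and $\breve b(\gamma_r)^\top v\ge 0$ on $\mc F_r$ separately, whereas the paper treats all $\gamma_f\in\mc F=\mc F_e+\cone(\mc F_r)$ in one uniform computation; both are valid.
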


In other words, under the assumptions of Theorem~\ref{thm:sdp_tightness_main}, given any optimizer
\begin{align*}
\begin{pmatrix}
	1 & x^\top\\ x & X
\end{pmatrix}
\end{align*}
of \eqref{eq:shor_sdp}, we can simply return $x$ as an optimizer for \eqref{eq:qcqp}.

\begin{proof}
Let
\begin{align*}
(x^*,t^*)\in\argmin_{(x,t)\in\mc D_\textup{SDP}} 2t.
\end{align*}

Let $\mc F = \mc F(x^*)$. We claim that $\mc F$ will always be definite under the assumptions of this theorem. In particular, we will be able to apply Lemma~\ref{lem:F_contains_definite} to conclude that $(x^*,t^*)\in\mc D$. To this end, we will show that $\mc F$ is definite by first assuming that $\mc F$ is semidefinite and then deriving a contradiction to the assumption that $(x^*,t^*)\in\argmin_{(x,t)\in\mc D_\textup{SDP}} 2t$.

Assume for contradiction that $\mc F$ is a semidefinite face of $\Gamma$. By Lemma~\ref{lem:semidefinite_shares_zero}, $\mc V(\mc F)$ has a nonzero element.  For the sake of convenience, let $\mc P\coloneqq \Proj_{\mc V(\mc F)}\set{b(\gamma):\, \gamma\in\mc F}$. 
Assumption~\ref{as:gamma_polyhedral} implies that $\mc P$ is a nonempty closed convex set. Indeed, $\mc P$ is an affine transformation of $\mc F$, which is a face of the polyhedral set $\Gamma$, and is thus itself polyhedral.

Under our assumption, the compact set $\set{0}$ and the nonempty closed convex set $\mc P$ are disjoint. Thus, by the hyperplane separation theorem, there exists a nonzero vector $v\in\mc V(\mc F)$ and $\epsilon>0$ such that $v^\top b(\gamma)\leq -\epsilon$ for all $\gamma\in\mc F$.

Apply Lemma~\ref{lem:decompose_gamma} to decompose $\Gamma = \Gamma_e + \cone(\Gamma_r)$ and $\mc F = \mc F_e + \mc F_r$.

We will modify $(x^*,t^*)$ in the $(v,-\epsilon)$ direction. Define
\begin{align*}
(x_\alpha,t_\alpha) \coloneqq (x^* + \alpha v,t^* - \alpha \epsilon),
\end{align*}
where $\alpha>0$ will be chosen later.

First, consider how $q(\gamma, x_\alpha)-2t_\alpha$ changes with $\alpha$ for fixed $\gamma_f\in\mc F$. We can expand
\begin{align*}
q(\gamma_f, x_\alpha) -2t_\alpha &= \left(q(\gamma_f, x^*) - 2t^*\right) + 2\alpha\left(x^{*\top}A(\gamma_f)v + b(\gamma_f)^\top v + \epsilon \right) + \alpha^2 v^\top A(\gamma_f)v\\
&\leq \left(q(\gamma_f, x^*)- 2t^*\right)\\
&= 0.
\end{align*}
The second line follows as $A(\gamma_f)v = 0$ and $b(\gamma_f)^\top v \leq -\epsilon$ for all $\gamma_f\in\mc F$.
The third line follows as $q(\gamma_f,x^*) = 2t^*$ for all $\gamma_f\in\mc F$.

On the other hand, for $\gamma_e\in\Gamma_e\setminus \mc F_e$, the function $\alpha\mapsto q(\gamma_e, x_\alpha)-2t_\alpha$ is a continuous function taking the value $q(\gamma_e, x^*)-2t^*<0$ at $\alpha = 0$ (the strict inequality follows from the fact that $\gamma_e\in\Gamma_e\setminus \mc F_e$).

Similarly, for $\gamma_r\in\Gamma_r\setminus \mc F_r$, the function $\alpha\mapsto \breve q(\gamma_r, x_\alpha)$ is a continuous function taking the value $\breve q(\gamma_r, x^*)<0$ at $\alpha =0$ (the strict inequality follows from the fact that $\gamma_r\in\Gamma_r\setminus \mc F_r$).

We have shown that the following finite set of continuous functions in $\alpha$,
\begin{align*}
\mc Q \coloneqq \set{q(\gamma_e,x_\alpha)-2t_\alpha :\, \gamma_e\in\extr(\Gamma_e)\setminus \mc F_e} \cup \set{\breve q(\gamma_r, x_\alpha) :\, \gamma_r \in\extr(\Gamma_r)\setminus \mc F_r},
\end{align*}
consists of continuous functions which are negative at $\alpha = 0$. The finiteness of this set follows from the assumption that $\Gamma$ is polyhedral.

Fix an $\alpha>0$ such that $q(\alpha)\leq 0$ for every $q\in\mc Q$ --- this is possible by the finiteness of $\mc Q$ and the continuity of each $q\in\mc Q$.
Then because $\Gamma_e = \conv(\extr(\Gamma_e))$ and $\Gamma_r = \conv(\extr(\Gamma_r))$, we have
$q(\gamma_e,x_\alpha)\leq 2t_\alpha$ for all $\gamma_e\in\Gamma_e$ and $\breve q(\gamma_r,x_\alpha)\leq 0$ for all $\gamma_r\in\Gamma_r$. Thus, $(x_\alpha, t_\alpha)\in\mc D_\textup{SDP}$.
In particular, $\min_{(x,t)\in\mc D_\textup{SDP}}2t \leq2 t_\alpha <2t^*$, a contradiction.
\end{proof}

The following theorem will follow from Theorem~\ref{thm:sdp_tightness_main} by a perturbation argument.
\begin{theorem}
\label{thm:sdp_tightness_perturbed}
Suppose Assumptions~\ref{as:gamma_definite}~and~\ref{as:gamma_polyhedral} hold. If there exists a sequence $(h_j)_{j\in\N}$ in $\R^N$ such that $\lim_{j\to\infty}h_j=0$ and for every semidefinite face $\mc F$ of $\Gamma$ and $j\in\N$ we have
\begin{align*}
0 \notin \Proj_{\mc V(\mc F)}\set{b(\gamma) + h_j:\, \gamma\in\mc F}, 
\end{align*}
then $\Opt = \Opt_\textup{SDP}$.
\end{theorem}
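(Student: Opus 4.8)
The plan is to deduce Theorem~\ref{thm:sdp_tightness_perturbed} from Theorem~\ref{thm:sdp_tightness_main} by applying the latter to a sequence of perturbed QCQPs and passing to the limit. For each $j\in\N$, consider the QCQP obtained from \eqref{eq:qcqp} by replacing the objective $q_0(x)$ with $q_0^{(j)}(x)\coloneqq q_0(x) + 2\ip{h_j,x}$; equivalently, we replace $b_0$ by $b_0 + h_j$ while keeping all $A_i$, $c_i$, and all constraint data fixed. Since the $A_i$ are unchanged, the dual object $\Gamma$, the faces $\mc F$, the subspaces $\mc V(\mc F)$, and the quantities $\breve A(\gamma),\breve b(\gamma)$ are all identical for the perturbed problem; only $b(\gamma)$ changes, to $b(\gamma) + h_j$. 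First I would check that Assumptions~\ref{as:gamma_definite} and \ref{as:gamma_polyhedral} continue to hold for the perturbed problem: Assumption~\ref{as:gamma_polyhedral} is about $\Gamma$, which is unchanged; Assumption~\ref{as:gamma_definite} requires primal feasibility (unchanged, since constraints are untouched) and the existence of $\gamma^*$ with $A(\gamma^*)\succ 0$ (unchanged, since the $A_i$ are the same). By hypothesis, $0\notin\Proj_{\mc V(\mc F)}\{b(\gamma)+h_j:\gamma\in\mc F\}$ for every semidefinite face $\mc F$, which is precisely the hypothesis of Theorem~\ref{thm:sdp_tightness_main} for the $j$th perturbed problem. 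Hence $\Opt^{(j)} = \Opt_\textup{SDP}^{(j)}$ for every $j$, where the superscript $(j)$ denotes quantities associated to the $j$th perturbed QCQP.

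Next I would relate $\Opt^{(j)}$ and $\Opt_\textup{SDP}^{(j)}$ to their unperturbed counterparts as $j\to\infty$. Using Lemma~\ref{lemma:sdp_in_terms_of_Gamma}, $\Opt_\textup{SDP}^{(j)} = \min_{x\in\R^N}\sup_{\gamma\in\Gamma}\big(q(\gamma,x) + 2\ip{h_j,x}\big)$, and similarly $\Opt^{(j)}$ is the infimum of $q_0(x)+2\ip{h_j,x}$ over the (fixed) feasible region. The perturbation $2\ip{h_j,x}$ is linear and vanishes uniformly on bounded sets as $j\to\infty$; the main quantitative point is to control these values as $h_j\to 0$. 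For $\Opt^{(j)}$: fix any feasible $\bar x$ and a minimizing or near-minimizing sequence for the original problem to get $\limsup_j \Opt^{(j)}\le \Opt$, and conversely, since $|2\ip{h_j,x}|\le 2\norm{h_j}\norm{x}$, a standard coercivity argument (using $\gamma^*$ from Assumption~\ref{as:gamma_definite}, which forces the SDP objective, and hence via $\mc D\subseteq\mc D_\textup{SDP}$ also the QCQP, to be bounded below and to have a bounded set of near-optimal solutions once $\norm{h_j}$ is small) gives $\liminf_j\Opt^{(j)}\ge\Opt$. The same argument applied to $\sup_{\gamma\in\Gamma}q(\gamma,x)$ in place of $q_0(x)$ yields $\Opt_\textup{SDP}^{(j)}\to\Opt_\textup{SDP}$. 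Combining with $\Opt^{(j)}=\Opt_\textup{SDP}^{(j)}$ gives $\Opt=\Opt_\textup{SDP}$.

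The main obstacle is making the limiting argument rigorous, specifically establishing that the near-optimal solution sets of the perturbed problems remain uniformly bounded (so that the linear perturbation is uniformly small there) and that $\Opt_\textup{SDP}$ is attained or at least approached on such a bounded set. I would handle this by invoking Assumption~\ref{as:gamma_definite}: there is $\gamma^*$ with $A(\gamma^*)\succ 0$, so $x\mapsto q(\gamma^*,x)$ is strongly convex and coercive, hence $\sup_{\gamma\in\Gamma}q(\gamma,x)\ge q(\gamma^*,x)\to\infty$ as $\norm{x}\to\infty$ at a rate that dominates the linear term $2\ip{h_j,x}$ once $j$ is large; this confines the (near-)minimizers of $\sup_{\gamma\in\Gamma}(q(\gamma,x)+2\ip{h_j,x})$ to a fixed ball $\mb B(0,R)$ independent of $j$ (for $j$ large), on which $|2\ip{h_j,x}|\le 2R\norm{h_j}\to 0$. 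A mild subtlety is that the feasible region of the QCQP may be unbounded even though its intersection with the relevant sublevel sets is bounded; the same coercivity estimate, applied now on the feasible region, resolves this. One should also note the trivial degenerate case: if $\Gamma$ has no semidefinite face, the hypothesis is vacuous and $\Opt=\Opt_\textup{SDP}$ follows directly from Lemma~\ref{lem:F_contains_definite} without any perturbation, so we may assume semidefinite faces exist. With these estimates in place, the squeeze $\Opt\ge\Opt_\textup{SDP}=\lim_j\Opt_\textup{SDP}^{(j)}=\lim_j\Opt^{(j)}=\Opt$ completes the proof.
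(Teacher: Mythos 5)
Your proposal is correct and takes essentially the same approach as the paper: perturb the linear term of the objective by $h_j$, observe that $\Gamma$, its faces, and Assumptions~\ref{as:gamma_definite}~and~\ref{as:gamma_polyhedral} are unchanged so Theorem~\ref{thm:sdp_tightness_main} applies to each perturbed problem, and then pass to the limit $j\to\infty$. The only (minor) difference is in the limiting step: the paper extracts a subsequential limit of optimizers of the perturbed QCQPs (bounded via Assumption~\ref{as:gamma_definite}, feasible by closedness, then uses continuity of $q_0$), whereas you argue convergence of the optimal values $\Opt^{(j)}\to\Opt$ and $\Opt^{(j)}_{\textup{SDP}}\to\Opt_{\textup{SDP}}$ directly through the same coercivity estimate, which is an equally valid way to finish.
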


\begin{proof}
Consider the following sequence of QCQPs indexed by $j\in\N$:
\begin{align*}
\Opt_j \coloneqq \min_{x\in\R^N} \set{q_0(x) + 2h_j^\top x :\, \begin{array}
	{l}
	q_i(x) \leq 0 ,\,\forall i\in\intset{m_I}\\
	q_i(x) = 0 ,\,\forall i\in\intset{m_I+1,m}
\end{array}}.
\end{align*}
We will use the subscript $j$ to denote all quantities corresponding to the perturbed QCQP. 
By construction, each of the QCQPs in this sequence satisfies the assumptions of Theorem~\ref{thm:sdp_tightness_main} and thus $\Opt_{\textup{SDP},j} = \Opt_j$. 
For $j\in\N$, let
\begin{align*}
(x_j, t_j)\in\argmin_{(x,t)\in\mc D_j} 2t.
\end{align*}
Let $x^*$ be a subsequential limit of $\set{x_j}_{j\in\N}$ (this exists as we can bound the sequence $\set{x_j}_{j\in\N}$ using Assumption~\ref{as:gamma_definite}). Noting that the feasible domain of the original QCQP is closed, we have that $x^*$, a subsequential limit of feasible points, is also feasible. Finally, by continuity of $q_0$ and the optimality of $(x_j,t_j)\in\mc D_j$, we have that
\begin{align*}
q_0(x^*) = \lim_{j\to\infty} q_0(x_j) = \lim_{j\to\infty} \Opt_j = \lim_{j\to\infty} \Opt_{\textup{SDP},j} = \Opt_\textup{SDP}.
\end{align*}
Here, the final equality holds by a simple boundedness argument and Assumption~\ref{as:gamma_definite}.
\end{proof}

The following example shows that SDP tightness (for example via Theorem~\ref{thm:sdp_tightness_perturbed}) may hold even when the convex hull result does not.
\begin{example}
\label{ex:optimality}
Consider the following QCQP
\begin{align*}
\inf_{x\in\R^2} \set{x_1^2 + x_2^2 :\, \begin{array}
	{l}
	x_1^2  - x_2^2 \leq 0 \\
	2x_2 \leq 0
\end{array}}.
\end{align*}
We verify that the conditions of Theorem~\ref{thm:sdp_tightness_perturbed} hold. It is clear that Assumption~\ref{as:gamma_definite} holds: $A(0) = I \succ 0$ and $x=0$ is feasible. It is easy to verify that $\Gamma = [0,1] \times\R_+$, thus Assumption~\ref{as:gamma_polyhedral} also holds.
Finally, pick $h_j = e_2/j$ for $j\in\N$. Note that the only semidefinite face of $\Gamma$ is $\mc F = \set{1} \times \R_+$ and that $\mc V(\mc F) = \spann\set{e_2}$. In particular,
\begin{align*}
\Proj_{\mc V(\mc F)}\set{b(\gamma) + h_j:\, \gamma\in\mc F} = \set{0}\times [1/j,\infty),
\end{align*}
which does not contain $0$. We deduce that $\Opt = \Opt_\textup{SDP}$.

Next, we claim that $\conv(\mc D) \neq \mc D_\textup{SDP}$. First note that $\mc D$ is actually convex in this example.
\begin{align*}
\mc D &= \set{(x,t):\, \begin{array}
	{l}
	x_1^2 + x_2^2 \leq 2t\\
	x_1^2 - x_2^2 \leq 0\\
	2x_2 \leq 0
\end{array}}
=\set{(x,t):\, \begin{array}
	{l}
	x_1^2 + x_2^2 \leq 2t\\
	\abs{x_1}  \leq  -x_2\\
	2x_2 \leq 0
\end{array}}
\end{align*}
Next by Lemma~\ref{lemma:sdp_in_terms_of_Gamma} and the description of $\Gamma$ above, we have that
\begin{align*}
\mc D_\textup{SDP} &= \set{(x,t):\, \begin{array}
	{l}
	x_1^2+x_2^2 \leq 2t\\
	2x_1^2 \leq 2t\\
	2x_2\leq 0
\end{array}}.
\end{align*}
Then we may check, for example, that
\begin{align*}
((1,0),1)\in\mc D_\textup{SDP} \qquad\text{but}\qquad ((1,0),1)\notin\mc D = \conv(\mc D).
\end{align*}
We conclude that $\Opt=\Opt_\textup{SDP}$ but $\conv(\mc D) \neq \mc D_\textup{SDP}$. We plot $\mc D$ and $\mc D_\textup{SDP}$ in Figure~\ref{fig:example_optimality}.\mathprog{\qed}
\end{example}
\begin{figure}
  \centering
    \includegraphics[width=0.4\textwidth]{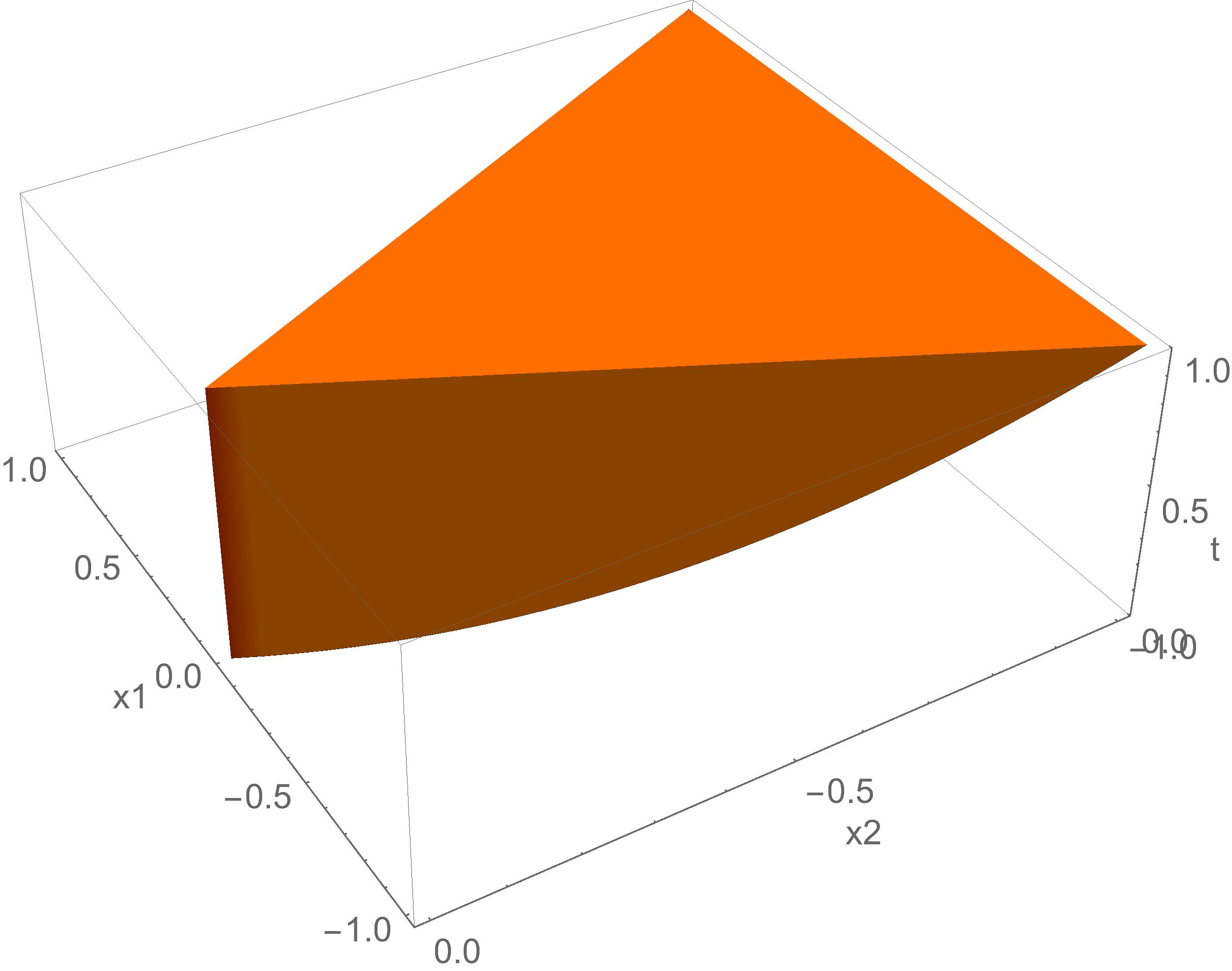}\qquad
    \includegraphics[width=0.4\textwidth]{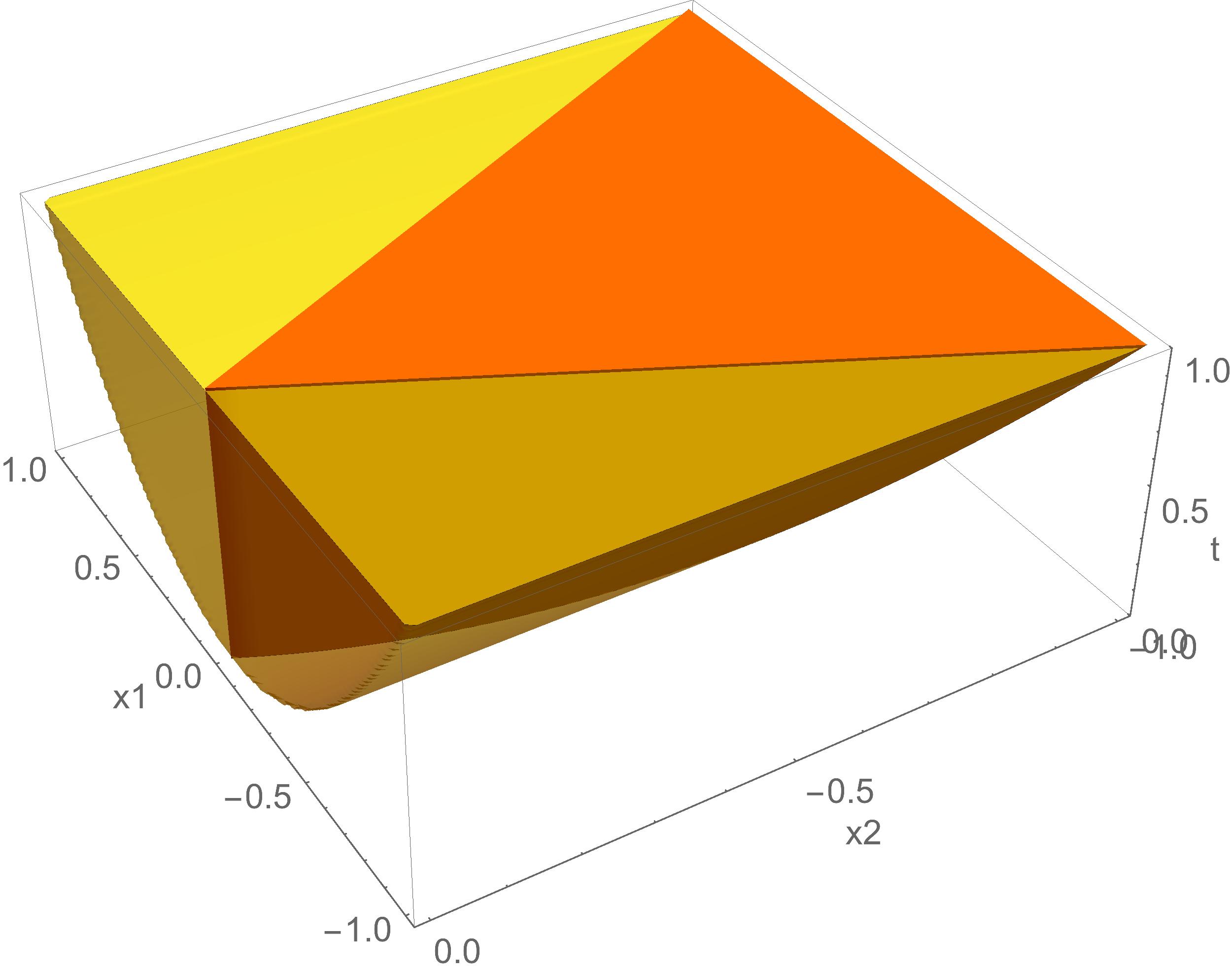}
	\caption{The sets $\conv(\mc D)$ (in orange) and $\mc D_\textup{SDP}$ (in yellow) from Example~\ref{ex:optimality}}
	\label{fig:example_optimality}
\end{figure}

\subsection{Comparison with related conditions in the literature}
\label{subsec:literature_comparison}

Several sufficient conditions for SDP tightness results have been examined in the literature. In this section, we compare these conditions with our Theorems~\ref{thm:sdp_tightness_main}~and~\ref{thm:sdp_tightness_perturbed}.

\citet{locatelli2016exactness} considers the SDP relaxation of a variant of the TRS,
\begin{align}
\label{eq:locatelli}
\inf_{x\in\R^N} \set{q_0(x) :\, \begin{array}
	{l}
	b_i^\top x + c_i \leq 0 ,\,\forall i\in\intset{m - 1}\\
	x^\top x -1 \leq 0
\end{array}}.
\end{align}
We assume that $A_0 = \Diag(a_0)$ without loss of generality. Indeed, if $A_0$ is not diagonal, we can reformulate the problem in the eigenbasis of $A_0$.
Furthermore, we will assume that $A_0$ has at least one negative eigenvalue as otherwise \eqref{eq:locatelli} is already convex.

Let $J\subseteq \intset{N}$ be the set of coordinates corresponding to $\lambda_{\min}(A_0)$, i.e., define
\begin{align*}
J \coloneqq \set{j\in\intset{N}: (a_0)_j = \min_{i\in\intset{N}} (a_0)_i},
\end{align*}
and let $\mc V_J \coloneqq \spann(\set{e_j: j\in J})$.

\citet{locatelli2016exactness} derives a sufficient condition for SDP tightness by reasoning about the nonexistence of certain KKT multipliers in the SOCP relaxation of \eqref{eq:locatelli}. For the sake of completeness, we restate this result in our language.

\begin{theorem}
[{\cite[Theorem 3.1]{locatelli2016exactness}}]
\label{thm:locatelli}
Consider the problem \eqref{eq:locatelli} and assume that $A_0$ has at least one negative eigenvalue. Suppose the feasible region of \eqref{eq:locatelli} is strictly feasible. If there exists a sequence $(h_j)_{j\in\N}$ in $\R^N$ such that $\lim_{j\to\infty} h_j = 0$ and for every $j\in\N$ we have
\begin{align*}
0\notin \Proj_{\mc V_J}\set{b(\gamma)+ h_j:\, 
\gamma\in\R_+^m},
\end{align*}
then $\Opt = \Opt_\textup{SDP}$.
\end{theorem}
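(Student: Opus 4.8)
The plan is to obtain Theorem~\ref{thm:locatelli} as a specialization of Theorem~\ref{thm:sdp_tightness_perturbed}. First I would record the structure of \eqref{eq:locatelli} in the notation of this paper: the linear constraints give $A_i=0$ (with $b_i$ the given vectors) for $i\in\intset{m-1}$, while the ball constraint gives $A_m=I_N$, $b_m=0$, $c_m=-1$; also $A_0=\Diag(a_0)$ with $\lambda_{\min}(A_0)=\min_j(a_0)_j<0$. Hence $A(\gamma)=\Diag(a_0+\gamma_m\mathbf{1})$, which is positive semidefinite iff $\gamma_m\geq-\lambda_{\min}(A_0)=:c>0$ and positive definite iff $\gamma_m>c$. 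Since every constraint of \eqref{eq:locatelli} is an inequality, $\Gamma=\R_+^{m-1}\times[c,\infty)$, which is polyhedral, so Assumption~\ref{as:gamma_polyhedral} holds; and $\gamma^\ast:=(c+1)e_m$ satisfies $A(\gamma^\ast)\succ 0$ with $\gamma^\ast_i\geq 0$, so together with the assumed strict feasibility of \eqref{eq:locatelli} this gives Assumption~\ref{as:gamma_definite}.

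Next I would identify the semidefinite faces of $\Gamma$. Any face of $\Gamma$ containing a point with $\gamma_m>c$ is definite, so the semidefinite faces are exactly the faces of the facet $F^\ast:=\R_+^{m-1}\times\{c\}$. For every such face $\mc F$, the matrix $A(\gamma)=\Diag(a_0+c\mathbf{1})$ is constant along $\mc F$ and singular, with kernel $\spann\set{e_j:(a_0)_j=\min_i(a_0)_i}=\mc V_J$; hence $\mc V(\mc F)=\mc V_J$ for \emph{every} semidefinite face $\mc F$. Moreover, since $b_m=0$, the map $\gamma\mapsto b(\gamma)=b_0+\sum_{i=1}^{m-1}\gamma_i b_i$ is independent of $\gamma_m$, so $\set{b(\gamma):\gamma\in\mc F}\subseteq\set{b(\gamma):\gamma\in F^\ast}=\set{b(\gamma):\gamma\in\R_+^m}$.

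To conclude, take the sequence $(h_j)_{j\in\N}$ from the hypothesis of Theorem~\ref{thm:locatelli}, so that $0\notin\Proj_{\mc V_J}\set{b(\gamma)+h_j:\gamma\in\R_+^m}$ for all $j$. By the previous paragraph, for every semidefinite face $\mc F$ of $\Gamma$ and every $j\in\N$,
\[
\Proj_{\mc V(\mc F)}\set{b(\gamma)+h_j:\gamma\in\mc F}
=\Proj_{\mc V_J}\set{b(\gamma)+h_j:\gamma\in\mc F}
\subseteq\Proj_{\mc V_J}\set{b(\gamma)+h_j:\gamma\in\R_+^m},
\]
which therefore does not contain $0$. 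This is precisely the hypothesis of Theorem~\ref{thm:sdp_tightness_perturbed} (recall that Assumption~\ref{as:gamma_polyhedral} already implies Assumption~\ref{as:cF_well_defined}), so that theorem gives $\Opt=\Opt_\textup{SDP}$.

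There is no genuine obstacle here: the argument is essentially a bookkeeping reduction. The one point that needs care is the correct enumeration of the semidefinite faces of $\Gamma$ together with the verification that all of them share the single zero eigenspace $\mc V_J$ and have $b(\gamma)$-values lying inside $\set{b(\gamma):\gamma\in\R_+^m}$; this is exactly what makes the one condition in Theorem~\ref{thm:locatelli} suffice for all faces at once. A minor nuisance is reconciling the convention $b_i^\top x+c_i$ used in \eqref{eq:locatelli} with the paper's $x^\top A_i x+2b_i^\top x+c_i$, which only rescales the linear coefficients and does not affect the argument.
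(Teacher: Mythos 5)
Your proof is correct and takes essentially the same route as the paper: the paper states this result as a citation of Locatelli and establishes it internally via Proposition~\ref{prop:locatelli_connection}, which verifies Assumptions~\ref{as:gamma_definite} and~\ref{as:gamma_polyhedral}, shows $\mc V(\mc F)=\mc V_J$ for the semidefinite faces and uses $\mc F\subseteq\R_+^m$ to transfer the projection condition, and then invokes Theorem~\ref{thm:sdp_tightness_perturbed}, exactly as you do. Your slightly more careful enumeration of all subfaces of the facet $\R_+^{m-1}\times\set{-\lambda_{\min}(A_0)}$ (rather than treating only the facet itself as the semidefinite face) is a harmless refinement of the paper's argument.
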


\begin{proposition}\label{prop:locatelli_connection}
Suppose the assumptions of Theorem~\ref{thm:locatelli} hold, then the assumptions of Theorem~\ref{thm:sdp_tightness_perturbed} also hold.
\end{proposition}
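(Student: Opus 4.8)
The plan is to check, in turn, the three hypotheses of Theorem~\ref{thm:sdp_tightness_perturbed}: Assumption~\ref{as:gamma_definite}, Assumption~\ref{as:gamma_polyhedral}, and the perturbation condition on semidefinite faces — and for the last one I will simply reuse, verbatim, the sequence $(h_j)_{j\in\N}$ furnished by Theorem~\ref{thm:locatelli}.

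First I would write $\Gamma$ out explicitly. In \eqref{eq:locatelli} the linear constraints contribute zero matrices, $A_i = 0$ for $i\in\intset{m-1}$, the ball constraint gives $A_m = I_N$ with $b_m = 0$ and $c_m = -1$, and all $m$ constraints are inequalities; with $A_0 = \Diag(a_0)$ as in the theorem we get $A(\gamma) = \Diag(a_0) + \gamma_m I_N$, which is positive semidefinite exactly when $\gamma_m \geq \lambda \coloneqq -\lambda_{\min}(A_0) = -\min_{i\in\intset{N}}(a_0)_i$, and $\lambda > 0$ since $A_0$ has a negative eigenvalue. Hence
\[
\Gamma = \set{\gamma\in\R^m :\, \gamma_i\geq 0\ \forall i\in\intset{m-1},\ \gamma_m\geq\lambda}
\]
is polyhedral, so Assumption~\ref{as:gamma_polyhedral} holds; and Assumption~\ref{as:gamma_definite} holds because \eqref{eq:locatelli} is strictly feasible by hypothesis and $\gamma^* \coloneqq (\lambda+1)e_m$ has nonnegative coordinates with $A(\gamma^*) = \Diag(a_0)+(\lambda+1)I_N \succ 0$.

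Next I would identify the semidefinite faces of $\Gamma$. A face is definite iff it contains a point with $\gamma_m > \lambda$ (this is precisely when $A(\gamma)\succ 0$), so the semidefinite faces are exactly the faces of $\Gamma$ lying in the hyperplane $\set{\gamma_m=\lambda}$. On any such face $\mc F$, $A(\gamma) = \Diag(a_0)+\lambda I_N$ is one fixed diagonal positive semidefinite matrix whose $j$-th diagonal entry $(a_0)_j - \min_i(a_0)_i$ vanishes exactly for $j\in J$; therefore $\mc V(\mc F) = \mc V_J$ for \emph{every} semidefinite face $\mc F$. Finally, since $b_m = 0$ and $A_i = 0$ for $i\in\intset{m-1}$, we have $b(\gamma) = b_0 + \sum_{i=1}^{m-1}\gamma_i b_i$, so $\set{b(\gamma):\,\gamma\in\R_+^m} = b_0 + \cone(\set{b_i : i\in\intset{m-1}})$; and any semidefinite face $\mc F$ is cut out of $\Gamma$ by additionally fixing $\gamma_m = \lambda$ and setting some of the $\gamma_i$ ($i\in\intset{m-1}$) to $0$, so $\set{b(\gamma):\,\gamma\in\mc F} \subseteq \set{b(\gamma):\,\gamma\in\R_+^m}$. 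Applying the linear map $\Proj_{\mc V(\mc F)} = \Proj_{\mc V_J}$ and using monotonicity of images under inclusion,
\[
\Proj_{\mc V(\mc F)}\set{b(\gamma)+h_j:\,\gamma\in\mc F} \subseteq \Proj_{\mc V_J}\set{b(\gamma)+h_j:\,\gamma\in\R_+^m},
\]
whose right-hand side avoids $0$ for every $j$ by the hypothesis of Theorem~\ref{thm:locatelli}; hence so does the left-hand side, for every semidefinite face $\mc F$ and every $j\in\N$. This is exactly the hypothesis of Theorem~\ref{thm:sdp_tightness_perturbed}.

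I do not expect a real obstacle here — the argument is essentially bookkeeping. The two places that need a little care are (i) reading off $\Gamma$ correctly, in particular noticing that the linear constraints add nothing to $A(\gamma)$ so that the positive semidefiniteness requirement collapses to a single lower bound on $\gamma_m$, and (ii) observing that $\mc V(\mc F)$ is the \emph{same} subspace $\mc V_J$ for all semidefinite faces, which is precisely what lets our face-by-face condition match Locatelli's single condition over $\R_+^m$.
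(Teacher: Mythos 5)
Your proposal is correct and follows essentially the same route as the paper's proof: verify Assumption~\ref{as:gamma_definite} via a large multiple of $e_m$, verify Assumption~\ref{as:gamma_polyhedral} from the explicit description $\Gamma=\set{\gamma\geq 0:\ \gamma_m\geq -\lambda_{\min}(A_0)}$, observe that every semidefinite face lies in the hyperplane $\gamma_m=-\lambda_{\min}(A_0)$ so that $\mc V(\mc F)=\mc V_J$, and conclude via the inclusion $\mc F\subseteq\R_+^m$ using the same sequence $(h_j)$. Your handling of sub-faces of the facet is a touch more careful than the paper's statement that the semidefinite face equals the whole facet, but the argument is otherwise identical.
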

\begin{proof}
Consider a QCQP of the form \eqref{eq:locatelli} satisfying the assumptions of Theorem~\ref{thm:locatelli}. We will verify that the assumptions of Theorem~\ref{thm:sdp_tightness_perturbed} are also satisfied. Note the feasible region of \eqref{eq:qcqp} is nonempty. Furthermore, by taking $\eta\in\R$ large enough, we can ensure that $A(\eta e_m) = A_0 + \eta I\succ 0$. Thus, Assumption~\ref{as:gamma_definite} is satisfied. Assumption~\ref{as:gamma_polyhedral} is satisfied as well because 
\begin{align}
\label{eq:locatelli_gamma}
\Gamma = \set{\gamma\in\R^m:\, \begin{array}
	{l}
	A(\gamma)\succeq 0\\
	\gamma\geq 0
\end{array}} = \set{\gamma\in\R^m:\, \begin{array}
	{l}
	\gamma_m \geq -\lambda_{\min}(A_0)\\
	\gamma \geq 0
\end{array}}
\end{align}
is polyhedral.

Let $\mc F$ be a semidefinite face of $\Gamma$.
By Lemma~\ref{lem:semidefinite_shares_zero}, $A(\gamma)$ must have a zero eigenvalue for every $\gamma\in\mc F$. In particular, we can deduce from the description of $\Gamma$ in \eqref{eq:locatelli_gamma} that
\begin{align*}
\mc F = \set{\gamma\in\R^m:\, \begin{array}
	{l}
	\gamma_m = -\lambda_{\min}(A_0)\\
	\gamma \geq 0
\end{array}}.
\end{align*}
Therefore, $\mc V(\mc F) = \mc V_J$. Then the assumption $0\notin \Proj_{\mc V_J}\set{b(\gamma)+h_j:\, \gamma\in\R_+^m}$ for every $j\in\N$ immediately implies that
\begin{align*}
0\notin \Proj_{\mc V(\mc F)}\set{b(\gamma)+h_j:\, \gamma\in\mc F}
\end{align*}
for every $j\in\N$ as $\R_+^m \supseteq \mc F$. Hence, we conclude that the third condition in Theorem~\ref{thm:sdp_tightness_perturbed} also holds.
\end{proof}

\begin{remark}
\citet{hoNguyen2017second} study a particular convex relaxation of the TRS with additional conic constraints. For such problems, they suggest a particular assumption under which their relaxation is tight; see \cite[Theorem 2.4]{hoNguyen2017second}. It was also shown in \cite[Lemma 2.10]{hoNguyen2017second} that when the conic constraints are in a particular linear form, then their assumption is indeed an equivalent form of \citet{locatelli2016exactness}'s assumption from Theorem~\ref{thm:locatelli}. 
It is of interest to compare our assumptions with the one from \cite{hoNguyen2017second}. 
We note however that our Theorem~\ref{thm:sdp_tightness_perturbed} and the result due to \cite[Theorem 2.4]{hoNguyen2017second} are incomparable. To see this, note that the former covers some optimization problems with nonconvex quadratic constraints while the latter covers some optimization problems with non-quadratic conic constraints. In addition, we note that the relaxation studied in \citet{hoNguyen2017second} is weaker than the SDP relaxation that we study here.\mathprog{\qed}
\end{remark}

\citet{burer2019exact} consider the standard SDP relaxation of diagonal QCQPs\footnote{\citet{burer2019exact} address general QCQPs in their paper by first transforming them into diagonal QCQPs and then applying the standard SDP relaxation. In particular, the standard Shor SDP relaxation is only analyzed in the context of diagonal QCQPs.} and show that under an assumption on the input data $\set{A_i}_{i\in\intset{0,m}}$ and $\set{b_i}_{i\in\intset{0,m}}$ that the SDP relaxation is tight.
For the sake of completeness, we first restate\footnote{The original statement of this theorem gives additional guarantees, which are weaker than SDP tightness, when the conditions of Theorem~\ref{thm:burer_ye} fail.} \cite[Theorem 1]{burer2019exact} as it relates to SDP tightness in our language.

\begin{theorem}
[{\cite[Theorem 1]{burer2019exact}}]
\label{thm:burer_ye}
Consider a diagonal QCQP with no equality constraints.
Suppose the feasible region of \eqref{eq:qcqp} is nonempty and there exists $\gamma^*\geq 0$ such that $\breve A(\gamma^*)\succ 0$. Suppose the SDP relaxation \eqref{eq:shor_sdp} is strictly feasible. If 
for every $j\in\intset{N}$
 the set
\begin{align*}
\set{\gamma\in\R^m :\, \begin{array}
	{l}
	\gamma\geq 0\\
	A(\gamma)\succeq 0\\
	A(\gamma)_{j,j} = 0\\
	b(\gamma)_j = 0
\end{array}}
\end{align*}
is empty, 
then any optimizer $(x^*, t^*)$ in $\argmin_{(x,t)\in\mc D_\textup{SDP}} 2t$ satisfies $(x^*,t^*)\in\mc D$.
\end{theorem}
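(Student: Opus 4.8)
The plan is to show that the hypotheses of Theorem~\ref{thm:burer_ye} (i.e., of \cite[Theorem~1]{burer2019exact}) imply the hypotheses of Theorem~\ref{thm:sdp_tightness_main} for the diagonal QCQP at hand; the conclusion then follows verbatim. First I would verify Assumption~\ref{as:gamma_definite}: the feasible region is nonempty by hypothesis, and given $\gamma^*\geq 0$ with $\breve A(\gamma^*)\succ 0$, the vector $t\gamma^*$ satisfies $t\gamma^*\geq 0$ and $A(t\gamma^*)=A_0+t\breve A(\gamma^*)\succ 0$ for $t>0$ large enough, so Assumption~\ref{as:gamma_definite} holds (note $m_I=m$ here, since there are no equality constraints, so the nonnegativity requirement is $\gamma\geq 0$). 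Next, Assumption~\ref{as:gamma_polyhedral} holds because the QCQP is diagonal: writing $A_i=\Diag(a_i)$, we have $A(\gamma)\succeq 0$ iff $a_0+\sum_{i=1}^m\gamma_ia_i\geq 0$ componentwise, so $\Gamma=\set{\gamma:\, \gamma\geq 0,\ a_0+\sum_i\gamma_ia_i\geq 0}$ is cut out by finitely many linear inequalities (the diagonal special case of Lemma~\ref{lem:SD_implies_polyhedral}). The strict feasibility of the SDP relaxation is not needed in this derivation; it presumably enters Burer--Ye's original argument.

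The heart of the argument is translating Burer--Ye's per-coordinate emptiness conditions into the projection condition of Theorem~\ref{thm:sdp_tightness_main}. Fix a semidefinite face $\mc F$ of $\Gamma$ and let $Z(\mc F)\coloneqq\set{j\in\intset{N}:\, A(\gamma)_{j,j}=0\ \forall\gamma\in\mc F}$ be its set of shared zero coordinates. Since each $A(\gamma)$ is diagonal, one checks directly that $\mc V(\mc F)=\spann\set{e_j:\, j\in Z(\mc F)}$: a vector $v$ satisfies $A(\gamma)v=0$ for all $\gamma\in\mc F$ if and only if $v_j=0$ for every coordinate $j$ outside $Z(\mc F)$, i.e., every $j$ for which $A(\gamma)_{j,j}\neq 0$ for some $\gamma\in\mc F$. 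Consequently, for $\gamma\in\mc F$ we have $\Proj_{\mc V(\mc F)}b(\gamma)=0$ if and only if $b(\gamma)_j=0$ for all $j\in Z(\mc F)$, so $0\in\Proj_{\mc V(\mc F)}\set{b(\gamma):\, \gamma\in\mc F}$ if and only if there exists a single $\gamma\in\mc F$ with $b(\gamma)_j=0$ simultaneously for all $j\in Z(\mc F)$.

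Now suppose, for contradiction, that some semidefinite face $\mc F$ violates the hypothesis of Theorem~\ref{thm:sdp_tightness_main}, i.e., such a $\gamma\in\mc F$ exists. By Lemma~\ref{lem:semidefinite_shares_zero}, $\mc V(\mc F)\cap\mb S^{N-1}\neq\emptyset$, hence $Z(\mc F)\neq\emptyset$; pick any $j_0\in Z(\mc F)$. Then $\gamma\geq 0$ and $A(\gamma)\succeq 0$ (because $\gamma\in\mc F\subseteq\Gamma$), $A(\gamma)_{j_0,j_0}=0$ (because $j_0\in Z(\mc F)$), and $b(\gamma)_{j_0}=0$ (by the choice of $\gamma$), so $\gamma$ belongs to the set appearing in Theorem~\ref{thm:burer_ye} for the index $j_0$, contradicting the assumption that that set is empty. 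Hence $0\notin\Proj_{\mc V(\mc F)}\set{b(\gamma):\, \gamma\in\mc F}$ for every semidefinite face $\mc F$, and Theorem~\ref{thm:sdp_tightness_main} applies, giving $(x^*,t^*)\in\mc D$ for every optimizer $(x^*,t^*)$ of $\min_{(x,t)\in\mc D_\textup{SDP}}2t$. I expect the only real point of care to be the coordinate-wise identification of $\mc V(\mc F)$ together with the observation --- supplied by Lemma~\ref{lem:semidefinite_shares_zero} --- that a semidefinite face always has at least one shared zero coordinate $j_0$, which is exactly what lets one of Burer--Ye's finitely many per-coordinate conditions be invoked; the rest is routine bookkeeping.
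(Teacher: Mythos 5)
Your proposal is correct and follows essentially the same route the paper takes in Proposition~\ref{prop:BYconnection}: verify Assumptions~\ref{as:gamma_definite} and~\ref{as:gamma_polyhedral}, identify $\mc V(\mc F)$ coordinate-wise for the diagonal case, translate the per-coordinate emptiness conditions into the projection condition, and invoke Theorem~\ref{thm:sdp_tightness_main}. Your explicit use of Lemma~\ref{lem:semidefinite_shares_zero} to guarantee a shared zero coordinate $j_0$ is a slightly more careful rendering of a step the paper leaves implicit, but the argument is the same.
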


\begin{proposition}\label{prop:BYconnection}
Suppose the assumptions of Theorem~\ref{thm:burer_ye} hold, then the assumptions of Theorem~\ref{thm:sdp_tightness_main} also hold.
\end{proposition}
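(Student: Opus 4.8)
The plan is to verify, one at a time, the three hypotheses of Theorem~\ref{thm:sdp_tightness_main} for a diagonal QCQP with no equality constraints that satisfies the assumptions of Theorem~\ref{thm:burer_ye}: Assumption~\ref{as:gamma_definite}, Assumption~\ref{as:gamma_polyhedral}, and the condition that $0\notin\Proj_{\mc V(\mc F)}\set{b(\gamma):\,\gamma\in\mc F}$ for every semidefinite face $\mc F$ of $\Gamma$.

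The first two conditions are immediate. For Assumption~\ref{as:gamma_definite}: the feasible region is nonempty by hypothesis, and since $\breve A(\gamma^*)\succ 0$ for some $\gamma^*\geq 0$, linearity of $\gamma\mapsto\breve A(\gamma)$ gives $A(\eta\gamma^*)=A_0+\eta\breve A(\gamma^*)\succ 0$ for all sufficiently large $\eta>0$; as the QCQP has no equality constraints, $\eta\gamma^*$ is an admissible choice of $\gamma^*$ in Assumption~\ref{as:gamma_definite}. For Assumption~\ref{as:gamma_polyhedral}: a diagonal QCQP is simultaneously diagonalizable (take $U=I$), so $\Gamma$ is polyhedral by Lemma~\ref{lem:SD_implies_polyhedral}.

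The substance of the proof is translating the face condition of Theorem~\ref{thm:sdp_tightness_main} into the set-emptiness condition of Theorem~\ref{thm:burer_ye}. First I would record the shape of $\mc V(\mc F)$ in the diagonal setting: for a semidefinite face $\mc F$, every $A(\gamma)$ with $\gamma\in\mc F$ is diagonal and positive semidefinite, so its kernel is $\spann\set{e_j:\, A(\gamma)_{j,j}=0}$; hence $\mc V(\mc F)=\spann\set{e_j:\, j\in J}$ where $J\coloneqq\set{j\in\intset{N}:\, A(\gamma)_{j,j}=0\ \text{for all}\ \gamma\in\mc F}$, and $J\neq\emptyset$ by Lemma~\ref{lem:semidefinite_shares_zero}. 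Consequently $\Proj_{\mc V(\mc F)}b(\gamma)=\sum_{j\in J}b(\gamma)_j e_j$, so $0\in\Proj_{\mc V(\mc F)}\set{b(\gamma):\,\gamma\in\mc F}$ would produce some $\gamma\in\mc F$ with $b(\gamma)_j=0$ for all $j\in J$. Picking any $j_0\in J$, such a $\gamma$ would satisfy $\gamma\geq 0$ and $A(\gamma)\succeq 0$ (because $\gamma\in\mc F\subseteq\Gamma$), $A(\gamma)_{j_0,j_0}=0$ (because $j_0\in J$), and $b(\gamma)_{j_0}=0$ — placing $\gamma$ in the set that Theorem~\ref{thm:burer_ye} assumes is empty, a contradiction. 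This gives the third condition for every semidefinite face $\mc F$.

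The only step needing care, and the one I expect to be the main obstacle, is the identification of $\mc V(\mc F)$ with a coordinate subspace together with the appeal to Lemma~\ref{lem:semidefinite_shares_zero} to guarantee $J\neq\emptyset$; the rest is bookkeeping. It is also worth noting in passing that the strict-feasibility hypothesis on the SDP in Theorem~\ref{thm:burer_ye} is not needed in establishing any of the three conditions above.
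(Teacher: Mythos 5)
Your proposal is correct and follows essentially the same route as the paper: the same large-$\eta$ scaling of $\gamma^*$ for Assumption~\ref{as:gamma_definite}, polyhedrality of $\Gamma$ from diagonality, and the identification of $\mc V(\mc F)$ with the coordinate subspace $\spann\set{e_j:\, j\in J}$ to translate the emptiness condition of Theorem~\ref{thm:burer_ye} into $0\notin\Proj_{\mc V(\mc F)}\set{b(\gamma):\,\gamma\in\mc F}$. Your version is if anything slightly more careful than the paper's, in explicitly invoking Lemma~\ref{lem:semidefinite_shares_zero} to ensure $J\neq\emptyset$ and in noting that strict feasibility of the SDP is never used.
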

\begin{proof}
Consider a QCQP satisfying the assumptions of Theorem~\ref{thm:burer_ye}. We will verify that the assumptions of Theorem~\ref{thm:sdp_tightness_main} are also satisfied.
Note the feasible region of \eqref{eq:qcqp} is nonempty. Furthermore, by taking $\eta\in\R$ large enough, we can ensure $A(\eta\gamma^*) = A_0 + \eta \breve A(\gamma^*)\succ 0$. Thus, Assumption~\ref{as:gamma_definite} is satisfied.
Assumption~\ref{as:gamma_polyhedral} holds as all of the quadratic forms $A_0$, $\dots$, $A_m$ are diagonal.
The condition on the input data in Theorem~\ref{thm:burer_ye} is equivalent to requiring that
\begin{align*}
A(\gamma)_{j,j} = 0 \implies b(\gamma)_j \neq 0
\end{align*}
for all $\gamma\in\Gamma$ and $j\in\intset{N}$. Consider a semidefinite face $\mc F$ of $\Gamma$, and any $\gamma\in\mc F$. As $A(\gamma)$ is diagonal, we deduce  that
\begin{align*}
\mc V(\mc F) = \spann(\set{e_j :\, A(\gamma)_{j,j} =0}).
\end{align*}
Then, the final assumption in Theorem~\ref{thm:sdp_tightness_main}, namely
\begin{align*}
0\notin\Proj_{\mc V(\mc F)}\set{b(\gamma):\, \gamma\in\mc F},
\end{align*}
holds immediately.
\end{proof}

The following example shows that Theorem~\ref{thm:sdp_tightness_main} is strictly more general than Theorem~\ref{thm:burer_ye} even in the case of diagonal QCQPs with strictly convex constraints.

\begin{example}
\label{ex:separating_BY}
Consider the following QCQP
\begin{align*}
\min_{x\in\R^2}\set{-\norm{x}^2 :\, \begin{array}
	{l}
	\norm{x - e_1}^2 \leq 1\\
	\norm{x-e_2}^2 \leq 1
\end{array}}.
\end{align*}
We first verify that the assumptions of Theorem~\ref{thm:sdp_tightness_main} hold. It is clear that this problem satisfies Assumption~\ref{as:gamma_definite}: the origin is feasible and $A(e_1+e_2) = I\succ 0$. Next, we compute $\Gamma$.
\begin{align*}
\Gamma = \set{\gamma\in\R^2_+:\, A(\gamma)\succeq 0} &= \set{\gamma\in\R^2_+:\, \gamma_1+\gamma_2\geq 1}.
\end{align*}
We conclude that Assumption~\ref{as:gamma_polyhedral} also holds. Furthermore, the only semidefinite face of $\Gamma$ is $\mc F = \set{\gamma\in\R^2_+:\, \gamma_1+\gamma_2 = 1}$. For this semidefinite face, we have that $\mc V(\mc F)$ is the entire space $\R^2$. Consequently,
\begin{align*}
\Proj_{\mc V(\mc F)} \set{b(\gamma):\, \gamma\in\mc F} = \set{\gamma_1 e_1 + \gamma_2e_2 :\, \gamma\in\R^2_+,\, \gamma_1+\gamma_2 = 1}
\end{align*}
is the set of all convex combinations of $e_1$ and $e_2$. This set does not contain the origin and thus the assumptions of Theorem~\ref{thm:sdp_tightness_main} are satisfied.

On the other hand, by picking $j = 1$ in Theorem~\ref{thm:burer_ye} and $\gamma = e_2$, we have that $\gamma\geq 0$, $A(\gamma) \succeq 0$, and $A(\gamma)_{j,j} = 0$ but $b(\gamma)_j = (e_2)_1 = 0$. We see that the assumptions of Theorem~\ref{thm:burer_ye} are not satisfied.\mathprog{\qed}
\end{example} \section{Removing the polyhedrality assumption}
\label{sec:removing_polyhedral}

One of the main assumptions we use in our proof of the convex hull results (Theorems~\ref{thm:conv_hull_main}~and~\ref{thm:conv_hull_symmetries}) and the SDP tightness results (Theorems~\ref{thm:sdp_tightness_main}~and~\ref{thm:sdp_tightness_perturbed}) is that the set $\Gamma$ is polyhedral (Assumption~\ref{as:gamma_polyhedral}).
In this section we show that one can remove Assumption~\ref{as:gamma_polyhedral} in Theorem~\ref{thm:conv_hull_symmetries} when $k$ is sufficiently large\footnote{
Recall the example constructed in Proposition~\ref{prop:Gamma_sharp_example}. This example shows that both the convex hull result and SDP tightness result fail when Assumption~\ref{as:gamma_polyhedral} is dropped from Theorem~\ref{thm:conv_hull_symmetries}. In particular, the SDP tightness and convex hull results we recover in this section will require assumptions on $k$ that are strictly stronger than in the polyhedral case.
}.
The results in this section do not use the framework described in Section~\ref{sec:framework} and in particular do not require the technical assumption (Assumption~\ref{as:cF_well_defined}).

\begin{theorem}
\label{thm:general_Gamma_conv_hull}
Suppose Assumption~\ref{as:gamma_definite} holds. If the quadratic eigenvalue multiplicity $k$ satisfies $k\geq m+2$, then $\conv(\mc D) = \mc D_\textup{SDP}$.
\end{theorem}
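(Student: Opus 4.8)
The only nontrivial inclusion is $\mc D_\textup{SDP}\subseteq\conv(\mc D)$; the reverse inclusion was already recorded after~\eqref{eq:sdp_epi}. The plan is to prove it by induction on a rank-type potential, working directly with the Kronecker structure $A_i=I_k\otimes\cA_i$ and never invoking $\Gamma$ (consistent with this result lying outside the Section~\ref{sec:framework} framework). First I would recast the SDP in matrix variables: identify $\R^{N}=\R^{nk}$ with $\R^{n\by k}$ so that $x\leftrightarrow X$ and $b_i\leftrightarrow B_i$ exactly as in the quadratic matrix programming reformulation discussed in Section~\ref{sec:symmetries}, and write the SDP matrix as $\left(\begin{smallmatrix}1&x^\top\\ x&W\end{smallmatrix}\right)$ with $W\in\Se^{N}$ in $k\by k$ blocks of size $n\by n$. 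Setting $\bar W\coloneqq\sum_{j=1}^{k}W_{jj}$, the Kronecker structure gives $\ip{Q_i,\left(\begin{smallmatrix}1&x^\top\\ x&W\end{smallmatrix}\right)}=\ip{\cA_i,\bar W}+2\ip{B_i,X}+c_i$; moreover $\left(\begin{smallmatrix}1&x^\top\\ x&W\end{smallmatrix}\right)\succeq 0$ forces $\bar W\succeq XX^\top$, and conversely every $\bar W\succeq XX^\top$ is realized by placing the slack $\bar W-XX^\top$ in a single diagonal block. Hence $(\hat x,\hat t)\in\mc D_\textup{SDP}$ iff there is a \emph{witness} $\bar W\in\Se^{n}$ with $\bar W\succeq\hat X\hat X^\top$ for which $\ip{\cA_i,\bar W}+2\ip{B_i,\hat X}+c_i$ obeys the $i$-th constraint relation ($\le 0$ for $i\in\intset{m_I}$, $=0$ for $i\in\intset{m_I+1,m}$) and $\ip{\cA_0,\bar W}+2\ip{B_0,\hat X}+c_0\le 2\hat t$. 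The potential attached to a witness is $r\coloneqq\rank(\bar W-\hat X\hat X^\top)\le n$; if $r=0$ then $\bar W=\hat X\hat X^\top$ and $(\hat x,\hat t)\in\mc D$.

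The core step is: if $(\hat x,\hat t)$ has a witness of potential $r\ge 1$, then $(\hat x,\hat t)=\tfrac12(x^{+},\hat t)+\tfrac12(x^{-},\hat t)$ where each $(x^{\pm},\hat t)\in\mc D_\textup{SDP}$ has a witness of potential $r-1$; iterating this at most $n$ times then writes $(\hat x,\hat t)$ as a finite convex combination of points of $\mc D$. Using a spectral decomposition, write $\bar W-\hat X\hat X^\top=uu^\top+\Delta'$ with $u\in\R^{n}$ and $\Delta'\succeq 0$ of rank $r-1$. I would then put $X^{\pm}\coloneqq\hat X\pm u\beta^\top$ and $\bar W^{\pm}\coloneqq\bar W\pm\big((\hat X\beta)u^\top+u(\hat X\beta)^\top\big)$ for a unit vector $\beta\in\R^{k}$ to be chosen. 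A short computation gives $\bar W^{\pm}-X^{\pm}(X^{\pm})^\top=\Delta'$, whence $\bar W^{\pm}=\Delta'+X^{\pm}(X^{\pm})^\top\succeq 0$ and the potential drops to $r-1$; and $\ip{\cA_i,\bar W^{\pm}}+2\ip{B_i,X^{\pm}}+c_i=\big(\ip{\cA_i,\bar W}+2\ip{B_i,\hat X}+c_i\big)\pm 2\ip{(\cA_i\hat X+B_i)^\top u,\beta}$, so both $\bar W^{+}$ and $\bar W^{-}$ remain valid witnesses (with the same $\hat t$) as soon as $\beta\perp(\cA_i\hat X+B_i)^\top u$ for every $i\in\intset{0,m}$. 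These are only $m+1$ linear conditions on $\beta\in\R^{k}$, so under the hypothesis $k\ge m+2$ their common solution space has positive dimension and a unit $\beta$ exists. Since $\tfrac12 X^{+}+\tfrac12 X^{-}=\hat X$, this completes the inductive step.

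I expect the main obstacle to be discovering precisely this step: choosing $\rank(\bar W-\hat X\hat X^\top)$ as the potential, using the coupled moves $X\mapsto\hat X\pm u\beta^\top$ and $\bar W\mapsto\bar W^{\pm}$ engineered so that the potential drops by exactly one while positive semidefiniteness comes for free, and noticing that feasibility survives precisely when $\beta$ annihilates the $m+1$ ``gradient'' vectors $(\cA_i\hat X+B_i)^\top u$ --- which is exactly what the bound $k\ge m+2$ buys. The remaining ingredients are routine: the passage to matrix variables, the witness description of $\mc D_\textup{SDP}$, and the trivial inclusion $\conv(\mc D)\subseteq\mc D_\textup{SDP}$. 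As a sanity check I would run the argument against Propositions~\ref{prop:aff_dim_sharp_example} and~\ref{prop:Gamma_sharp_example}, where $k<m+2$ and no suitable $\beta$ need exist, to confirm the ``$+2$'' is essential; and I would note that, in contrast to the polyhedral results, this proof uses neither Assumption~\ref{as:cF_well_defined} nor Assumption~\ref{as:gamma_polyhedral}.
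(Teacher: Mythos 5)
Your proof is correct, and its core is the same induction the paper uses: your witness $\Delta=\bar W-\hat X\hat X^\top$ is exactly the paper's matrix $Z\in\S^n$, your rank-one peeling $\Delta=uu^\top+\Delta'$ with the coupled move $X^\pm=\hat X\pm u\beta^\top$ is, under the vectorization dictionary of Section~\ref{sec:symmetries}, precisely the paper's update $\hat x\pm y\otimes z$ (with $u\leftrightarrow z$, $\beta\leftrightarrow y$), and your orthogonality conditions $\beta\perp(\cA_i\hat X+B_i)^\top u$ for $i\in\intset{0,m}$ are identical to the paper's system $\ip{A_i\hat x+b_i,\,y\otimes z}=0$, solved by the same counting argument ($m+1$ homogeneous equations in $k\geq m+2$ unknowns). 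Where you genuinely diverge is in how the PSD witness is produced: the paper passes through Lemma~\ref{lemma:sdp_in_terms_of_Gamma} and strong conic duality (this is where Assumption~\ref{as:gamma_definite} enters) to extract $Z$ from the dual over $\set{\gamma:\cA(\gamma)\succeq 0}$, whereas you extract it directly from a primal feasible moment matrix by the partial trace $\bar W=\sum_j W_{jj}$, noting that this map is positive (so $W\succeq xx^\top$ gives $\bar W\succeq XX^\top$) and that the slack can be re-embedded into a single diagonal block for the converse. Your route is more elementary and has the side benefit of not invoking Assumption~\ref{as:gamma_definite} at all, so it in fact establishes the conclusion without that hypothesis; the paper's route, by contrast, reuses machinery (the dual object $\Gamma$ and Lemma~\ref{lemma:sdp_in_terms_of_Gamma}) already developed for the rest of the paper. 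Both arguments then terminate identically: at rank zero the witness equals $\hat X\hat X^\top$, the point lies in $\mc D$, and the midpoint decomposition completes the induction.
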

\begin{proof}
Suppose $(\hat x,\hat t)\in\mc D_\textup{SDP}$.
Therefore,
\begin{align*}
2\hat t &\geq \sup_{\gamma\in\R^m} \set{q(\gamma,\hat x) :\, \begin{array}
	{l}
	A(\gamma)\succeq 0\\
	\gamma_i \geq 0 ,\,\forall i\in\intset{m_I}
\end{array}}\\
&= \sup_{\gamma\in\R^m} \set{q(\gamma,\hat x) :\, \begin{array}
	{l}
	\cA(\gamma)\succeq 0\\
	\gamma_i \geq 0 ,\,\forall i\in\intset{m_I}
\end{array}}.
\end{align*}
The second line follows as $A(\gamma)\succeq 0$ if and only if $\cA(\gamma)\succeq 0$.
Note that Assumption~\ref{as:gamma_definite} allows us to apply strong conic duality to the program on the second line. Furthermore, this dual SDP achieves its optimal value, i.e., there exists $Z\in\S^n$ such that $(\hat x,\hat t, Z)$ satisfies
\begin{align}
\label{eq:hatx_hatt_Z_system_conv}
\begin{cases}
q_0(\hat x) + \ip{\cA_0, Z} \leq 2\hat t\\
q_i(\hat x) + \ip{\cA_i, Z} \leq 0,\,\forall i\in\intset{m_I}\\
q_i(\hat x) + \ip{\cA_i, Z} =0,\,\forall i\in\intset{m_I+1,m}\\
Z\succeq 0.
\end{cases}
\end{align}
We will show by induction on $\rank(Z)$ that for any $(\hat x,\hat t, Z)$ satisfying \eqref{eq:hatx_hatt_Z_system_conv}, we have $(\hat x,\hat t)\in\conv(\cD)$. 
The claim clearly holds when $\rank(Z) = 0$.

Now suppose $r\coloneqq \rank(Z)\geq 1$.
Let $(\hat x,\hat t,Z)$ satisfy \eqref{eq:hatx_hatt_Z_system_conv}. Write $Z = \sum_{i=1}^{r}z_iz_i^\top$ where each $z_i$ is nonzero. Fix $z\coloneqq z_1$.

We claim that the following system in $y$ is feasible:
\begin{align}
\label{eq:removing_polyhedral_system_conv}
\begin{cases}
	\ip{A_0\hat x +b_0, y\otimes z} = 0\\
	\ip{A_i\hat x+b_i, y\otimes z} = 0 ,\,\forall i\in\intset{m}\\
	y\in\mb S^{k-1}.
\end{cases}
\end{align}
Indeed, the first two constraints impose at most $m+1$ homogeneous linear equalities in $k\geq m+2$ variables. In particular, there exists a nonzero solution $y$ to the first two constraints. This $y$ may then be scaled to satisfy $y\in\mb S^{k-1}$.

Note then that for all $i \in\intset{0,m}$,
\begin{align*}
q_i(\hat x \pm y\otimes z) &= (\hat x \pm y\otimes z)^\top  A_i (\hat x \pm y\otimes z) + 2 b_i^\top (\hat x \pm y\otimes z) + c_i\\
&= q_i(\hat x) \pm 2\ip{A_i \hat x + b_i, y\otimes z} + \ip{\cA_i, zz^\top}\\
&= q_i(\hat x) + \ip{\cA_i, zz^\top}.
\end{align*}

Consequently, $(\hat x\pm y\otimes z, \hat t, Z - zz^\top)$ satisfies \eqref{eq:hatx_hatt_Z_system_conv}.
Furthermore, we have $\rank(Z-zz^\top)= r-1$. By induction, $(\hat x\pm y\otimes z,\hat t)\in\conv(\cD)$. 
We conclude that $(\hat x,\hat t)\in\conv(\cD)$.
\end{proof}

A similar proof leads to an SDP tightness result without Assumption~\ref{as:gamma_polyhedral}.
\begin{restatable}
	{theorem}{thmgeneralgammatightness}
	\label{thm:general_Gamma_sdp_tightness}
	Suppose Assumption~\ref{as:gamma_definite} holds.
	Define the hyperplane $H = \set{(x,t)\in\R^{N+1}:\, 2t = \Opt_\textup{SDP}}$.
	If the quadratic eigenvalue multiplicity $k$ satisfies $k \geq m+1$, then $\conv(\cD \cap H) = \cD_\textup{SDP} \cap H$. In particular, $\Opt = \Opt_\textup{SDP}$.
\end{restatable}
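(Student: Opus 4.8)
The plan is to run the same rank-induction as in the proof of Theorem~\ref{thm:general_Gamma_conv_hull}, but to exploit the restriction to $H$ in two places: it lets us drop the $i=0$ linear equation in the auxiliary system (saving exactly one constraint, hence the improvement from $k\ge m+2$ to $k\ge m+1$), at the cost of an extra dichotomy argument. First I would note the easy inclusion: $\cD\cap H\subseteq\cD_\textup{SDP}\cap H$ and $\cD_\textup{SDP}\cap H$ is convex, so $\conv(\cD\cap H)\subseteq\cD_\textup{SDP}\cap H$. Next, as in Theorem~\ref{thm:general_Gamma_conv_hull}, rewriting $A(\gamma)\succeq 0$ as $\cA(\gamma)\succeq 0$ and invoking strong conic duality (valid by Assumption~\ref{as:gamma_definite}, which also gives primal attainment), any $(\hat x,\hat t)\in\cD_\textup{SDP}\cap H$ admits $Z\succeq 0$ with $(\hat x,\hat t,Z)$ satisfying~\eqref{eq:hatx_hatt_Z_system_conv}. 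Moreover $2\hat t=\Opt_\textup{SDP}=\min_x\sup_{\gamma\in\Gamma}q(\gamma,x)$ by Lemma~\ref{lemma:sdp_in_terms_of_Gamma}; since $Z$ is dual-feasible, $q_0(\hat x)+\ip{\cA_0,Z}\ge$ (dual optimum) $=\sup_{\gamma\in\Gamma}q(\gamma,\hat x)\ge\Opt_\textup{SDP}=2\hat t\ge q_0(\hat x)+\ip{\cA_0,Z}$, forcing the first inequality of~\eqref{eq:hatx_hatt_Z_system_conv} to hold with equality. I would also record that $\Opt_\textup{SDP}$ is attained (for $\gamma^*$ as in Assumption~\ref{as:gamma_definite}, $x\mapsto\sup_{\gamma}q(\gamma,x)\ge q(\gamma^*,x)$ is coercive and lower semicontinuous), so $\cD_\textup{SDP}\cap H\ne\emptyset$.

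The heart of the argument is induction on $r\coloneqq\rank(Z)$: any triple $(\hat x,\hat t,Z)$ satisfying~\eqref{eq:hatx_hatt_Z_system_conv} with $2\hat t=\Opt_\textup{SDP}$ has $(\hat x,\hat t)\in\conv(\cD\cap H)$, and by the duality computation above every such triple automatically has $q_0(\hat x)+\ip{\cA_0,Z}=2\hat t$. When $r=0$, $\hat x$ is QCQP-feasible and $q_0(\hat x)\le 2\hat t=\Opt_\textup{SDP}\le\Opt\le q_0(\hat x)$, so $(\hat x,\hat t)\in\cD\cap H$. For $r\ge 1$, write $Z=\sum_{i=1}^r z_iz_i^\top$, put $z\coloneqq z_1$, and solve the homogeneous system $\ip{A_i\hat x+b_i,\,y\otimes z}=0$ for $i\in\intset{m}$ with $y\in\mb S^{k-1}$; this has a nonzero solution because it is only $m$ equalities in $k\ge m+1$ unknowns. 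Set $c\coloneqq\ip{A_0\hat x+b_0,\,y\otimes z}$. Using $A_i=I_k\otimes\cA_i$ and $\norm y=1$, for every $\alpha\in\R$ and every $i\in\intset{0,m}$ one gets $q_i(\hat x+\alpha(y\otimes z))+\ip{\cA_i,\,Z-\alpha^2 zz^\top}=q_i(\hat x)+\ip{\cA_i,Z}+2\alpha\ip{A_i\hat x+b_i,\,y\otimes z}$, which equals $q_i(\hat x)+\ip{\cA_i,Z}$ for $i\ge 1$ and $2\hat t+2\alpha c$ for $i=0$; also $Z-\alpha^2 zz^\top=(1-\alpha^2)z_1z_1^\top+\sum_{i\ge 2}z_iz_i^\top\succeq 0$ when $\abs\alpha\le 1$, with rank at most $r-1$ at $\alpha=\pm 1$.

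Now the dichotomy. If $c\ne 0$, assume without loss of generality $c<0$ (replace $y$ by $-y$, still a sphere point solving the homogeneous system). Taking $\alpha=1$, the triple $(\hat x+y\otimes z,\ \hat t+c,\ Z-zz^\top)$ satisfies~\eqref{eq:hatx_hatt_Z_system_conv}, so (running the duality equivalence backwards) $(\hat x+y\otimes z,\ \hat t+c)\in\cD_\textup{SDP}$ with objective value $2\hat t+2c<2\hat t=\Opt_\textup{SDP}$ — contradicting optimality. Hence $c=0$, and then both triples $(\hat x\pm y\otimes z,\ \hat t,\ Z-zz^\top)$ satisfy~\eqref{eq:hatx_hatt_Z_system_conv} with unchanged $\hat t$ and strictly smaller rank, so the inductive hypothesis gives $(\hat x\pm y\otimes z,\hat t)\in\conv(\cD\cap H)$; averaging yields $(\hat x,\hat t)\in\conv(\cD\cap H)$. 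This proves $\conv(\cD\cap H)=\cD_\textup{SDP}\cap H$, and picking any $(x',t')$ in the nonempty right-hand side gives $\Opt\le q_0(x')\le 2t'=\Opt_\textup{SDP}$, hence $\Opt=\Opt_\textup{SDP}$. The step I expect to be the crux is the $c\ne 0$ case: the whole gain of one dimension rests on recognizing that a nonzero $c$ would let us strictly improve the SDP objective, so it simply cannot occur at an optimal point — everything else is a routine adaptation of Theorem~\ref{thm:general_Gamma_conv_hull}.
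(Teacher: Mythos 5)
Your proposal is correct and is essentially the paper's own proof: the same strong-duality step producing $Z$ with the $i=0$ constraint tight, the same rank-one peeling induction with the homogeneous system $\ip{A_i\hat x+b_i,\,y\otimes z}=0$, $i\in\intset{m}$, in $k\ge m+1$ unknowns, and the same use of SDP-optimality to force $\ip{A_0\hat x+b_0,\,y\otimes z}=0$ (the paper phrases your ``$c\neq 0$'' dichotomy as ``by minimality of $\hat t$ in $\cD_\textup{SDP}$''). The only cosmetic slip is in the last sentence: a point of $\cD_\textup{SDP}\cap H$ need not itself be QCQP-feasible, so you should instead note that nonemptiness of $\conv(\cD\cap H)$ forces $\cD\cap H\neq\emptyset$ and pick $(x',t')$ there, which immediately gives $\Opt\le q_0(x')\le 2t'=\Opt_\textup{SDP}\le\Opt$.
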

The proof of this statement follows the proof of Theorem~\ref{thm:general_Gamma_conv_hull} almost exactly and is deferred to Appendix~\ref{ap:proof_thm_general_Gamma_tightness}. For now, we will simply sketch how to modify the proof of Theorem~\ref{thm:general_Gamma_conv_hull} to get a proof for Theorem~\ref{thm:general_Gamma_sdp_tightness}:
We will only consider points $(\hat x, \hat t)\in\cD_\textup{SDP}\cap H$. In this situation, it is easy to show that the first two constraints in \eqref{eq:removing_polyhedral_system_conv} are dependent and impose at most $m$ homogeneous linear equalities. Thus we may carry out the procedure in the proof of Theorem~\ref{thm:general_Gamma_conv_hull} as long as $k\geq m+1$. 
At the end of the procedure, we will have decomposed $(\hat x,\hat t)$ as a convex combination of points $(x_\alpha, \hat t)\in\cD$.

\begin{remark}
\citet[Corollary 4.4]{beck2007quadratic} shows that under Assumption~\ref{as:gamma_definite}, the conclusion $\Opt=\Opt_\textup{SDP}$ holds even when $k = m$.
Thus, recalling the definition of $H$ from Theorem~\ref{thm:general_Gamma_sdp_tightness}, we can summarize Theorems~\ref{thm:general_Gamma_conv_hull}~and~\ref{thm:general_Gamma_sdp_tightness} and~\cite[Corollary 4.4]{beck2007quadratic} as follows. Under Assumption~\ref{as:gamma_definite}, we have:
\begin{center}
\begin{tabular}{@{}lll@{}}\toprule
Assumption & Result & Reference\\\midrule
$k \geq m+2$ & $\conv(\cD) = \cD_\textup{SDP}$ &\quad Theorem~\ref{thm:general_Gamma_conv_hull}\\
$k \geq m+1$ & $\conv(\cD\cap H) = \cD_\textup{SDP}\cap H$ &\quad Theorem~\ref{thm:general_Gamma_sdp_tightness}\\
$k \geq m$ & $\cD \cap H \neq \emptyset$ &\quad \cite[Corollary 4.4]{beck2007quadratic}\\\midrule
\end{tabular}
\end{center}
We conjecture, but are unable to prove at the moment, that the values required of $k$ for these three results are sharp.\mathprog{\qed}
\end{remark} 
\section*{Acknowledgments}
This research is supported in part by NSF grant CMMI 1454548 and ONR grant N00014-19-1-2321.
The authors wish to thank the review team for their feedback and suggestions that led to an improved presentation of the material. 

{
\bibliographystyle{plainnat}

}

\begin{appendix}
\section{Proof of Proposition~\ref{prop:SOCP_SDP_equivalence}}
\label{ap:proof_prop_socp_sdp}

\socpsdpequivalence*
\begin{proof}
The second identity follows immediately from the first identity, thus it suffices to prove only the former.

Let $(x, t)\in\mc D_\textup{SDP}$. By definition, there exists $X\in\Se^N$ such that the following system is satisfied
\begin{align*}
\begin{cases}
	Y\coloneqq \begin{pmatrix}
		1 & x^\top\\ x & X
	\end{pmatrix}\\
	\ip{Q_0, Y} \leq 2t\\
	\ip{Q_i, Y} \leq 0 ,\,\forall i\in\intset{m_I}\\
	\ip{Q_i,Y} = 0 ,\,\forall i\in\intset{m_I+1,m}\\
	Y\succeq 0.
\end{cases}
\end{align*}
Taking a Schur complement of $1$ in the matrix $Y$, we see that $X\succeq xx^\top$. In particular, we have that $X_{j,j} \geq x_j^2$ for all $j\in\intset{N}$. Define the vector $y$ by $y_j = X_{j,j}\geq x_j^2$. Then, noting that $\ip{\Diag(a_i), X} = \ip{a_i, y}$ for all $i\in\intset{0,m}$, we conclude that $(x,t)\in\mc D_\textup{SOCP}$.

Let $(x, t)\in\mc D_\textup{SOCP}$. By definition, there exists $y\in \R^N$ such that the following system is satisfied
\begin{align*}
\begin{cases}
	\ip{a_0, y} + 2\ip{b_0, x} + c_0 \leq 2t\\
	\ip{a_i, y} + 2\ip{b_i, x} + c_i \leq 0 ,\,\forall i\in\intset{m_I}\\
	\ip{a_i, y} + 2\ip{b_i, x} + c_i  = 0 ,\,\forall i\in\intset{m_I+1,m}\\
	y_j\geq x_j^2,\,\forall j\in\intset{N}.
\end{cases}
\end{align*}
Define $X\in\Se^N$ such that $X_{j,j} = y_j$ for all $j\in\intset{N}$ and $X_{j,k} = x_jx_k$ for $j\neq k$. From the definition of $\mc D_\textup{SOCP}$, the relation $y_j \geq x_j^2$ holds for all $j\in\intset{N}$, therefore
\begin{align*}
\begin{pmatrix}
	1 & x^\top \\ x & X
\end{pmatrix}\succeq
\begin{pmatrix}
	1 & x^\top \\
	x & xx^\top 
\end{pmatrix}\succeq 0.
\end{align*}
Finally, noting that $\ip{\Diag(a_i), X} = \ip{a_i, y}$ for all $i\in\intset{0,m}$, we conclude that $(x,t)\in\mc D_\textup{SDP}$.
\end{proof} \section{Proof of Theorem~\ref{thm:general_Gamma_sdp_tightness}}
\label{ap:proof_thm_general_Gamma_tightness}

\thmgeneralgammatightness*
\begin{proof}
Suppose $(\hat x,\hat t)\in\mc D_\textup{SDP}\cap H$.
Then by Lemma~\ref{lemma:sdp_in_terms_of_Gamma} and optimality of $\hat t$, we have that $2\hat t = \sup_{\gamma\in\Gamma}q(\gamma,\hat x)$, i.e.,
\begin{align*}
2\hat t &= \sup_{\gamma\in\R^m} \set{q(\gamma,\hat x) :\, \begin{array}
	{l}
	A(\gamma)\succeq 0\\
	\gamma_i \geq 0 ,\,\forall i\in\intset{m_I}
\end{array}}\\
&= \sup_{\gamma\in\R^m} \set{q(\gamma,\hat x) :\, \begin{array}
	{l}
	\cA(\gamma)\succeq 0\\
	\gamma_i \geq 0 ,\,\forall i\in\intset{m_I}
\end{array}}.
\end{align*}
The second line follows as $A(\gamma)\succeq 0$ if and only if $\cA(\gamma)\succeq 0$.
Note that Assumption~\ref{as:gamma_definite} allows us to apply strong conic duality to the program on the second line. Furthermore, this dual SDP achieves its optimal value, i.e., there exists $Z\in\S^n$ such that $(\hat x,\hat t, Z)$ satisfies
\begin{align}
\label{eq:hatx_hatt_Z_system_obj}
\begin{cases}
q_0(\hat x) + \ip{\cA_0, Z} = 2\hat t\\
q_i(\hat x) + \ip{\cA_i, Z} \leq 0,\,\forall i\in\intset{m_I}\\
q_i(\hat x) + \ip{\cA_i, Z} =0,\,\forall i\in\intset{m_I+1,m}\\
Z\succeq 0.
\end{cases}
\end{align}

We will show by induction on $\rank(Z)$ that for any $(\hat x,\hat t, Z)$ satisfying \eqref{eq:hatx_hatt_Z_system_obj}, we have $(\hat x,\hat t)\in\conv(\cD\cap H)$. 
The claim clearly holds when $\rank(Z) = 0$.

Now suppose $r\coloneqq \rank(Z)\geq 1$.
Let $(\hat x,\hat t,Z)$ satisfy \eqref{eq:hatx_hatt_Z_system_obj}. Write $Z = \sum_{i=1}^{r}z_iz_i^\top$ where each $z_i$ is nonzero. Fix $z\coloneqq z_1$.

We claim that the following system in $y$ is feasible:
\begin{align}
\label{eq:removing_polyhedral_system_obj}
\begin{cases}
	\ip{A_i\hat x+b_i, y\otimes z} = 0 ,\,\forall i\in\intset{m}\\
	y\in\mb S^{k-1}.
\end{cases}
\end{align}
Indeed, the linear constraints impose at most $m$ homogeneous linear equalities in $k\geq m+1$ variables. In particular, there exists a nonzero solution $y$ to the linear constraints. This $y$ may then be scaled to satisfy $y\in\mb S^{k-1}$.

Note then that for all $i \in\intset{1,m}$,
\begin{align*}
q_i(\hat x \pm y\otimes z) + \ip{\cA_i, Z - zz^\top}&= (\hat x \pm y\otimes z)^\top  A_i (\hat x \pm y\otimes z) + 2 b_i^\top (\hat x \pm y\otimes z) + c_i + \ip{\cA_i, Z - zz^\top}\\
&= q_i(\hat x) \pm 2\ip{A_i \hat x + b_i, y\otimes z} + \ip{\cA_i, Z}\\
&= q_i(\hat x) + \ip{\cA_i, Z}.
\end{align*}
Consequently, $(\hat x \pm y \otimes z, \hat t, Z - zz^\top)$ satisfies all of the constraints in \eqref{eq:hatx_hatt_Z_system_obj} except possibly the first.
We now verify that the first constraint is also satisfied: From
\begin{align*}
q_0(\hat x \pm y \otimes z) + \ip{\cA_0, Z - zz^\top} &=  q_0(\hat x) \pm 2\ip{A_0 \hat x + b_0, y\otimes z} + \ip{\cA_0, zz^\top} + \ip{\cA_0, Z - zz^\top}\\
&= q_0(\hat x) + \ip{\cA_0, Z} \pm 2\ip{A_0 \hat x + b_0, y\otimes z}\\
&= 2\hat t \pm 2\ip{A_0 \hat x + b_0, y\otimes z},
\end{align*}
we deduce that $(\hat x\pm y \otimes z, 2\hat t \pm 2\ip{A_0 \hat x + b_0, y\otimes z})\in\cD_\textup{SDP}$. Then, by minimality of $\hat t$ in $\cD_\textup{SDP}$, we infer that $\ip{A_0\hat x + b_0, y\otimes z} = 0$.

We deduce that $(\hat x\pm y\otimes z, \hat t, Z - zz^\top)$ satisfies \eqref{eq:hatx_hatt_Z_system_obj}.
Furthermore, we have $\rank(Z-zz^\top)= r-1$. By induction, $(\hat x\pm y\otimes z,\hat t)\in\conv(\cD\cap H)$. 
We conclude that $(\hat x,\hat t)\in\conv(\cD\cap H)$.
\end{proof} \end{appendix}

\end{document}